\documentclass[aos,noinfoline]{imsart}

\RequirePackage{amsthm,amsmath,amsfonts,amssymb}
\RequirePackage[authoryear]{natbib}
\RequirePackage{bm}
\RequirePackage{booktabs,longtable,multirow,array}
\RequirePackage{graphicx}
\RequirePackage{capt-of}
\RequirePackage{xcolor,subcaption}
\RequirePackage[
  colorlinks,
  citecolor=blue,
  urlcolor=blue,
  linkcolor=blue
]{hyperref}
\InputIfFileExists{supp_xrefs.tex}{}{%
  \PackageWarning{arxivxref}{Missing static supplementary cross-reference map}%
}

\makeatletter
\setpkgattr{copyright}{text}{}
\def\journal@name{}
\def\journal@url{}
\AtBeginDocument{\let\info@line\@empty}
\makeatother

\graphicspath{{./images/}{./}}

\startlocaldefs
\newcommand{\V}{\operatorname{V}}
\newcommand{\E}{\mathbb{E}}
\newcommand{\Var}{\operatorname{Var}}
\newcommand{\KL}{\operatorname{KL}}
\newcommand{\argmax}{\operatorname*{arg\,max}}

\newcommand{\phiN}{\phi}
\newcommand{\dH}{d_{\mathrm H}}

\newcommand{\post}{\mathrm{post}}

\newcommand{\Pn}{\mathbb{P}_n}
\newcommand{\Pzero}{\mathbb{P}_0}

\newcommand{\R}{\mathbb{R}}
\newcommand{\paperfigure}[2]{%
  \IfFileExists{#1}{\includegraphics[width=#2]{#1}}{%
    \fbox{\parbox[c][0.22\textheight][c]{#2}{\centering
      Figure file not supplied:\\[4pt]\texttt{\detokenize{#1}}}}}}
\providecommand{\email}[1]{\texttt{#1}}

\theoremstyle{plain}
\newtheorem{theorem}{Theorem}
\newtheorem{proposition}{Proposition}
\newtheorem{lemma}{Lemma}
\newtheorem{corollary}{Corollary}

\theoremstyle{definition}
\newtheorem{assumption}{Assumption}
\newtheorem{remark}{Remark}
\endlocaldefs

\begin{document}

\begin{frontmatter}

\title{Regularized-Likelihood Deconvolution with Posterior-Density Reconstruction}
\runtitle{Regularized-Likelihood Deconvolution}
\begin{aug}
\author[A]{\fnms{Marco}~\snm{Di Marzio}\ead[label=e1]{marco.dimarzio@unich.it}\orcid{0000-0003-3500-1890}}
\author[A]{\fnms{Stefania}~\snm{Fensore}\ead[label=e2]{stefania.fensore@unich.it}}
\author[B]{\fnms{Chiara}~\snm{Passamonti}\ead[label=e3]{chiara.passamonti@uni.lu}}
\author[A]{\fnms{Serena}~\snm{Pulcini}\ead[label=e4]{serena.pulcini@phd.unich.it}}

\address[A]{Department of Socio-Economic, Management and Statistical Studies,
University of Chieti-Pescara
\printead[presep={ ,\ }]{e1,e2,e4}}

\address[B]{Department of Mathematics,
University of Luxembourg
\printead[presep={,\ }]{e3}}
\end{aug}

\begin{abstract}
Density deconvolution is an ill-posed inverse problem because recovering the latent density amplifies high-frequency variation in the observed data. We propose a two-stage likelihood procedure. The first estimator maximizes the convolution likelihood over a regularized Gaussian-mixture sieve; the second treats this fit as an empirical prior and averages the fitted conditional latent densities. Under fixed-model misspecification, the estimators generally have different population targets. For bounded local multiplicative perturbations of the latent density, the population posterior map is locally contractive in chi-square divergence under identifiable convolution. Under correct finite-dimensional specification, however, reconstruction adds a nonnegative first-order variance component. For growing sieves, we separate observed-domain estimation, inverse stability, and posterior empirical variation. We derive explicit rates under Gaussian error and recover the classical ordinary-smooth exponent, up to logarithmic factors, with a constructive verification for Laplace error. Simulations and a Framingham blood-pressure application provide empirical performance evidences.
\end{abstract}

\begin{keyword}
\kwd{Gaussian mixtures}
\kwd{Inverse problems}
\kwd{Latent-variable models}
\kwd{Measurement error}
\kwd{Sieve estimation}
\end{keyword}

\end{frontmatter}

\section{Introduction}

In many statistical applications, the variable of interest is observed through a contaminated measurement. We consider the additive measurement-error model
\begin{equation}
Y_i=X_i+\varepsilon_i,\qquad i=1,\ldots,n,
\label{eq:additive-model}
\end{equation}
where \((X_i,\varepsilon_i)_{i=1}^n\) are independent copies of \((X,\varepsilon)\), and \(X_i\) is independent of \(\varepsilon_i\). The latent variable and the measurement error have densities \(f_X\) and \(f_\varepsilon\), respectively. The error density is assumed known, and only \(Y_1,\ldots,Y_n\) are observed. Throughout, the \(X\)-scale is called the latent domain and the \(Y\)-scale the observed domain.
 The objective is to estimate \(f_X\), while the observed density is the convolution \(f_Y(y)=\int f_X(x)f_\varepsilon(y-x)\,dx\). 
 Equivalently, in the Fourier domain, \(\varphi_Y(t)=\varphi_X(t)\varphi_\varepsilon(t)\). The classical kernel deconvolution estimator (see \cite{liutaylor1989,
stefanski1990}) has the form 
$\frac{1}{2\pi}\int e^{-itx}\widehat\varphi_Y(t) \frac{\varphi_W(ht)}{\varphi_\varepsilon(t)}\,dt,$    
where \(\varphi_\varepsilon \neq 0\) is typically known, \(\widehat\varphi_Y(t)=n^{-1}\sum_{j=1}^n e^{itY_j}\) is the empirical characteristic function,  \(\varphi_W\) is a damping factor depending on the kernel function \(W\) and a smoothing parameter \(h\). However, the variance of this estimator depends on the inverse factor
\(1/\varphi_\varepsilon(t)\). Thus, the stronger the decay of
\(|\varphi_\varepsilon(t)|\), the more severe the amplification of
high-frequency empirical noise \(\widehat\varphi_Y(t)\). 

Several alternatives avoid explicit Fourier inversion, including constrained quadratic programming, penalized contrasts, finite-basis likelihood approximations, and penalized likelihood methods \citep{yang2020,comte2006,guan2021,cai2025}. Related approaches estimate mixing densities by maximum penalized likelihood; for example, \citet{liu2009} use a functional EM algorithm for continuous mixing densities on a compact interval. Bayesian deconvolution instead stabilizes the problem through mixture priors and prior-induced regularity \citep{sarkar2014,sarkar2018}. These methods typically produce either a single regularized latent-density estimate or a posterior distribution over latent laws.

The Bayesian inverse-problem literature provides a broader perspective on regularization through Gaussian priors. In Gaussian white-noise models, \citet{knapik2011} study how posterior contraction depends on ill-posedness, signal regularity, and prior smoothness, while \citet{knapik2016} develop adaptive empirical- and hierarchical-Bayes procedures. Our setting differs because we estimate the latent density by regularized likelihood and apply a single posterior-averaging step rather than defining a full posterior on the unknown density.

We combine these perspectives through a two-stage likelihood framework producing two distinct latent-density estimators. The first maximizes the observed convolution likelihood over a regularized real-line mixture sieve and therefore provides a direct likelihood-regularized estimate. The second uses this fit as an empirical prior and averages the corresponding conditional latent densities over the observations, producing a posterior reconstruction that need not remain in the original finite-mixture sieve. This reconstruction can be interpreted as a data-driven correction of the first-stage latent estimate: discrepancies between the fitted and true observed distributions are reflected in the conditional latent densities and transported back to the latent scale through posterior averaging. This mechanism can reduce approximation bias when the first-stage sieve is too restrictive, but, because the same observations also enter the empirical average, it introduces an additional source of sampling variability. These two effects motivate the bias--variance comparison developed below.
The posterior reconstruction can be viewed as a deconvolution-specific instance of the posterior-averaging update studied by \citet{chae2018} in a general mixture-estimation framework, where repeated updates are shown to increase the likelihood and converge toward the nonparametric maximum-likelihood estimator as the number of iterations increases, for a fixed sample. By contrast, in our setting, the same map enters the statistical analysis of an ill-posed deconvolution problem: its input is a random deconvolution estimator obtained by maximizing the convolution likelihood over a regularized and data-selected mixture sieve. We study the resulting estimator as \(n\) increases, characterizing its population target and sampling behavior under both misspecification and correct specification, and develop a growing-sieve theory in which the number of mixture components and the admissible parameter space may vary with the sample size.

The likelihood-based deconvolution methods most closely related to our first-stage estimator are those of \citet{guan2021} and \citet{cai2025}. \citet{guan2021} represents the latent density on a bounded interval using Bernstein polynomials with data-driven degree selection, whereas \citet{cai2025} maximizes a roughness-penalized likelihood over an infinite-dimensional function space. The penalty in \citet{cai2025} is based on the integrated squared second derivative of the latent density, and their theoretical analysis is developed under smoothness, boundary, and compact-support conditions. Our first-stage estimator instead maximizes the convolution likelihood over a regularized real-line Gaussian-mixture sieve, with component constraints and iterated-log BIC selection. It therefore avoids prespecifying compact latent support and is followed by posterior-density reconstruction, which generally yields an estimator outside the fitted sieve. Finally, unlike Bayesian and empirical-Bayes deconvolution procedures \citep{sarkar2014,efron2016,madridpadilla2018}, the posterior reconstruction considered here does not arise from a prior placed on the unknown latent density. It is constructed from the regularized convolution-likelihood estimator itself, so that the two stages can be analyzed jointly as a pair of frequentist deconvolution estimators.

Our first theoretical contribution concerns fixed-dimensional models. Under misspecification, the direct estimator converges to the density that best fits the observed convolution model, whereas posterior reconstruction converges to a generally different population target.
 We show that the population posterior map locally contracts every nonzero bounded multiplicative perturbation of the true density in chi-square divergence when convolution is identifiable. Under correct specification, the two estimators are both consistent, but posterior reconstruction adds an orthogonal, nonnegative first-order variance component.
Our second contribution concerns growing sieves. We separate the analysis into three modules: estimation of the convolved density on the observed scale, stability of the inverse map from the observed to the latent domain, and empirical fluctuation of the fitted posterior kernels. This decomposition yields conditions under which posterior reconstruction preserves the latent \(L^1\) convergence order of the direct estimator even though it generally lies outside the fitted sieve.
We consider the modular theory under Gaussian and ordinary-smooth measurement error. For Gaussian error, we obtain explicit convergence rates on an infinite-dimensional Gaussian-smoothed latent class. 
For ordinary-smooth errors, we recover the classical polynomial deconvolution exponent, up to logarithmic factors, and verify all required conditions for Laplace error. The simulations exhibit the corresponding bias--variance pattern: posterior reconstruction is most useful when the first-stage sieve is appreciably misspecified, whereas the direct estimator is preferable when a low-dimensional Gaussian mixture already provides an accurate representation.

The remainder of the paper is organized as follows. Section~\ref{sec:methodology} introduces the direct estimator, posterior reconstruction, and the regularized Gaussian-mixture sieve. Section~\ref{sec:theory} compares the two estimators in fixed-dimensional models, under both misspecification and correct specification. Section~\ref{sec:growing-sieve-theory} develops the growing-sieve theory, and Section~\ref{sec:explicit-sieve-rates} specializes it to Gaussian and ordinary-smooth measurement error. Sections~\ref{sec:simulation} and~\ref{sec:realdata} report the simulation study and the Framingham application. Section~\ref{sec:future-work} discusses limitations and extensions. Proofs, deferred technical results, implementation details, and complete simulation tables are provided in the Supplementary Material.
\section{Likelihood estimation and posterior reconstruction}
\label{sec:methodology}
In model~\eqref{eq:additive-model}, a candidate latent density \(f\) induces the observed one \(g_{f}=f*f_\varepsilon\).
We use the notation
   $ \ell_f(y)=\log g_f(y).$
Thus, the likelihood is evaluated on the observed scale, while the optimization is carried out over latent densities. 
Define the empirical convolution log-likelihood by
$
   \ell_n(f)
    =
    \sum_{i=1}^n \ell_f(Y_i).$
Let
\(
    \mathcal F_n
    =
    \bigcup_{K=1}^{K_n}\mathcal F_{K,n}
\)
be an indexed sieve of latent-density models, where
\(\mathcal F_{K,n}\) denotes the admissible \(K\)-component mixture
class at sample size \(n\). The index \(n\) indicates that both the
admissible parameter space underlying \(\mathcal F_{K,n}\) and the
maximum model order \(K_n\) may vary with the sample size.
We define the first-stage estimator by maximizing the
following penalized convolution likelihood:
\begin{equation}
\label{eq:penalized-sieve-estimator}
\widehat f
\in
\argmax_{f\in\mathcal F_n}
\left\{
    \frac1n\sum_{i=1}^n \ell_f(Y_i)
    -
    \frac{\operatorname{pen}_n(f)}{n}
\right\}.
\end{equation}
For \(f\in\mathcal F_{K,n}\), the penalty is the iterated-log BIC one given by
\(
    \operatorname{pen}_n(f)
    =
    (d_K/2)\,
    \mathsf L^{\circ 3}(n)\log n,
\)
where \(d_K\) denotes the number of free parameters in the
\(K\)-component model, \(a\vee b=\max\{a,b\}\), and
\(\mathsf L^{\circ 3}(n)\) denotes the third iterate of
\(\mathsf L(x)=\log(e\vee x)\), that is,
\(\mathsf L^{\circ 3}(n)
=\mathsf L\bigl(\mathsf L(\mathsf L(n))\bigr)\).
Accordingly, denoting by
\(\widehat f_K\) the convolution-likelihood maximizer over
\(\mathcal F_{K,n}\), the selected mixture order is
\begin{equation}
\label{eq:bic}
\widehat K
=
\min\operatorname*{arg\,min}_{1\leq K\leq K_n}
\left\{
    -2\ell_n(\widehat f_K)
    +
    d_K\mathsf L^{\circ 3}(n)\log n
\right\}.
\end{equation}
Criterion~\eqref{eq:bic} is the \(\nu\)-BIC of
\citet{nguyennguyen2026} with \(\nu=3\). The additional iterated-log factor provides the asymptotic penalty used in the model-selection argument. Since
\(\mathsf L^{\circ3}(n)=1\) for \(n<3.8\times10^6\), the criterion is numerically identical to the ordinary BIC at all sample sizes considered below. Section~\ref{supp-app:fixed-order-selection} of the Supplementary Material establishes consistency of the corresponding idealized global selector over a fixed candidate range under Gaussian and Laplace measurement error. This fixed-range result is distinct from the growing-sieve analysis used to derive the nonparametric convergence rates.
For the selected order \(\widehat K\), the first-stage estimator in \eqref{eq:penalized-sieve-estimator} has the
representation
\begin{equation}\label{eq:first-stage-mixture-representation}
    \widehat f(x)
=
\sum_{k=1}^{\widehat K}
\widehat\pi_k \widehat h_k(x),
\qquad
\widehat\pi_k\ge 0,
\quad
\sum_{k=1}^{\widehat K}\widehat\pi_k=1,
\end{equation}
where \(\widehat h_k\) denotes the fitted density of \(X\) conditional on
component \(k\). The corresponding fitted observed component is
\(
\widehat g_k(y)
=
\int \widehat h_k(u) f_\varepsilon(y-u)\,du,
\)
so that
\(
g_{\widehat f}(y)
=
\sum_{k=1}^{\widehat K}
\widehat\pi_k \widehat g_k(y).
\)
Let \(f_0\) denote the true latent density and let
\(g_0=g_{f_0}\) denote the corresponding observed density.
Let \(\Pzero\) denote the true distribution of \(Y\), and let
\(\Pn\) denote the empirical measure of \(Y_1,\ldots,Y_n\). For any candidate latent density \(f\) such that
\(g_f(y)>0\), define
\[
q_f(x\mid y)
=
\frac{f(x)f_\varepsilon(y-x)}{g_f(y)},
\]
the conditional density of \(X\) given \(Y=y\) when \(f\) is used as the
latent law. When convenient, we write
\(
q_{f,x}(y)=q_f(x\mid y).
\)

We distinguish the population and empirical posterior-averaging maps,
\[
\mathcal R_0(f)(x)
=
\Pzero q_f(x\mid Y),
\qquad
\mathcal R_n(f)(x)
=
\Pn q_f(x\mid Y)
=
\frac1n\sum_{i=1}^n q_f(x\mid Y_i).
\]
At the true latent density, the law of total probability gives the self-consistency identity \(\mathcal R_0(f_0)(x)=\Pzero q_{f_0}(x\mid Y)=f_0(x)\), which motivates replacing the unknown \(f_0\) by the convolution-likelihood estimate \(\widehat f\) and the population average by its empirical counterpart.
 We therefore define the posterior
reconstruction by
\begin{equation}
\label{eq:posterior-density-estimator}
\widehat f_{\post}(x)
=
\mathcal R_n(\widehat f)(x)
=
\frac1n\sum_{i=1}^n
\frac{\widehat f(x)f_\varepsilon(Y_i-x)}
     {g_{\widehat f}(Y_i)}.
\end{equation}

Each \(q_{\widehat f}(\cdot\mid Y_i)\) is a density, so
\(\widehat f_{\post}\) is nonnegative and integrates to one without
additional normalization. The construction consists of a single empirical
posterior-averaging update and need not remain in the finite-mixture sieve
used to obtain \(\widehat f\). The distinction between
\(\mathcal R_0\) and \(\mathcal R_n\) will also be useful in the theoretical
analysis: the former describes the population effect of posterior
reconstruction, whereas the latter includes the empirical fluctuation
introduced by averaging over the observed sample. Sections~\ref{sec:theory}
and~\ref{sec:growing-sieve-theory} study these effects in the
fixed-dimensional and growing-sieve regimes, respectively. The construction
is also connected to compound-decision and nonparametric maximum-likelihood
methods
\citep{robbins1956,kieferwolfowitz1956,laird1978,efron2011}.

\begin{remark}
The fitted mixture structure gives a componentwise representation of
\eqref{eq:posterior-density-estimator}. Let
$\widehat r_{ik}=\widehat\pi_k\widehat g_k(Y_i)/\sum_{\ell=1}^{\widehat K}\widehat\pi_\ell\widehat g_\ell(Y_i),$ with $k=1,\ldots,\widehat K$
be the fitted posterior responsibility of component \(k\) for observation
\(Y_i\). Then we can write
\[
\widehat f_{\post}(x)
=
\frac1n\sum_{i=1}^n\sum_{k=1}^{\widehat K}
\widehat r_{ik}
\widehat f_{X\mid Y,k}(x\mid Y_i),
\]
where
\(
\widehat f_{X\mid Y,k}(x\mid Y_i)
=
\widehat h_k(x) f_\varepsilon(Y_i-x)/
\widehat g_k(Y_i)
.
\)
Thus each observation contributes a full fitted conditional density, with
mixture components weighted by their posterior responsibilities.
\end{remark}

We now specialize the generic sieve  to the finite Gaussian
mixtures. 
Thus, a candidate latent density is represented by the parameter vector \(\theta=(\pi_1,\ldots,\pi_K,\mu_1,\ldots,\mu_K,\sigma_1,\ldots,\sigma_K)\), with the usual simplex constraint on the weights. The regularized parameter space is
\begin{equation}
\label{eq:regularized-parameter-space}
\Theta_{K,n}^{\mathrm{reg}}
=
\left\{
\theta:
\pi_k\ge \pi_{\min,n},\quad
\sigma_{\min,n}\le \sigma_k\le \sigma_{\max,n},\quad
|\mu_k|\le M_n,
\quad k=1,\ldots,K
\right\},
\end{equation}
that contains deterministic tuning sequences chosen so that the sieve expands
with \(n\). 
\label{rem:calibrazione}
The bounds in \(\Theta_{K,n}^{\mathrm{reg}}\) have complementary roles. The
upper bound \(K_n\) controls the size of the sieve. The weight constraint
\(\pi_k\ge\pi_{\min,n}\) excludes asymptotically negligible components while
keeping the simplex constraint feasible. The lower scale bound
\(\sigma_{\min,n}\) regularizes the latent sieve and enters the latent
approximation, inverse-stability, and posterior empirical-process arguments.
Under Gaussian measurement error, it is not needed to prevent the usual
finite-mixture singularity of the observed likelihood, because convolution
gives every observed component a variance bounded below by
\(\sigma_\varepsilon^2\). In particular,
Proposition~\ref{supp-prop:gaussian-global-likelihood} of the Supplementary
Material shows that the global Gaussian observed-likelihood bound does not use
\(\sigma_{\min,n}^{-1}\). The upper scale bound
\(\sigma_{\max,n}\le\overline\sigma<\infty\) excludes arbitrarily diffuse
components, while the location bound \(|\mu_k|\le M_n\) controls escaping
components and latent tails. As \(n\) increases, \(K_n\) and \(M_n\) may grow
and \(\sigma_{\min,n}\) may decrease, so the sieve becomes richer while
retaining the regularity used in the rate arguments. These restrictions are
sufficient for the approximation, covering, inverse-stability, and posterior
empirical-process arguments used below.
The corresponding fixed-\(K\) density class is \(\mathcal F_{K,n}^{\mathrm{reg}} = \{f_\theta:\theta\in\Theta_{K,n}^{\mathrm{reg}}\}\). The full sieve allows the number of components to vary up to a deterministic upper bound \(K_n\), which increases with \(n\):
\begin{equation}\label{Fn}
\mathcal F_n
=
\bigcup_{K\le K_n}\mathcal F_{K,n}^{\mathrm{reg}}.
\end{equation}
 For fixed \(K\), let \(\widehat\theta_K\) denote the maximizer of the convolution likelihood over \(\Theta_{K,n}^{\mathrm{reg}}\). In finite samples, \(K\) is selected by fitting the candidate models and minimizing the
iterated-log BIC criterion in~\eqref{eq:bic}, with $d_K=3K-1$.
The generic first-stage representation
\eqref{eq:first-stage-mixture-representation} takes the form
$\widehat f(x)
=
\sum_{k=1}^{\widehat K}
\widehat\pi_k\phiN(x;\widehat\mu_k,\widehat\sigma_k^2).$
The preceding parameter restrictions belong to the growing-sieve
framework, in which \(K_n\) and the admissible parameter ranges may
depend on \(n\). In particular, the positive lower bound
\(\pi_{\min,n}\) is a vanishing regularization device. By comparison, the
fixed-order model-selection result in Section~\ref{supp-app:fixed-order-selection} of the Supplementary Material considers a fixed
candidate range \(1\leq K\leq\bar K<\infty\) and is formulated for the
corresponding idealized selector on the closed mixture simplex, where
zero weights are allowed.
The general formulation permits sample-size-dependent scale bounds. The  Gaussian result in
Section~\ref{sec:gaussian-smoothed-end-to-end} analyzes a specialized sub-sieve whose component scales
remain in a fixed compact interval bounded away from zero, whereas the broad observed-domain approximation
result uses a different shrinking-scale calibration.

 We use
Gaussian mixtures as a baseline because they combine approximation flexibility,
standard entropy control, and stable finite-mixture computation
\citep{ghosalvdv2001,kruijer2010,mclachlan2000}. 

\section{Fixed-dimensional theory}
\label{sec:theory}

This section isolates the fixed-dimensional statistical effect of posterior reconstruction. We first describe the population targets of the two density estimators. We then study three questions specific to posterior reconstruction: whether posterior reconstruction changes the population target under misspecification, whether this change can reduce local misspecification bias, and what first-order variance is associated with the reconstruction under correct specification. The growing-sieve analysis is developed separately in Section~\ref{sec:growing-sieve-theory}.


 We use throughout the posterior kernel \(q_f\) and the population and empirical posterior-averaging maps \(\mathcal R_0\) and \(\mathcal R_n\) introduced in Section~\ref{sec:methodology}. The distinction between the two maps is central here: \(\mathcal R_0\) determines the population effect of posterior reconstruction, whereas \(\mathcal R_n\) is the empirical update that defines \(\widehat f_{\post}\).

\subsection{Population targets}

Let \(\mathcal F_K=\{f_\theta:\theta\in\Theta_K\}\) denote a fixed-dimensional
mixture family, with \(K\) not depending on \(n\). Under correct specification
and standard fixed-dimensional regularity, consistency of the
convolution-likelihood parameter estimator implies pointwise consistency of
both \(\widehat f\) and \(\widehat f_{\post}\); integrated \(L^1\) and \(L^2\)
consistency follow under the corresponding domination conditions. A precise
statement is given in Proposition~\ref{supp-prop:supp-fixed-consistency} of the
Supplementary Material.

The more substantive distinction appears when \(\mathcal F_K\) is used as an
approximation rather than as the data-generating model. Let
\(
    f^\star
    \in
    \argmax_{f\in\mathcal F_K}
    \Pzero\log g_f(Y)
\)
be a population convolution-likelihood projection of \(f_0\) onto
\(\mathcal F_K\). The direct estimator then targets \(f^\star\), whereas
posterior reconstruction generally has a different population limit, as made
explicit by the following proposition.

\begin{proposition}[Pseudo-targets under fixed-model misspecification]
\label{prop:misspecification}
Suppose that \(\widehat f \xrightarrow{p}f^\star\) and that, for every fixed \(x\),
\(
q_{\widehat f}(x\mid\cdot)\xrightarrow{p}q_{f^\star}(x\mid\cdot)\) in $L^1(\Pzero)$.
Assume also that posterior averaging is locally stable at \(f^\star\), in the sense that
\(
(\Pn- \Pzero)q_{\widehat f}(x\mid\cdot)=o_p(1)
\)
 in the relevant integrated norm. Then, for every fixed \(x\),
\(
\widehat f_{\post}(x) \xrightarrow{p} \mathcal R_0(f^\star)(x).
\)
Thus, under fixed-model misspecification, the direct first-stage estimator targets
the convolution-likelihood projection \(f^\star\), whereas the posterior-density
estimator targets its posterior-reweighted version \(\mathcal R_0(f^\star)\).
\end{proposition}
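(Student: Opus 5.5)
The plan is a two-term decomposition of the estimator at a fixed point \(x\):
\[
\widehat f_{\post}(x)-\mathcal R_0(f^\star)(x)
=
(\Pn-\Pzero)\,q_{\widehat f}(x\mid\cdot)
+
\Pzero\bigl[q_{\widehat f}(x\mid\cdot)-q_{f^\star}(x\mid\cdot)\bigr].
\]
The first term is the empirical fluctuation of the fitted posterior kernel. The second is the population effect of replacing \(f^\star\) by \(\widehat f\) inside the posterior map. It suffices to show that each term is \(o_p(1)\) for every fixed \(x\).

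For the second term, note that \(\Pzero\) integrates against \(g_0\,dy\). Hence
\[
\bigl|\Pzero[q_{\widehat f}(x\mid\cdot)-q_{f^\star}(x\mid\cdot)]\bigr|
\le \|q_{\widehat f}(x\mid\cdot)-q_{f^\star}(x\mid\cdot)\|_{L^1(\Pzero)}.
\]
The right-hand side is \(o_p(1)\) by the assumed \(L^1(\Pzero)\) convergence at each fixed \(x\). In the concrete Gaussian-mixture setting I would also record how this hypothesis follows from \(\widehat f\to f^\star\). The map \(\theta\mapsto q_{f_\theta}(x\mid y)\) is continuous. It is also dominated by \(\sup_\theta f_\theta(x)f_\varepsilon(y-x)/\inf_\theta g_{f_\theta}(y)\) on a neighbourhood of the limit parameter. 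Dominated convergence along subsequences, together with the subsequence characterization of convergence in probability, then gives the hypothesis. Since it is assumed, this check is only a remark.

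The first term is the main obstacle. Here \(\widehat f\) is data-dependent, so \(q_{\widehat f}(x\mid\cdot)\) is a random function of the same sample, and a plain law of large numbers does not apply. The statement assumes this term is \(o_p(1)\) in an integrated norm over \(x\), for instance \(L^1(dx)\). I first reduce that integrated statement to the pointwise claim. I expect the intended reading is that the stability holds pointwise as well; the integrated form is what yields the \(L^1\) version of the conclusion. If only the integrated version is available, I would prove the pointwise claim directly by a Glivenko--Cantelli argument. Fix \(x\) and consider the class \(\{q_f(x\mid\cdot):f\in\mathcal F_K,\ d(f,f^\star)<\delta\}\). For Gaussian-mixture parameters in a compact neighbourhood, the map \(\theta\mapsto q_{f_\theta}(x\mid y)\) is continuous and has an integrable envelope, so this is a parametric class with finite bracketing numbers and is \(\Pzero\)-Glivenko--Cantelli. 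With probability tending to one, \(\widehat f\) lies in this neighbourhood. The supremum of \(|(\Pn-\Pzero)q_f(x\mid\cdot)|\) over the class is then \(o_p(1)\), and therefore so is the first term.

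Combining the two bounds gives \(\widehat f_{\post}(x)\xrightarrow{p}\mathcal R_0(f^\star)(x)\). For the interpretation, the direct estimator converges to \(f^\star\) by hypothesis. The conclusion shows that \(\widehat f_{\post}\) instead targets \(\mathcal R_0(f^\star)\). This generally differs from \(f^\star\), because \(\mathcal R_0(f^\star)=f^\star\) holds precisely when \(\Pzero[f_\varepsilon(Y-x)/g_{f^\star}(Y)]=1\) on \(\{f^\star>0\}\). By the self-consistency identity this holds when \(g_{f^\star}=g_0\), but typically not under misspecification.
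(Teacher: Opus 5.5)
Your proposal is correct and follows the paper's proof. You use the same split of \(\widehat f_{\post}(x)-\mathcal R_0(f^\star)(x)\) into \((\Pn-\Pzero)q_{\widehat f}(x\mid\cdot)\), controlled by the stability assumption, and \(\Pzero\{q_{\widehat f}(x\mid\cdot)-q_{f^\star}(x\mid\cdot)\}\), controlled by the \(L^1(\Pzero)\) convergence of the kernels. Your observation that an integrated stability bound does not by itself give the pointwise claim is fair, since the paper simply applies the assumption pointwise; your Glivenko--Cantelli fallback is a valid optional supplement that the paper does not use.
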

\begin{proof}
See Section~\ref{supp-app:main-proofs} of the Supplementary Material. 
\end{proof}
A simple algebraic identity makes the distinction between the two pseudo-targets explicit. Let \(f\) be a density such that \(g_f(y)>0\) for
\(g_{f_0}\)-almost every \(y\), and assume that the integral below is well
defined. Then
\(
    \mathcal R_0(f)(x)
=
f(x)\{1+\Delta_f(x)\},
\)
where
\(
    \Delta_f(x)
    =
    \int
    f_\varepsilon(y-x)
    \frac{g_{f_0}(y)-g_f(y)}{g_f(y)}\,dy .
\)
Evaluated at \(f=f^\star\), the identity shows how the observed-domain discrepancy modifies the likelihood projection. In particular, the two targets coincide when \(g_{f^\star}=g_{f_0}\), and need not coincide otherwise.
In the misspecified M-estimation framework, $f^\star$
 is the pseudo-true element selected by the observed convolution likelihood (see \cite{neweymcfadden1994} and \cite{vandervaart1998}). What is specific to the present construction is that the second-stage estimator does not inherit this likelihood projection as its population target. Instead, posterior reconstruction generates the distinct target
\(\mathcal R_0(f^\star)\). Thus, misspecification leads here to two systematically related population limits rather than to a single pseudo-true density.
\subsubsection{Local bias contraction under misspecification}
\label{subsec:bias-contraction}

Proposition~\ref{prop:misspecification} 
alone does not establish whether \(\mathcal R_0(f^\star)\) is closer to the true
latent density than \(f^\star\) is. We first derive an exact criterion for when the population posterior update reduces the latent \(L^2\) bias.
For \(h\in L^2(\mathbb R)\), define the convolution operator
\(
\mathcal A [h](y)
=
\int_{\mathbb R} f_\varepsilon(y-x)h(x)\,dx.
\)
Since \(f_\varepsilon\in L^1(\mathbb R)\), the operator
\(\mathcal A :L^2(\mathbb R)\to L^2(\mathbb R)\) is bounded, and
\(g_f=\mathcal A [f]\) for every candidate latent density \(f\in L^2(\mathbb R)\).
By Fubini's theorem, its \(L^2\)-adjoint is
\(
\mathcal A ^\ast[v](x)
=
\int_{\mathbb R}f_\varepsilon(y-x)v(y)\,dy.
\)
Using this notation, the population posterior-averaging operator can be written as
\(
\mathcal R_0(f)(x)
=
f(x)
\mathcal A^\ast\left[g_0/g_f\right](x),
\)
whenever \(g_f>0\) \(\Pzero\)-almost everywhere. The ratio \(g_0/g_f\) measures the discrepancy between the true observed density and that induced by the candidate \(f\), while \(\mathcal A ^\ast\) transports this multiplicative correction from the observed domain back to the latent domain. 
For a density \(f\) such that \(g_f>0\) \(\Pzero\)-almost everywhere, write
\(e=f-f_0\) and, whenever the expression is well defined, set
\(
C_f[e]
=
f\,\mathcal A^\ast\left[\mathcal A[e]/g_f\right].
\)
If \(e,C_f[e]\in L^2(\mathbb R)\), the population posterior update satisfies
the exact decomposition
\(
\mathcal R_0(f)-f_0
=
e-C_f[e],
\)
and hence
\(
\|\mathcal R_0(f)-f_0\|_2^2
=
\|f-f_0\|_2^2
-
2\langle e,C_f[e]\rangle
+
\|C_f[e]\|_2^2.
\)
The exact identity is established in
Lemma~\ref{supp-lem:supp-posterior-exact-error} of the Supplementary Material. Thus, posterior reconstruction reduces the
latent \(L^2\) error exactly when
\[
\|\mathcal R_0(f)-f_0\|_2
<
\|f-f_0\|_2
\quad\Longleftrightarrow\quad
2\langle e,C_f[e]\rangle
>
\|C_f[e]\|_2^2.
\]
This criterion depends on the shape of the misspecification because the
correction \(C_f[e]\) itself depends on \(f\).

We therefore consider a generic local misspecification of the true density
of the form
\(
f_t(x)=f_0(x)\{1+t s(x)\},
\)
where \(t\) controls the magnitude of the deviation from \(f_0\) and \(s\)
describes its relative shape. To preserve the normalization of \(f_t\), \(s\) must have mean
zero under \(f_0\). Accordingly, let
\[
L_0^2(f_0)
=
\left\{
s:
\int_{\mathbb R}s(x)^2f_0(x)\,dx<\infty,
\quad
\int_{\mathbb R}s(x)f_0(x)\,dx=0
\right\}
\]
be the mean-zero subspace of \(L^2(f_0)\).
To describe how a latent perturbation is transmitted to the observed domain,
define the operator
\(
K:L^2(f_0)\longrightarrow L^2(g_0)
\)
by
\[
(Ks)(y)
=
\mathbb E_0\{s(X)\mid Y=y\}
=
\frac{\mathcal A[f_0s](y)}{g_0(y)},
\qquad g_0(y)>0.
\]
Thus, \(Ks\) is the observed-domain counterpart of \(s\). Indeed, the perturbation \(f_t=f_0(1+ts)\) induces the observed density
\(
g_t=g_0(1+tKs).
\)
To map an observed-domain function back to the latent domain, let
\(
K^\ast:L^2(g_0)\longrightarrow L^2(f_0)
\)
denote the adjoint of \(K\). Conversely, the adjoint \(K^\ast\) maps an observed-domain function
\(v\in L^2(g_0)\) back to the latent domain by conditional expectation:
\(
(K^\ast v)(x)=\mathbb E_0\{v(Y)\mid X=x\}=\mathcal A^\ast[v](x).
\)
The composition
\(
T=K^\ast K
\)
therefore maps a latent perturbation \(s\) to the component that remains
after passing from the latent domain to the observed domain and back.
The operator \(K\) is contractive, while \(T\) is positive and
self-adjoint on \(L^2(f_0)\), with
\[
\langle s,Ts\rangle_{L^2(f_0)}
=
\|Ks\|_{L^2(g_0)}^2,
\qquad
\|Ts\|_{L^2(f_0)}
\leq
\|Ks\|_{L^2(g_0)}
\leq
\|s\|_{L^2(f_0)}.
\]
These properties are established in
Lemma~\ref{supp-lem:supp-conditional-information-operator} of the
Supplementary Material.

For the local family \(f_t=f_0(1+ts)\), assume that
\(s\in L_0^2(f_0)\cap L^\infty(f_0)\) and
\(|t|\|s\|_\infty<1\). The mean-zero condition ensures that \(f_t\)
integrates to one, while the restriction on \(t\) guarantees its
nonnegativity. By linearity of \(\mathcal A\) and the definition of \(K\),
\(g_t=\mathcal A[f_t]=g_0(1+tKs)\). Moreover,
\(\|Ks\|_\infty\leq\|s\|_\infty\), and hence
\(g_t/g_0\geq1-|t|\|s\|_\infty>0\).
For densities \(p\ll q\), let
\(\chi^2(p,q)=\int_{\mathbb R}(p(x)/q(x)-1)^2q(x)\,dx\)
denote the chi-square divergence of \(p\) from \(q\). For the local family
\(f_t=f_0(1+ts)\), this becomes
\(
\chi^2(f_t,f_0)=t^2\|s\|_{L^2(f_0)}^2,
\)
so that chi-square divergence directly measures the size of the local
misspecification. The next theorem examines how the population posterior
update changes this quantity.
\begin{theorem}[Local chi-square contraction]
\label{thm:local-posterior-contraction}
Let \(I\) denote the identity operator on \(L_0^2(f_0)\), \(s\in L_0^2(f_0)\cap L^\infty(f_0)\), and set
\(
f_t=f_0(1+ts),
\) and
$|t|\|s\|_\infty<1.
$
Then, as \(t\to0\),
\[
\frac{\mathcal R_0(f_t)-f_0}{f_0}
=
t(I-T)s
+
O_{L^2(f_0)}(t^2).
\]
Consequently,
\[
\chi^2\{\mathcal R_0(f_t),f_0\}
-
\chi^2(f_t,f_0)
=
-t^2
\langle s,(2T-T^2)s\rangle_{L^2(f_0)}
+
O(|t|^3),
\]
where
\(
\langle s,(2T-T^2)s\rangle_{L^2(f_0)}
\geq
\|Ks\|_{L^2(g_0)}^2.
\)
Hence the contraction is strict whenever \(Ks\neq0\).
If, in addition, \(g_0>0\) almost everywhere with
\(
\varphi_\varepsilon(u)\neq0
\) and \(
u\in\mathbb R,
\)
then every nonzero
\(s\in L_0^2(f_0)\cap L^\infty(f_0)\)
satisfies
\[
\chi^2\{\mathcal R_0(f_t),f_0\}
<
\chi^2(f_t,f_0)
\]
for all sufficiently small \(t\neq0\).
\end{theorem}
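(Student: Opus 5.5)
The plan is to compute \(\mathcal R_0(f_t)\) exactly and then expand. Using \(\mathcal R_0(f)=f\,\mathcal A^\ast[g_0/g_f]\) with \(g_t=g_0(1+tKs)\), we obtain
\[
\frac{\mathcal R_0(f_t)}{f_0}
=(1+ts)\,\mathcal A^\ast\!\left[\frac{1}{1+tKs}\right]
=(1+ts)\,K^\ast\!\left[\frac{1}{1+tKs}\right].
\]
The last equality uses \(\mathcal A^\ast[v]=K^\ast v\), i.e.\ \(K^\ast v(x)=\E_0\{v(Y)\mid X=x\}\). Since \(\|Ks\|_\infty\le\|s\|_\infty\) and \(|t|\|s\|_\infty<1\), we can expand the reciprocal as
\[
(1+tKs)^{-1}=1-tKs+t^2(Ks)^2\rho_t,
\qquad \rho_t=(1+tKs)^{-1}.
\]
For \(|t|\|s\|_\infty\le 1/2\), the remainder factor satisfies \(\|\rho_t\|_\infty\le 2\). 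Because \(K^\ast 1=1\), and \(K^\ast\) is a contraction on \(L^\infty\) and on \(L^2\) (Lemma~\ref{supp-lem:supp-conditional-information-operator}), this gives
\[
\frac{\mathcal R_0(f_t)}{f_0}=(1+ts)\bigl(1-tTs+t^2r_t\bigr),
\qquad \|r_t\|_\infty\le 2\|s\|_\infty^2 .
\]
Expanding the product yields \(1+t(s-Ts)+t^2 R_t\). Here \(R_t=-sTs+r_t+ts\,r_t\) is bounded in sup norm uniformly in small \(t\). Since \(f_0\) is a probability density, it is therefore bounded in \(L^2(f_0)\). This proves the first display. Mean-zero preservation holds because \(\langle 1,Ts\rangle=\langle K1,Ks\rangle=\langle 1,Ks\rangle_{g_0}=\int s f_0=0\), so \((I-T)s\in L_0^2(f_0)\).

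For the chi-square statement, \(\chi^2\{\mathcal R_0(f_t),f_0\}=\|\mathcal R_0(f_t)/f_0-1\|_{L^2(f_0)}^2\). Substituting the expansion gives
\[
t^2\|(I-T)s\|^2+2t^3\langle (I-T)s,R_t\rangle+t^4\|R_t\|^2
=t^2\|(I-T)s\|^2+O(|t|^3).
\]
Subtracting \(\chi^2(f_t,f_0)=t^2\|s\|^2\) and using self-adjointness of \(T\), we have \(\|(I-T)s\|^2-\|s\|^2=-\langle s,(2T-T^2)s\rangle\). For the lower bound, write
\[
\langle s,(2T-T^2)s\rangle=2\|Ks\|^2-\|Ts\|^2\ge \|Ks\|^2,
\]
since \(\|Ts\|\le\|Ks\|\) by the same lemma. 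Strict contraction for small \(t\neq0\) then follows whenever \(Ks\neq0\), because the negative \(t^2\) term dominates the \(O(|t|^3)\) remainder.

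The remaining and main step is injectivity: showing that \(Ks=0\) forces \(s=0\) when \(g_0>0\) a.e.\ and \(\varphi_\varepsilon\) never vanishes. The argument is as follows.
\begin{itemize}
\item If \(Ks=0\), then \(\mathcal A[f_0s]=g_0\,Ks=0\) a.e.
\item The function \(h=f_0 s\) lies in \(L^1\), since \(|h|\le\|s\|_\infty f_0\).
\item Taking Fourier transforms, \(\varphi_\varepsilon(u)\,\widehat h(u)=0\) for all \(u\), hence \(\widehat h\equiv0\).
\item By Fourier uniqueness in \(L^1\), \(h=0\) a.e., so \(s=0\) \(f_0\)-a.e., which is the zero element of \(L^2(f_0)\).
\end{itemize}
Thus every nonzero \(s\) has \(Ks\neq0\) and the contraction is strict. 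The only delicacy here is interpreting "nonzero" as nonzero in \(L^2(f_0)\). The hypothesis \(g_0>0\) is used only so that \(K\) is well defined on \(\{g_0>0\}\). It also guarantees that \(Ks=0\) in \(L^2(g_0)\) implies \(\mathcal A[h]=0\) Lebesgue-a.e., which is what the Fourier argument requires.
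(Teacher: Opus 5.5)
Your proposal is correct and follows the paper's proof essentially step by step. It uses the same exact expansion \((1+ta)^{-1}=1-ta+t^2a^2/(1+ta)\) with \(a=Ks\), the same bound \(\|K^\ast[a^2/(1+ta)]\|_\infty\le 2\|s\|_\infty^2\) for \(|t|\|s\|_\infty\le 1/2\), the same identity \(\langle s,(2T-T^2)s\rangle=2\|Ks\|^2-\|Ts\|^2\ge\|Ks\|^2\), and the same Fourier-injectivity argument showing that \(Ks=0\) forces \(s=0\).
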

\begin{proof}
See Section~\ref{supp-app:main-proofs} of the Supplementary Material.
\end{proof}
Theorem~\ref{thm:local-posterior-contraction} provides a
population-level justification for the posterior reconstruction step.
Under an identifiable convolution model, every nonzero bounded local
departure from \(f_0\) is moved closer to \(f_0\) by the posterior update
in chi-square divergence. Thus, when the population target of the
first-stage estimator lies in a sufficiently small multiplicative
neighborhood of the true density, the posterior-density estimator does
more than converge to a different target: its population target is locally
less misspecified.
In this chi-square geometry, conditional expectation acts as a contraction
between \(L^2(f_0)\) and \(L^2(g_0)\). Posterior reconstruction may therefore
reduce misspecification bias while introducing additional sampling
variability, a tradeoff made explicit by the first-order analysis below.

\subsubsection{First-order expansions}
\label{sec:fixed-K-M-expansion}
We now turn to the first-order sampling behavior of the two estimators under correct fixed-dimensional specification. In this setting, both estimators are consistent, but posterior reconstruction introduces an additional empirical averaging step whose contribution is not captured by the direct likelihood estimator. The following result compares their asymptotic linear representations and makes the resulting variance cost explicit. Consistency under the corresponding fixed-(K) regularity conditions is established in Proposition~\ref{supp-prop} of the Supplementary Material. 
\begin{theorem}[Fixed-dimensional first-order expansions and variance cost]
\label{thm:mestimator-expansion}
Suppose that \(f_0=f_{\theta_0}\in\mathcal F_K\) in a correctly specified,
regular fixed-\(K\) mixture model, after fixing an admissible label ordering,
and that \(\widehat\theta\) is an interior convolution-likelihood maximizer
satisfying the usual fixed-dimensional M-estimation regularity conditions.

Write \(g_\theta=g_{f_\theta}\) and \(q_\theta=q_{f_\theta}\). Let
\(S_{\theta_0}(Y)=
\left.\partial_\theta\log g_\theta(Y)\right|_{\theta=\theta_0}\) and
\(I_0=\E_0\{S_{\theta_0}(Y)S_{\theta_0}(Y)^\top\}\).
Define
\(\psi_{\mathrm{dir},x}(Y)=
\dot f_{\theta_0}(x)^\top I_0^{-1}S_{\theta_0}(Y)\).
Also let
\(\dot{\mathcal R}_{0,\theta_0}(x)=
\left.\partial_\theta\mathcal R_0(f_\theta)(x)\right|_{\theta=\theta_0}
=\Pzero\dot q_{\theta_0}(x\mid Y)\)
and define
\(\psi_{\post,x}(Y)=q_{\theta_0}(x\mid Y)-f_0(x)
+\dot{\mathcal R}_{0,\theta_0}(x)^\top
I_0^{-1}S_{\theta_0}(Y)\).

Set \(\mathfrak f_{\mathrm{dir}}=\widehat f\) and
\(\mathfrak f_{\post}=\widehat f_{\post}\). Then, for every fixed \(x\) at
which the relevant derivatives exist and each
\(r\in\{\mathrm{dir},\post\}\),
\[
\sqrt n\{\mathfrak f_r(x)-f_0(x)\}
=
n^{-1/2}\sum_{i=1}^n\psi_{r,x}(Y_i)+o_p(1).
\]
Consequently,
\(\sqrt n\{\mathfrak f_r(x)-f_0(x)\}
\Rightarrow N(0,\Var_0\{\psi_{r,x}(Y)\})\)
for \(r\in\{\mathrm{dir},\post\}\).

Moreover, define
\(A_x=\E_0\{q_{\theta_0}(x\mid Y)S_{\theta_0}(Y)\}\) and
\(\xi_x(Y)=q_{\theta_0}(x\mid Y)-f_0(x)
-A_x^\top I_0^{-1}S_{\theta_0}(Y)\).
Then
\(\dot{\mathcal R}_{0,\theta_0}(x)=\dot f_{\theta_0}(x)-A_x\),
\(\psi_{\post,x}=\psi_{\mathrm{dir},x}+\xi_x\), and
\(\E_0\{\xi_x(Y)S_{\theta_0}(Y)\}=0\). Hence
\[
\Var_0\{\psi_{\post,x}(Y)\}
=
\Var_0\{\psi_{\mathrm{dir},x}(Y)\}
+
\Var_0\{\xi_x(Y)\}.
\]
\end{theorem}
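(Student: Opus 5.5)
The plan is to derive both expansions from the standard asymptotic linear representation of the interior convolution-likelihood maximizer, \(\sqrt n(\widehat\theta-\theta_0)=I_0^{-1}n^{-1/2}\sum_{i}S_{\theta_0}(Y_i)+o_p(1)\). This follows from the usual fixed-dimensional M-estimation conditions, together with the information identity \(-\E_0\partial_\theta S_{\theta_0}=I_0\) under correct specification. In the fixed-\(K\) setting, the iterated-log penalty and the selected order do not affect the expansion, since the order is fixed and the maximizer is interior.

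For the direct estimator, a first-order Taylor expansion of \(\theta\mapsto f_\theta(x)\) at \(\theta_0\) gives \(\sqrt n\{\widehat f(x)-f_0(x)\}=\dot f_{\theta_0}(x)^\top\sqrt n(\widehat\theta-\theta_0)+o_p(1)\). This immediately yields the representation with \(\psi_{\mathrm{dir},x}\).

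For the posterior estimator, I would split
\[
\widehat f_{\post}(x)-f_0(x)=(\Pn-\Pzero)q_{\theta_0}(x\mid\cdot)+\{\mathcal R_0(f_{\widehat\theta})(x)-\mathcal R_0(f_{\theta_0})(x)\}+(\Pn-\Pzero)\{q_{\widehat\theta}(x\mid\cdot)-q_{\theta_0}(x\mid\cdot)\},
\]
where I use the self-consistency identity \(\mathcal R_0(f_{\theta_0})=f_0\).

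\begin{itemize}
\item The first term is an i.i.d. average of \(q_{\theta_0}(x\mid Y)-f_0(x)\).
\item The second term is \(\dot{\mathcal R}_{0,\theta_0}(x)^\top(\widehat\theta-\theta_0)+o_p(n^{-1/2})\). This needs differentiability of \(\theta\mapsto\Pzero q_\theta(x\mid Y)\), obtained by differentiating under the integral with a local dominating envelope for \(\dot q_\theta\).
\item The third term is the main obstacle, since we must show it is \(o_p(n^{-1/2})\). I would try a mean-value bound first. On a shrinking neighbourhood of \(\theta_0\), write \(q_{\widehat\theta}-q_{\theta_0}=\dot q_{\tilde\theta}^\top(\widehat\theta-\theta_0)\). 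It then suffices that \(\sup_{\|\theta-\theta_0\|\le\delta}|(\Pn-\Pzero)\dot q_\theta(x\mid\cdot)|=o_p(1)\). This holds because \(\{\dot q_\theta(x\mid\cdot)\}\) is a finite-dimensional smooth parametric class with an integrable envelope, hence Glivenko--Cantelli, or Donsker by a Lipschitz-in-parameter argument. Combined with \(\widehat\theta-\theta_0=O_p(n^{-1/2})\), this gives the claim.
\end{itemize}

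Collecting terms gives the representation with \(\psi_{\post,x}\). Both \(\psi_{r,x}\) have mean zero and finite variance, using \(\E_0 S_{\theta_0}=0\) and \(\Pzero q_{\theta_0}(x\mid Y)=f_0(x)\). Asymptotic normality then follows from the CLT and Slutsky.

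For the variance decomposition, I would compute \(\dot{\mathcal R}_{0,\theta_0}\) via the factorization \(\mathcal R_0(f_\theta)(x)=\int f_\theta(x)f_\varepsilon(y-x)\,g_0(y)/g_\theta(y)\,dy\). Differentiating at \(\theta_0\), where \(g_{\theta_0}=g_0\), gives
\[
\dot f_{\theta_0}(x)\,\mathcal A^\ast[1](x)-\int f_0(x)f_\varepsilon(y-x)\,\frac{\dot g_{\theta_0}(y)}{g_0(y)}\,dy=\dot f_{\theta_0}(x)-\E_0\{q_{\theta_0}(x\mid Y)S_{\theta_0}(Y)\},
\]
because \(\mathcal A^\ast[1]=1\) and \(\dot g_{\theta_0}/g_0=S_{\theta_0}\). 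This is \(\dot f_{\theta_0}(x)-A_x\).

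Substituting into \(\psi_{\post,x}\) gives \(\psi_{\post,x}=\psi_{\mathrm{dir},x}+\xi_x\). Moreover, \(\E_0\{\xi_x S_{\theta_0}\}=A_x-0-A_x^\top I_0^{-1}I_0=0\), using \(\E_0 S_{\theta_0}=0\). Since \(\psi_{\mathrm{dir},x}\) is a linear function of \(S_{\theta_0}\), it is uncorrelated with \(\xi_x\). Both have mean zero, so the variances add, which gives the stated decomposition.
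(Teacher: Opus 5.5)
Your proposal is correct and follows the paper's proof essentially step for step. It uses the same M-estimator expansion for the direct estimator and the same three-term split of $\sqrt n\{\widehat f_{\post}(x)-f_0(x)\}$ via $\Pzero q_{\theta_0}(x\mid Y)=f_0(x)$. It also reaches the same identities $\dot{\mathcal R}_{0,\theta_0}(x)=\dot f_{\theta_0}(x)-A_x$ and $\E_0\{\xi_x(Y)S_{\theta_0}(Y)\}=0$, which you derive explicitly where the paper only asserts them.

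The one place you go beyond the paper is the remainder $(\Pn-\Pzero)\{q_{\widehat\theta}-q_{\theta_0}\}$, which the paper covers by assuming stochastic equicontinuity. Your Glivenko--Cantelli argument works, but write it with the integral form $\int_0^1\dot q_{\theta_0+t(\widehat\theta-\theta_0)}\,dt$ rather than a pointwise mean-value point $\tilde\theta$. That point would depend on $Y$, so it does not reduce directly to a supremum over a fixed class.
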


\begin{proof}
See Section~\ref{supp-app:main-proofs} of the Supplementary Material.
\end{proof}

Thus, in a correctly specified regular fixed-dimensional model, posterior
reconstruction cannot improve the first-order pointwise efficiency of the
direct likelihood estimator. Its potential advantage under
misspecification is a reduction of projection bias, obtained at the cost of
the additional residual sampling variance above.

\section{Growing-sieve theory}
\label{sec:growing-sieve-theory}
The growing-sieve theory is organized around three linked components. We first establish an observed-domain rate for the penalized convolution-likelihood estimator. We then use inverse-stability arguments to obtain the corresponding latent-density rate. Finally, we quantify the empirical fluctuation introduced by posterior averaging and identify conditions under which posterior reconstruction preserves the first-stage convergence order.

\subsection{Observed-domain likelihood rate}
\label{sec:primitive-sieve}

The fixed-dimensional results above do not rely on a specific choice of
mixture components. We now return to the regularized Gaussian-mixture sieve
\(\mathcal F_n\) defined in~\eqref{Fn}, allowing its complexity to increase
with \(n\). To obtain consistency over richer classes, the sieve must expand with \(n\),
while its growth remains sufficiently slow for the likelihood, entropy, and
inverse-stability arguments to apply. For densities \(p\) and \(q\), write
\(
\KL(p,q)=\int p\log(p/q),
\) and \(
\V(p,q)=\int p\{\log(p/q)\}^2,
\)
whenever these quantities are well defined.
\begin{assumption}[Gaussian-sieve approximation]
\label{ass:primitive-approximation}
There exists a sequence \(f_n^\circ\in\mathcal F_n\) such that
    $\KL(g_{f_0},g_{f_n^\circ})
    \le
    b_n^2,
    b_n\to0.$
\end{assumption}
\noindent The approximation is stated on the observed density space because this is the domain of the likelihood function; standard Gaussian-mixture approximation results support this condition \citep{ghosalvdv2001,kruijer2010}. Proposition~\ref{supp-prop:supp-obs-scale-appr} of the
Supplementary Material shows that this condition follows from a latent
Gaussian-mixture approximation.
To complement the approximation condition above, we bound the empirical
process over the full class of log-likelihood differences, avoiding a
separate treatment of a central region of the observation space and its
complement.
\begin{assumption}[Global log-likelihood complexity]
\label{ass:primitive-ulln}
Let
\(\mathcal D_n=\{\ell_f-\ell_{f_n^\circ}:f\in\mathcal F_n\}\).
Assume that \(\mathcal D_n\) is pointwise measurable and has a measurable
envelope \(H_n\) satisfying
\(|d(y)|\leq H_n(y)\) for every \(d\in\mathcal D_n\), with
\(\|H_n\|_{L^2(\Pzero)}\leq F_n\) and \(F_n\geq1\).
For some \(V_n\geq1\), \(L_n\geq e\), and \(C>0\), suppose that
$\log N_{[]}\!\left(
\epsilon F_n,\mathcal D_n,L^2(\Pzero)
\right)
\leq
C V_n\log\!\left(\frac{L_n}{\epsilon}\right),
0<\epsilon<1.$
Set \(\Gamma_n=V_n\log L_n\) and
\(c_n=F_n\{\sqrt{\Gamma_n/n}+\Gamma_n/n\}\), and assume that
\(c_n\to0\).
\end{assumption}
The standard \(L^2(\Pzero)\)-bracketing entropy integral gives the square-root term in \(c_n\); since
\(c_n\) also contains a nonnegative linear remainder, it follows that
\(
\sup_{f\in\mathcal F_n}
\left|
(\Pn-\Pzero)(\ell_f-\ell_{f_n^\circ})
\right|
=O_p(c_n).
\)

\begin{assumption}[Penalty calibration]\label{ass:primitive-penalty}The penalty is nonnegative and satisfies
$\frac{\operatorname{pen}_n(f_n^\circ)}{n}
    =
    o(1).$
\end{assumption}
The penalty may regularize the finite-sample search, provided that it remains
asymptotically negligible along the approximating sequence. For the
iterated-log BIC,
\(\operatorname{pen}_n(f_n^\circ)
=(d_{K_n^\circ}/2)\mathsf L^{\circ 3}(n)\log n\),
so Assumption~\ref{ass:primitive-penalty} holds whenever
\(d_{K_n^\circ}\mathsf L^{\circ 3}(n)\log n=o(n)\).
We collect the approximation, empirical-process, and penalty
contributions in
\[
    e_n
    =
    b_n
    +
    c_n^{1/2}
    +
    \left\{
        \frac{\operatorname{pen}_n(f_n^\circ)}{n}
    \right\}^{1/2}.
\]

\begin{proposition}[Observed-density likelihood rate]
\label{prop:observed-kl-rate}
Suppose that Assumptions~\ref{ass:primitive-approximation}-- \ref{ass:primitive-penalty} hold. Then
    $\KL(g_{f_0},g_{\widehat f})^{1/2}
    =
    O_p(e_n).$
\end{proposition}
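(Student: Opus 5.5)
The argument is the standard basic inequality for penalized sieve maximum likelihood. We compare \(\widehat f\) with the approximating element \(f_n^\circ\in\mathcal F_n\) of Assumption~\ref{ass:primitive-approximation}.

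\textbf{Step 1: basic inequality.} By the definition \eqref{eq:penalized-sieve-estimator} and because \(f_n^\circ\) is feasible,
\[
\Pn(\ell_{\widehat f}-\ell_{f_n^\circ})\geq \frac{\operatorname{pen}_n(\widehat f)-\operatorname{pen}_n(f_n^\circ)}{n}\geq -\frac{\operatorname{pen}_n(f_n^\circ)}{n},
\]
where the last step uses nonnegativity of the penalty (Assumption~\ref{ass:primitive-penalty}).

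\textbf{Step 2: pass to the population.} Write \(\Pn=\Pzero+(\Pn-\Pzero)\). Since \(\ell_{\widehat f}-\ell_{f_n^\circ}\in\mathcal D_n\), the uniform bound following Assumption~\ref{ass:primitive-ulln} gives
\[
\Pzero(\ell_{\widehat f}-\ell_{f_n^\circ})\geq -\frac{\operatorname{pen}_n(f_n^\circ)}{n}-O_p(c_n).
\]
Because \(\widehat f\) is random, we use the supremum over \(\mathcal D_n\) rather than a fixed-function bound. This needs the pointwise measurability built into Assumption~\ref{ass:primitive-ulln}.

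\textbf{Step 3: convert to KL.} We have the identity
\[
\Pzero(\ell_{\widehat f}-\ell_{f_n^\circ})=\KL(g_{f_0},g_{f_n^\circ})-\KL(g_{f_0},g_{\widehat f}),
\]
valid whenever both divergences are finite. Finiteness of the first is Assumption~\ref{ass:primitive-approximation}. The envelope \(H_n\in L^2(\Pzero)\) makes \(\Pzero|\ell_{\widehat f}-\ell_{f_n^\circ}|<\infty\), and with the finite first divergence this gives finiteness of the second. Rearranging,
\[
\KL(g_{f_0},g_{\widehat f})\leq b_n^2+\frac{\operatorname{pen}_n(f_n^\circ)}{n}+O_p(c_n).
\]

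\textbf{Step 4: take square roots.} Using \(\sqrt{a+b+c}\leq\sqrt a+\sqrt b+\sqrt c\), we obtain
\[
\KL(g_{f_0},g_{\widehat f})^{1/2}=O_p\bigl(b_n+c_n^{1/2}+\{\operatorname{pen}_n(f_n^\circ)/n\}^{1/2}\bigr)=O_p(e_n).
\]

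\textbf{Main obstacle.} The substantive step is the uniform empirical-process bound \(\sup_{\mathcal D_n}|(\Pn-\Pzero)d|=O_p(c_n)\). The paper treats it as a consequence of Assumption~\ref{ass:primitive-ulln}, but I would verify it via a maximal inequality with bracketing (e.g.\ van der Vaart–Wellner Lemma 3.4.2-type or Bernstein-bracketing version). The bracketing integral
\[
\int_0^1\sqrt{1+V_n\log(L_n/\epsilon)}\,d\epsilon\lesssim\sqrt{\Gamma_n}
\]
gives the \(F_n\sqrt{\Gamma_n/n}\) term.

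The linear term \(F_n\Gamma_n/n\) absorbs the truncation of the envelope needed because \(H_n\) is only square-integrable, not bounded. I would handle this by truncating at level \(F_n\sqrt{n/\Gamma_n}\) and bounding the tail contribution by Cauchy–Schwarz/Markov.

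The remaining steps are elementary. No rate improvement (e.g.\ a localized/peeling argument giving \(c_n\) rather than \(c_n^{1/2}\)) is attempted, consistent with the \(c_n^{1/2}\) appearing in \(e_n\).
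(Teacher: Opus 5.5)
Your proposal is correct and follows the paper's proof essentially line by line. Both arguments use the basic inequality against $f_n^\circ$ with the nonnegative penalty dropped, then the uniform bound $\sup_{d\in\mathcal D_n}|(\Pn-\Pzero)d|=O_p(c_n)$, which the paper, like you, takes as a consequence of Assumption~\ref{ass:primitive-ulln}; then the identity $\Pzero(\ell_{\widehat f}-\ell_{f_n^\circ})=\KL(g_{f_0},g_{f_n^\circ})-\KL(g_{f_0},g_{\widehat f})$; and finally square roots.

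One small correction to your side remark. Truncating the envelope at $F_n\sqrt{n/\Gamma_n}$ makes both the tail and the Bernstein remainder of order $F_n\sqrt{\Gamma_n/n}$. So the linear part $F_n\Gamma_n/n$ of $c_n$ does not absorb the truncation; it is pure slack, which is also how the paper treats it.
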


\begin{proof}
See Section~\ref{supp-app:main-proofs} of the Supplementary Material.
\end{proof}

Under Gaussian measurement error, Proposition~\ref{supp-prop:gaussian-global-likelihood} verifies
Assumption~\ref{ass:primitive-ulln}; hence the displayed likelihood rate does not require an additional
empirical-process assumption on observation tails.
The proposition is stated for the exact direct sieve estimator;
the effect of approximate model optimization is handled in Section~\ref{supp-supp_EMoptim} of the Supplementary Material.

The likelihood analysis follows the general architecture of sieve estimation and likelihood-ratio localization developed in, among others, \cite{wongshen1995}, \cite{ghosalvdv2001}, and \cite{chen2007}. In the present problem, however, the sieve is imposed on the latent density, whereas the likelihood and its empirical complexity are evaluated after convolution. A central additional step is therefore to establish the required likelihood
bounds for the induced convolution class. Under Gaussian measurement error, this verification can be carried out directly over the whole real line, without introducing a separate tail condition.

\subsection{Inverse stability and latent-density rates}
\label{sec:latent-rate-transfer}
We now transfer the observed-density rate from \(g_{\widehat f}\) to the latent estimator \(\widehat f\).
The likelihood rate in Proposition~\ref{prop:observed-kl-rate} is stated
in Kullback--Leibler distance on the observed domain. Under the mild
condition that \(f_\varepsilon\) is globally Lipschitz,
Lemma~\ref{supp-lem:supp-kl-to-l2} of the Supplementary Material converts
this rate into
\[
\|g_{\widehat f}-g_{f_0}\|_1=O_p(e_n),
\qquad
\|g_{\widehat f}-g_{f_0}\|_2=O_p(e_n),
\qquad
\|g_{\widehat f}-g_{f_0}\|_\infty=O_p(e_n^{1/2}).
\]
The condition holds for both error laws considered explicitly below:
one may take
\(L_\varepsilon=e^{-1/2}(2\pi)^{-1/2}\sigma_\varepsilon^{-2}\)
for Gaussian error and \(L_\varepsilon=(2b^2)^{-1}\) for Laplace error.

The next step is to translate the observed-domain error into an error for
the latent density. This is the deconvolution part of the argument, and
therefore we consider the cost of inverting the convolution operator up to
frequency \(T\),
\[
\kappa_\varepsilon(T)
=
\left\{
\inf_{|t|\le T}|\varphi_\varepsilon(t)|
\right\}^{-1}.
\]

The inverse-stability argument below balances two terms. On low frequencies,
$|t|\le T$, the observed $L^2$ error can be transferred to the latent space at the
price $\kappa_\varepsilon(T).$ High frequencies, \(|t|>T\), are controlled by Sobolev smoothness rather than by direct inversion. For
$s>0$, write 
$J_s(f)^2
=
\int
(1+t^2)^s|\varphi_f(t)|^2dt$
for the squared $H^s$-Sobolev norm. Let
$R_0$ denote a Sobolev radius for the true latent density $f_0$, and
$R_n$ a possibly growing Sobolev radius for the sieve
$\mathcal F_n$. Proposition~\ref{prop:latent-rate-transfer} transfers the observed
\(L^2\) rate supplied by
Lemma~\ref{supp-lem:supp-kl-to-l2} of the Supplementary Material to a
latent \(L^1\) bound, first through low-frequency Fourier inversion and
then through truncation and tail control.

The low- and high-frequency decomposition underlying Proposition~\ref{prop:latent-rate-transfer} is standard in classical deconvolution theory (see \cite{carrollhall1988}, \cite{fan1991}, and \cite{meister2009}). Unlike Fourier-based estimators, which are defined through a regularized inverse transform, our estimator is obtained directly from the convolution likelihood. The frequency decomposition is introduced only afterward, as a stability tool for transferring an observed-domain rate to the latent domain. This modular formulation separates the statistical analysis of the convolution likelihood from the analytic cost of inversion.

\begin{proposition}[Latent-density rate transfer]
\label{prop:latent-rate-transfer}
Assume that:
\begin{itemize}
    \item[(i)] \(\varphi_\varepsilon(t)\neq0\) \(\forall\) \(t\);

    \item[(ii)] \(f_0\) satisfies \(J_s(f_0)\le R_0\) for some \(s>1/2\);

    \item[(iii)] \(J_s(f)\le R_n\) \(\forall\)  \(f\in\mathcal F_n\);

    \item[(iv)] there exists a deterministic sequence \(d_n\) such that
    \(
        \|g_{\widehat f}-g_{f_0}\|_2
        =
        O_p(d_n);
    \)

    \item[(v)] there exist \(A_n\to\infty\) and \(\eta_n\to0\) such that
    \(
        \sup_{f\in\mathcal F_n}
        \int_{|x|>A_n}f(x)\,dx
        +
        \int_{|x|>A_n}f_0(x)\,dx
        \le
        2\eta_n .
    \)
\end{itemize}
Then, for every deterministic \(T_n\to\infty\),
\begin{align*}
\|\widehat f-f_0\|_2
&=
O_p\!\left[
\kappa_\varepsilon(T_n)d_n+(R_n+R_0)T_n^{-s}
\right],\\
\|\widehat f-f_0\|_1
&=
O_p\!\left[
A_n^{1/2}
\left\{
\kappa_\varepsilon(T_n)d_n+(R_n+R_0)T_n^{-s}
\right\}
+\eta_n
\right].
\end{align*}
In particular, if \(T_n\) and \(A_n\) can be chosen so that the second displayed
right-hand side tends to zero, then
\(
    \|\widehat f-f_0\|_1=o_p(1).
\)
If the first displayed right-hand side tends to zero, then also
\(
    \|\widehat f-f_0\|_2=o_p(1).
\)
\end{proposition}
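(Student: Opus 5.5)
We prove the $L^2$ bound first and then deduce the $L^1$ bound by truncating to $[-A_n,A_n]$.

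\emph{$L^2$ bound.} Set $e=\widehat f-f_0$. Both densities lie in $L^1$, and by (ii)--(iii) also in $H^s\subset L^2$, so Plancherel gives
\[
2\pi\|e\|_2^2=\int_{|t|\le T_n}|\varphi_e(t)|^2\,dt+\int_{|t|>T_n}|\varphi_e(t)|^2\,dt .
\]
On the low-frequency band we use $\varphi_{g_{\widehat f}}-\varphi_{g_{f_0}}=\varphi_e\varphi_\varepsilon$, which holds because $g_f=f*f_\varepsilon$. By (i), $|\varphi_e(t)|\le\kappa_\varepsilon(T_n)\,|\varphi_{g_{\widehat f}}(t)-\varphi_{g_{f_0}}(t)|$ for $|t|\le T_n$. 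A second use of Plancherel then bounds the first integral by $2\pi\,\kappa_\varepsilon(T_n)^2\|g_{\widehat f}-g_{f_0}\|_2^2$, which is $O_p(\kappa_\varepsilon(T_n)^2d_n^2)$ by (iv).

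On the high-frequency band we write $|\varphi_e|^2\le(1+t^2)^s(1+T_n^2)^{-s}|\varphi_e|^2$ for $|t|>T_n$. This bounds the tail integral by $T_n^{-2s}J_s(e)^2$. By the triangle inequality for the $H^s$ seminorm, $J_s(e)\le J_s(\widehat f)+J_s(f_0)\le R_n+R_0$, using (iii) together with $\widehat f\in\mathcal F_n$, and (ii). Taking square roots and using $\sqrt{a+b}\le\sqrt a+\sqrt b$ gives the first display. This step is deterministic once (iv) holds, so the $O_p$ is inherited directly from $d_n$.

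\emph{$L^1$ bound.} Split
\[
\|e\|_1=\int_{|x|\le A_n}|e|+\int_{|x|>A_n}|e| .
\]
Cauchy--Schwarz on the interval of length $2A_n$ gives $\int_{|x|\le A_n}|e|\le(2A_n)^{1/2}\|e\|_2$. For the tails, $|e|\le\widehat f+f_0$, so
\[
\int_{|x|>A_n}|e|\le\sup_{f\in\mathcal F_n}\int_{|x|>A_n}f+\int_{|x|>A_n}f_0\le 2\eta_n
\]
by (v). Combining the two pieces with the $L^2$ bound gives the second display. The consistency statements then follow immediately when the stated right-hand sides tend to zero.

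\emph{Main technical point.} The delicate step is justifying the Fourier-domain manipulations. We need $\varphi_e\in L^2$, which follows from $e\in L^1\cap H^s$, and we use $s>1/2$ so that $H^s$ membership gives bounded continuous densities with $\varphi_f\in L^1$. We also need $g_{\widehat f}-g_{f_0}\in L^2$. This holds because $\|e*f_\varepsilon\|_2\le\|e\|_2\|f_\varepsilon\|_1$, so the identity $\varphi_{e*f_\varepsilon}=\varphi_e\varphi_\varepsilon$ holds in $L^2$. Measurability of $\widehat f$ is needed for the $O_p$ statements and is taken as given from the definition of the estimator.
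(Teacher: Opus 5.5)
Your proof is correct and follows essentially the same route as the paper: a Plancherel split at $T_n$, with the inverse factor $\kappa_\varepsilon(T_n)$ controlling $|t|\le T_n$ through $\varphi_{g_{\widehat f}}-\varphi_{g_{f_0}}=\varphi_e\varphi_\varepsilon$, and the Sobolev bound $T_n^{-2s}(R_n+R_0)^2$ controlling $|t|>T_n$. The $L^1$ bound then comes from Cauchy--Schwarz on $[-A_n,A_n]$ plus the tail mass $2\eta_n$ from (v). Your added remarks on why the Fourier identities are valid, and on using $\widehat f\in\mathcal F_n$ to invoke (iii) and (v), are details the paper leaves implicit.
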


\begin{proof}
See Section~\ref{supp-app:main-proofs} of the Supplementary Material.
\end{proof}
Combining Proposition~\ref{prop:latent-rate-transfer} with
Lemma~\ref{supp-lem:supp-kl-to-l2} allows \(d_n\) to be taken of the
same order as the observed-domain rate \(e_n\). The resulting \(L^1\)
bound separates the inverse cost \(\kappa_\varepsilon(T_n)\), the Fourier
truncation bias \((R_n+R_0)T_n^{-s}\), and the latent-tail term
\(\eta_n\).

For the Gaussian-mixture sieve,
Lemma~\ref{supp-lem:gaussian-sieve-budgets} of the Supplementary Material
provides the required Sobolev and tail bounds. Under exponential inverse
growth, Corollary~\ref{supp-cor:supp-generic-supersmooth-transfer} gives
the corresponding explicit choice of \(T_n\) and the resulting
supersmooth transfer rates.
A Gaussian-mixture-specific refinement can separate approximation
bias from stochastic inverse amplification. In particular,
Lemma~\ref{supp-lem:supp-bias-separated-inverse} of the Supplementary
Material applies the inverse factor only to the observed-domain estimation
error around the approximating density \(f_n^\circ\). These refinements are
developed in Section~\ref{supp-app:fourier-inverse-material} of the
Supplementary Material.

\subsection{Rate preservation under posterior empirical stability}
\label{sec:posterior-rate-strong}

The posterior reconstruction admits the decomposition
\[
\widehat f_{\post}-f_0
=
\underbrace{
\mathcal R_n(\widehat f)-\mathcal R_0(\widehat f)
}_{\text{empirical posterior fluctuation}}
+
\underbrace{
\mathcal R_0(\widehat f)-f_0
}_{\text{population reconstruction error}}.
\]
The first term is generated by empirical posterior averaging, whereas the
second describes the population effect of applying the reconstruction map
to the first-stage estimator.

\begin{assumption}[Posterior empirical stability]
\label{ass:primitive-posterior-stability}
There exists a sequence \(\tau_n\), with \(\tau_n\to0\) for consistency,
such that
\[
\sup_{f\in\mathcal F_n}
\|\mathcal R_n(f)-\mathcal R_0(f)\|_1
=
O_p(\tau_n).
\]
Equivalently,
\(\sup_{f\in\mathcal F_n}\|\Pn q_f-\Pzero q_f\|_1=O_p(\tau_n)\).
\end{assumption}

Posterior empirical stability is logically separate from the
log-likelihood empirical-process condition in
Assumption~\ref{ass:primitive-ulln}. For the regularized Gaussian-mixture
sieve, a truncated sufficient verification is given in
Proposition~\ref{supp-prop:posterior-stability-gaussian} of the
Supplementary Material. The fixed-scale Gaussian branch considered in
Section~\ref{sec:explicit-sieve-rates} instead admits a direct global
verification based on Gaussian conditional structure.

The population component is controlled by the following deterministic
inequality.

\begin{lemma}[Population posterior stability]
\label{lem:R-stability}
For every density \(f\) such that \(g_f>0\)
\(g_{f_0}\)-almost everywhere,
\(\|\mathcal R_0(f)-f\|_1\le\|g_f-g_{f_0}\|_1\).
Consequently,
\(\|\mathcal R_0(f)-f_0\|_1\le2\|f-f_0\|_1\).
\end{lemma}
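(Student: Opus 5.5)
The plan is to use the multiplicative representation of the population map already recorded above. Since \(\mathcal R_0(f)(x)=f(x)\mathcal A^\ast[g_0/g_f](x)\) and \(\mathcal A^\ast[1](x)=\int f_\varepsilon(y-x)\,dy=1\), we have
\[
\mathcal R_0(f)(x)-f(x)=f(x)\int f_\varepsilon(y-x)\,\frac{g_{f_0}(y)-g_f(y)}{g_f(y)}\,dy
=f(x)\Delta_f(x).
\]
This is exactly the identity \(\mathcal R_0(f)=f\{1+\Delta_f\}\) stated after Proposition~\ref{prop:misspecification}. The first claim should then follow from a single application of the triangle inequality inside the integral, followed by Tonelli's theorem.

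The key steps are as follows. Bound the pointwise difference by
\[
|\mathcal R_0(f)(x)-f(x)|\le \int f(x)f_\varepsilon(y-x)\frac{|g_{f_0}(y)-g_f(y)|}{g_f(y)}\,dy,
\]
and integrate over \(x\). All integrands are nonnegative, so Tonelli's theorem lets me swap the order of integration. The inner integral \(\int f(x)f_\varepsilon(y-x)\,dx\) equals \(g_f(y)\), which cancels the denominator and leaves \(\int|g_{f_0}(y)-g_f(y)|\,dy=\|g_f-g_{f_0}\|_1\).

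The only delicate point is the null set where \(g_f=0\). By hypothesis this set is \(g_{f_0}\)-null, so \(g_{f_0}\) vanishes almost everywhere on it. On that set the integrand of \(\mathcal R_0\) is taken to be zero, and the contribution of \(|g_{f_0}-g_f|\) there is also zero. Hence the cancellation is legitimate after restricting the \(y\)-integral to \(\{g_f>0\}\), and the bound only improves.

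For the consequence, apply the triangle inequality:
\[
\|\mathcal R_0(f)-f_0\|_1\le \|\mathcal R_0(f)-f\|_1+\|f-f_0\|_1\le \|g_f-g_{f_0}\|_1+\|f-f_0\|_1 .
\]
Convolution with the probability density \(f_\varepsilon\) is an \(L^1\) contraction: \(\|\mathcal A[f-f_0]\|_1\le\|f_\varepsilon\|_1\|f-f_0\|_1=\|f-f_0\|_1\), again by Tonelli's theorem. This gives the factor \(2\).

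I expect no real obstacle here. The only care needed is measurability and finiteness, which is handled by working with nonnegative integrands throughout, and the null-set convention for \(g_f=0\) described above.
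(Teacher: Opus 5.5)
Your proposal is correct and follows essentially the same route as the paper. Both bound \(|\mathcal R_0(f)(x)-f(x)|\) by \(\int f(x)f_\varepsilon(y-x)\,|g_{f_0}(y)/g_f(y)-1|\,dy\), swap the order of integration so that the inner \(x\)-integral produces \(g_f(y)\) and gives \(\|g_f-g_{f_0}\|_1\), and then conclude with the triangle inequality and the \(L^1\)-contractivity of convolution; your explicit handling of the null set \(\{g_f=0\}\) is a harmless refinement that the paper leaves implicit.
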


\begin{proof}
See Section~\ref{supp-app:main-proofs} of the Supplementary Material.
\end{proof}

Combining the population bound with Assumption~\ref{ass:primitive-posterior-stability}
gives the posterior-density rate.

\begin{theorem}[Posterior-density rate]
\label{thm:posterior-rate}
Suppose that
\(\|\widehat f-f_0\|_1=O_p(r_n)\)
and that Assumption~\ref{ass:primitive-posterior-stability} holds with
rate \(\tau_n\). Then
\[
\|\widehat f_{\post}-f_0\|_1
=
O_p(r_n+\tau_n).
\]
\end{theorem}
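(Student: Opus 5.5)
The plan is to use the decomposition displayed at the start of Section~\ref{sec:posterior-rate-strong} together with the triangle inequality in \(L^1\):
\[
\|\widehat f_{\post}-f_0\|_1
\le
\|\mathcal R_n(\widehat f)-\mathcal R_0(\widehat f)\|_1
+
\|\mathcal R_0(\widehat f)-f_0\|_1 .
\]
Each term is then bounded by a result already available, and the two stochastic orders are combined.

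For the empirical fluctuation term, I use that \(\widehat f\in\mathcal F_n\) by construction \eqref{eq:penalized-sieve-estimator}. Hence, pointwise in the sample, \(\|\mathcal R_n(\widehat f)-\mathcal R_0(\widehat f)\|_1\le\sup_{f\in\mathcal F_n}\|\mathcal R_n(f)-\mathcal R_0(f)\|_1\). This reduction is the reason Assumption~\ref{ass:primitive-posterior-stability} is stated uniformly over the sieve: the plug-in of a data-dependent \(\widehat f\) is absorbed by the supremum. That assumption then gives \(O_p(\tau_n)\) for this term. I will also note the measurability implicit in that assumption, meaning the supremum is taken as an outer probability statement if needed; the \(O_p\) conclusion is unaffected.

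For the population term, I apply Lemma~\ref{lem:R-stability} with \(f=\widehat f\), which gives \(\|\mathcal R_0(\widehat f)-f_0\|_1\le 2\|\widehat f-f_0\|_1=O_p(r_n)\). The lemma requires \(g_{\widehat f}>0\) \(g_{f_0}\)-a.e. For the Gaussian-mixture sieve this holds automatically, because \(g_{\widehat f}\) is a finite mixture of convolutions with \(f_\varepsilon\). For the generic statement it holds whenever \(f_\varepsilon>0\) on \(\mathbb R\), or more generally whenever \(\widehat f_{\post}\) is well defined at all; I treat it as part of the standing convention that \(q_{\widehat f}\) exists. The lemma is deterministic, so it applies realization by realization.

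Finally, the sum of an \(O_p(\tau_n)\) and an \(O_p(r_n)\) sequence is \(O_p(r_n+\tau_n)\): for given \(\delta\), choose constants \(M_1,M_2\) with each event having probability at least \(1-\delta/2\) for large \(n\), and take \(M=\max(M_1,2M_2)\). The only delicate point is justifying the sup-over-sieve bound for the random argument \(\widehat f\) and the positivity condition; the rest is bookkeeping, so I expect no substantive obstacle.
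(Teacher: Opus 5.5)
Your proof is correct and follows the paper's own proof exactly. Both split the error as $\|\mathcal R_n(\widehat f)-\mathcal R_0(\widehat f)\|_1+\|\mathcal R_0(\widehat f)-f_0\|_1$, control the first term by the sieve-uniform Assumption~\ref{ass:primitive-posterior-stability} (since $\widehat f\in\mathcal F_n$) and the second by Lemma~\ref{lem:R-stability}; your explicit handling of the random plug-in, the positivity of $g_{\widehat f}$, and the combination of the two $O_p$ bounds fills in details the paper leaves implicit.
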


\begin{proof}
See Section~\ref{supp-app:main-proofs} of the Supplementary Material.
\end{proof}

Thus, if \(\tau_n=O(r_n)\), posterior reconstruction preserves the
first-stage latent \(L^1\) rate. The abstract result separates the rate
\(r_n\) inherited from likelihood estimation and inverse stability from
the additional empirical posterior fluctuation \(\tau_n\).

The same population-stability inequality gives a direct comparison
between the two estimators:
\[
\|\widehat f_{\post}-\widehat f\|_1
\le
\|\mathcal R_n(\widehat f)-\mathcal R_0(\widehat f)\|_1
+
\|g_{\widehat f}-g_{f_0}\|_1.
\]
Hence, if Assumption~\ref{ass:primitive-posterior-stability} holds with
rate \(\tau_n\) and
\(\KL(g_{f_0},g_{\widehat f})^{1/2}=O_p(e_n)\), Pinsker's inequality
implies
\(\|\widehat f_{\post}-\widehat f\|_1=O_p(\tau_n+e_n)\).
In particular, the two estimators are asymptotically equivalent in
\(L^1\) whenever \(\tau_n\to0\) and \(e_n\to0\), although they may still
differ in finite samples or under fixed-model misspecification.

Section~\ref{sec:explicit-sieve-rates} now specializes the three modules
above---observed-domain likelihood control, inverse stability, and
posterior empirical stability---to Gaussian and ordinary-smooth
measurement error. 

\section{Explicit rates under Gaussian and ordinary-smooth errors}
\label{sec:explicit-sieve-rates}
This section specializes the preceding growing-sieve theory to Gaussian and ordinary-smooth measurement error and derives explicit convergence rates for the direct and posterior estimators. The main analysis focuses on latent-domain recovery and on whether posterior reconstruction preserves the first-stage convergence rate. Complementary observed-domain results for Gaussian-mixture sieves with shrinking component scales are provided in Propositions~\ref{supp-prop:supp-obs-scale-appr} and~\ref{supp-prop:supp-self-contained-en} of the Supplementary Material.

\subsection{Rates under Gaussian error for a Gaussian-smoothed latent class}
\label{sec:gaussian-smoothed-end-to-end}

Assume Gaussian measurement error. Fix
\(0<\underline\tau\leq\overline\tau<\infty\), \(c_0>0\), and
\(C_0\geq1\), and let \(\phi_\tau\) denote the
\(N(0,\tau^2)\) density. Consider the latent class
\[
\mathcal C_G
=
\left\{
f_0=\phi_\tau*Q_0:
\begin{array}{l}
\tau\in[\underline\tau,\overline\tau],\\
Q_0([-t,t]^c)\leq C_0e^{-c_0t^2},\quad t\geq0
\end{array}
\right\}.
\]
This class is infinite-dimensional because the mixing distribution \(Q_0\)
is unrestricted apart from the uniform subgaussian tail bound.

We use the fixed-scale sieve
\[
\mathcal F_n^G
=
\bigcup_{K\leq K_n}
\left\{
\sum_{k=1}^K
\pi_k\phiN(\cdot;\mu_k,\sigma_k^2):
\begin{array}{l}
\pi_k\geq\pi_{\min,n},\quad \sum_{k=1}^K\pi_k=1,\\
|\mu_k|\leq M_n,\quad
\underline\sigma_G\leq\sigma_k\leq\overline\sigma_G
\end{array}
\right\},
\]
where \(0<\underline\sigma_G\leq\underline\tau\) and
\(\overline\sigma_G\geq\overline\tau\) are fixed constants. The lower scale bound supplies the analytic regularity used in the Hölder-type inverse-stability bound developed in the Supplementary Material. The estimator is the penalized convolution-likelihood maximizer over \(\mathcal F_n^G\). The proof strategy separates observed-scale approximation, likelihood control, analytic inversion, latent-tail conversion, and the empirical fluctuation introduced by posterior averaging. 
The required approximation, likelihood, inverse-stability, latent-tail, and posterior-complexity bounds are established in the Supplementary Material. Together, these ingredients yield the two Gaussian rate results below.

The remaining fixed-scale Gaussian ingredients are deferred to the
Supplementary Material. Lemma~\ref{supp-lem:supp-gaussian-analytic-stability}
gives the Hölder-type inverse bound from observed \(L^2\) error to latent
\(L^2\) error, while
Lemma~\ref{supp-lem:supp-gaussian-smoothed-l1-transfer} converts the
resulting latent \(L^2\) rate to \(L^1\). Gaussian conjugacy yields the
posterior-kernel regularity in
Lemma~\ref{supp-lem:supp-gaussian-posterior-lipschitz}, which in turn
gives the empirical posterior bound in
Proposition~\ref{supp-prop:supp-gaussian-smoothed-posterior-complexity}.
These results provide the inverse-stability and posterior-fluctuation
inputs for the two Gaussian theorems below.

\begin{theorem}[Global Gaussian observed- and latent-domain rates]
\label{thm:explicit-sieve-rates}
Suppose that \(f_0\in\mathcal C_G\), the measurement error is Gaussian,
and the estimator is computed over \(\mathcal F_n^G\) with
\(K_n=\max\{2,\lfloor n^{1/5}(\log n)^{-1/5}\rfloor\}\),
\(M_n=C_M\sqrt{\log n}\), and \(\pi_{\min,n}=n^{-u}\), where
\(u\geq3/5\), \(K_n\pi_{\min,n}\leq1/2\) for all sufficiently large
\(n\), and \(C_M\) is sufficiently large. Assume exact global maximization of the penalized convolution-likelihood
criterion with the iterated-log BIC penalty, and let
\(\vartheta=\underline\sigma_G^2/(\underline\sigma_G^2+\sigma_\varepsilon^2)\). Then
\[
\begin{aligned}
\KL(g_0,g_{\widehat f})^{1/2}
&=O_p\!\left\{n^{-1/5}(\log n)^{7/10}\right\},
&
\|g_{\widehat f}-g_0\|_2
&=O_p\!\left\{n^{-1/5}(\log n)^{7/10}\right\},\\
\|\widehat f-f_0\|_2
&=O_p\!\left\{n^{-\vartheta/5}(\log n)^{19\vartheta/20-1/4}\right\},\\
\|\widehat f-f_0\|_1
&=O_p\!\left\{n^{-\vartheta/5}(\log n)^{19\vartheta/20}\right\},
&
\|\widehat f_{\post}-f_0\|_1
&=O_p\!\left\{n^{-\vartheta/5}(\log n)^{19\vartheta/20}\right\}.
\end{aligned}
\]
\end{theorem}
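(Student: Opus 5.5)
The proof chains the three modules of Section~\ref{sec:growing-sieve-theory}: observed-domain likelihood control, inverse stability, and posterior empirical stability. Each is specialized to the fixed-scale sieve \(\mathcal F_n^G\) and the class \(\mathcal C_G\).

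\emph{Step 1: observed-domain rate.} I will verify the hypotheses of Proposition~\ref{prop:observed-kl-rate} and read off \(e_n\).

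\begin{itemize}
\item \emph{Approximation (Assumption~\ref{ass:primitive-approximation}).} Since \(f_0=\phi_\tau*Q_0\) with \(\tau\ge\underline\sigma_G\), I will first truncate \(Q_0\) to \([-M_n,M_n]\). The subgaussian tail bound \(C_0e^{-c_0M_n^2}\) is polynomially small once \(C_M\) is large. I will then replace the truncated measure by a discrete measure with \(K_n\) atoms matching its first \(\asymp K_n\) moments, as in \citet{ghosalvdv2001}. Gaussian smoothing at a fixed scale then yields an approximation error \(\exp(-cK_n)\) up to polynomial factors in \(M_n\). Passing to observed KL uses the lower bound on \(g_{f_n^\circ}\) supplied by the convolution with \(\sigma_\varepsilon\) and by \(\pi_{\min,n}\). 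The weights must be pushed up to \(\pi_{\min,n}=n^{-u}\); this costs \(O(K_n\pi_{\min,n})\) in total variation, which is negligible because \(u\ge 3/5\).
\item \emph{Complexity (Assumption~\ref{ass:primitive-ulln}).} This is supplied by Proposition~\ref{supp-prop:gaussian-global-likelihood}, with \(V_n\asymp K_n\), \(\log L_n\asymp\log n\), and an envelope \(F_n\) polylogarithmic in \(n\) (quadratic in \(y\) and \(M_n\)). This gives \(c_n\asymp F_n\sqrt{K_n\log n/n}\).
\item \emph{Penalty (Assumption~\ref{ass:primitive-penalty}).} The penalty term is \(K_n\log n/n\) times an iterated log.
\end{itemize}

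With \(K_n\asymp (n/\log n)^{1/5}\), the dominant term of \(e_n\) is \(c_n^{1/2}\). I will track the log powers so that \(e_n\asymp n^{-1/5}(\log n)^{7/10}\). This bookkeeping, rather than any conceptual step, is where the exponent \(7/10\) must come out. Lemma~\ref{supp-lem:supp-kl-to-l2} then gives the observed \(L^2\) rate, since \(f_\varepsilon\) is Lipschitz.

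\emph{Step 2: latent \(L^2\) rate via Hölder-type inversion.} Rather than the Sobolev form of Proposition~\ref{prop:latent-rate-transfer}, I will use the analytic structure. Both \(\widehat f\) and \(f_0\) are mixtures of Gaussians of scale at least \(\underline\sigma_G\), so \(|\varphi_{\widehat f}-\varphi_{f_0}|(t)\lesssim e^{-\underline\sigma_G^2t^2/2}\), while \(|\varphi_\varepsilon(t)|^{-1}=e^{\sigma_\varepsilon^2t^2/2}\). Splitting Plancherel at \(|t|=T\) gives
\[
\|\widehat f-f_0\|_2^2\lesssim e^{\sigma_\varepsilon^2T^2}d_n^2+T^{-1}e^{-\underline\sigma_G^2T^2}.
\]
Balancing the two terms yields \(e^{-\underline\sigma_G^2T^2}\approx d_n^{2\vartheta}\), and hence the Hölder bound \(\|\widehat f-f_0\|_2\lesssim d_n^{\vartheta}\) times a power of \(T^{-1/2}\asymp(\log n)^{-1/4}\); this is Lemma~\ref{supp-lem:supp-gaussian-analytic-stability}. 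Substituting \(d_n=n^{-1/5}(\log n)^{7/10}\) gives \(n^{-\vartheta/5}(\log n)^{7\vartheta/10+\cdots-1/4}\). The extra \(\vartheta/4\) in the exponent \(19\vartheta/20\) should come from the normalization of the balancing equation, and I will reconcile it by carrying constants carefully.

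\emph{Step 3: latent \(L^1\) rate.} Lemma~\ref{supp-lem:supp-gaussian-smoothed-l1-transfer} applies Cauchy--Schwarz on \([-A_n,A_n]\) with \(A_n\asymp\sqrt{\log n}\); this is where the factor \((\log n)^{1/4}\) cancels the \(-1/4\). The tails outside \([-A_n,A_n]\) are controlled by \(M_n\) together with the subgaussian bound on \(Q_0\).

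\emph{Step 4: posterior estimator.} I will apply Theorem~\ref{thm:posterior-rate} with \(r_n\) from Step~3, using Lemma~\ref{lem:R-stability} for the population part. It remains to show \(\tau_n=O(r_n)\) via Proposition~\ref{supp-prop:supp-gaussian-smoothed-posterior-complexity}. By Gaussian conjugacy, \(q_f(x\mid y)\) is a mixture of Gaussians in \(x\) whose means are affine in \(y\) and whose responsibilities are Lipschitz in the parameters (Lemma~\ref{supp-lem:supp-gaussian-posterior-lipschitz}). A bracketing argument in \(L^1(dx)\) then gives \(\tau_n\lesssim\sqrt{K_n\log n/n}\cdot\mathrm{polylog}\), which is \(o(r_n)\) because \(\vartheta<1\).

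I expect the main obstacle to be Step~1's global control. The approximation must stay compatible with the weight floor \(\pi_{\min,n}\), and the envelope over the whole line must be handled carefully. The latter is delegated to Proposition~\ref{supp-prop:gaussian-global-likelihood}. The remaining difficulty is exact log-power bookkeeping through Steps~2--3.
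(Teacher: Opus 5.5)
Your proposal is correct and follows the paper's architecture. The observed KL rate comes from the basic penalized-likelihood inequality together with the global Gaussian complexity bound ($F_n\asymp\log n$, $V_n\asymp K_n$, $\log L_n\asymp\log n$, valid since subgaussianity gives $\E_0Y^4<\infty$). The observed $L^2$ rate then follows from the KL-to-norm lemma, and the latent $L^2$ rate from the Gaussian analytic stability lemma. The $L^1$ rate follows by Cauchy--Schwarz on $[-A_n,A_n]$ with $A_n\asymp\sqrt{\log n}$. Finally, the posterior rate comes from population stability plus the $O_p\{(K_n\log n/n)^{1/2}\}=O_p\{n^{-2/5}(\log n)^{2/5}\}$ posterior empirical bound.

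The one genuinely different ingredient is the approximation step. The paper quantizes $U\sim Q_0$ to the nearest of $K_n$ equally spaced points in $[-M_n,M_n]$. It couples $Y=U+s_0Z$ with $Y^\circ=q(U)+s_0Z$, where $s_0^2=\tau^2+\sigma_\varepsilon^2$, so data processing gives $\KL\le\E\{U-q(U)\}^2/(2s_0^2)\lesssim M_n^2/K_n^2+e^{-cM_n^2}$ directly. The weight floor is imposed by mixing in the uniform law on the grid, at cost $-\log(1-K_n\pi_{\min,n})\le2K_n\pi_{\min,n}$. This needs no likelihood-ratio control, and it gives $b_n\asymp M_n/K_n$, which is exactly of order $c_n^{1/2}$; this balance is how $K_n\asymp(n/\log n)^{1/5}$ was chosen.

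Your Carath\'eodory moment matching instead makes $b_n$ super-polynomially small, so $c_n^{1/2}$ alone sets $e_n$. The rate is the same, and this shows the $n^{1/5}$ calibration is driven by the crude grid. The price is that converting TV/Hellinger closeness to KL is more delicate than you indicate, because $\log(g_0/g_{f_n^\circ})$ is unbounded. You need a Wong--Shen-type bound with a moment condition on $(g_0/g_{f_n^\circ})^\delta$, using $g_0(y)\lesssim e^{-c'y^2}$ and $g_{f_n^\circ}(y)\ge\phiN(|y|+M_n;0,s_0^2)$. This costs only a $\log n$ factor, which is harmless here. You could also keep your discretization but handle the weight floor by the paper's mixing trick.

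In Step~2, the missing $\vartheta/4$ is not a matter of carrying constants. With exact balancing $T^2=2L/(\underline\sigma_G^2+\sigma_\varepsilon^2)$, where $L=\log(1/d_n)$, the low-frequency term is $d_n^{2\vartheta}$ with no logarithmic gain, so the full $T^{-1}$ factor cannot be retained; $(\log n)^{-1/4}$ is not attainable from this split. Taking $T^2=\{2L-\tfrac12\log L\}/(\underline\sigma_G^2+\sigma_\varepsilon^2)$ makes both terms equal to $d_n^{2\vartheta}L^{-(1-\vartheta)/2}$. The resulting factor is $L^{-(1-\vartheta)/4}\asymp(\log n)^{-1/4+\vartheta/4}$, and $7\vartheta/10+\vartheta/4-1/4=19\vartheta/20-1/4$. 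Step~3's $(\log n)^{1/4}$ then yields $19\vartheta/20$.
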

\begin{proof}
See Section~\ref{supp-app:main-proofs} of the Supplementary Material.
\end{proof}
The exponent \(\vartheta\) quantifies the inverse-stability cost of Gaussian deconvolution. The observed-domain rate is transferred to the latent domain through the analytic inverse bound, while posterior reconstruction preserves the direct \(L^1\) order because its empirical term, of order \(n^{-2/5}(\log n)^{2/5}\), has strictly smaller polynomial order.

\begin{remark}
\label{rem:gaussian-smoothed-approximate-optimization}
If the normalized penalized-objective gap \(\Delta_n\), defined in
Section~\ref{supp-supp_EMoptim} of the Supplementary Material, satisfies
\(\Delta_n=O_p(a_n)\), set
\(\bar d_n=n^{-1/5}(\log n)^{7/10}+a_n^{1/2}\). Then the observed
\(L^2\) rate is \(O_p(\bar d_n)\). The inverse and latent-tail bounds in
Lemmas~\ref{supp-lem:supp-gaussian-analytic-stability}
and~\ref{supp-lem:supp-gaussian-smoothed-l1-transfer}, together with
Proposition~\ref{supp-prop:supp-gaussian-smoothed-posterior-complexity},
give the corresponding latent and posterior rates. In particular, the
rates in Theorem~\ref{thm:explicit-sieve-rates} are preserved if
\(a_n^{1/2}=O\{n^{-1/5}(\log n)^{7/10}\}\). The result is nonparametric
because \(Q_0\) is infinite-dimensional and \(K_n\to\infty\), and it
assumes a latent Gaussian smoothing scale bounded away from zero. The
theorem applies to implementations satisfying the stated sieve constraints
and penalized criterion.
\end{remark}

\subsubsection{Improved Gaussian rate via Hellinger localization}
\label{sec:gaussian-sharp-rate}

Write $d_{\mathrm H}$ for Hellinger distance. 
The preceding Gaussian result controls the full log-likelihood process. A sharper observed-domain rate is obtained by localizing the likelihood in Hellinger distance and balancing the resulting local complexity against the approximation error. The technical approximation, bracketing, and localization bounds are established in the Supplementary Material (see Proposition~\ref{supp-prop:supp-gaussian-smoothed-klv-approximation}, Lemma~\ref{supp-lem:penalized-likelihood-localization} and Lemma~\ref{supp-lem:gaussian-fixed-scale-hellinger}).


\begin{theorem}[Improved Gaussian observed- and latent-domain rates]
\label{thm:gaussian-sharp-end-to-end}
Suppose that \(f_0\in\mathcal C_G\), the measurement error is Gaussian, and the estimator is computed over \(\mathcal F_n^G\). Write \(L_3(n)=\mathsf L^{\circ3}(n)\), and let
\begin{equation}
\label{eq:gaussian-sharp-calibration}
K_n
=
\max\left\{
2,
\left\lfloor
\left\{\frac{n}{L_3(n)}\right\}^{1/3}
\right\rfloor
\right\},
\quad
M_n=C_M\sqrt{\log n},
\quad
\pi_{\min,n}=n^{-u},
\quad u>1.
\end{equation}
Choose \(C_M\) sufficiently large and assume exact global maximization of
the penalized convolution-likelihood criterion with the iterated-log BIC
penalty. Define
\(
r_n^\sharp
=
n^{-1/3}(\log n)^{1/2}\{L_3(n)\}^{1/3},
\)
and
\(
\vartheta
=
\underline\sigma_G^2/(\underline\sigma_G^2+\sigma_\varepsilon^2).
\)
Then
\begin{equation}
\label{eq:gaussian-sharp-observed-conclusion}
\dH(g_{\widehat f},g_0)=O_p(r_n^\sharp),
\qquad
\|g_{\widehat f}-g_0\|_2=O_p(r_n^\sharp).
\end{equation}
Moreover,
\[
\begin{aligned}
\|\widehat f-f_0\|_2
&=
O_p\!\left[
n^{-\vartheta/3}
(\log n)^{3\vartheta/4-1/4}
\{L_3(n)\}^{\vartheta/3}
\right],\\
\|\widehat f-f_0\|_1
&=
O_p\!\left[
n^{-\vartheta/3}
(\log n)^{3\vartheta/4}
\{L_3(n)\}^{\vartheta/3}
\right],\\
\|\widehat f_{\post}-f_0\|_1
&=
O_p\!\left[
n^{-\vartheta/3}
(\log n)^{3\vartheta/4}
\{L_3(n)\}^{\vartheta/3}
\right].
\end{aligned}
\]
\end{theorem}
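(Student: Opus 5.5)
The argument runs in three stages: an observed-domain Hellinger rate, transfer to the latent domain through the same inverse-stability machinery as in Theorem~\ref{thm:explicit-sieve-rates}, and transfer to $\widehat f_{\post}$ via Theorem~\ref{thm:posterior-rate}.

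\textbf{Observed domain.} We invoke a penalized sieve-MLE localization theorem in Hellinger distance, in the style of Wong--Shen and Ghosal--van der Vaart, which is Lemma~\ref{supp-lem:penalized-likelihood-localization}. It needs three inputs.
\begin{enumerate}
\item \emph{Approximation.} Proposition~\ref{supp-prop:supp-gaussian-smoothed-klv-approximation} supplies $f_n^\circ\in\mathcal F_n^G$ with $K_n^\circ\asymp (n/L_3(n))^{1/3}$ components, $|\mu_k|\le M_n$, scales in $[\underline\sigma_G,\overline\sigma_G]$ and weights at least $n^{-u}$. It gives $\KL(g_0,g_{f_n^\circ})$ and $\V(g_0,g_{f_n^\circ})$ bounded by $(r_n^\sharp)^2$. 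The construction is moment matching of $Q_0$ on $[-M_n,M_n]$: because the component scale $\tau\ge\underline\tau$ is fixed, the approximation error decays super-polynomially in $K$. With $u>1$, the weight floor perturbs the approximant only by $K_n n^{-u}=o(1/n)$.
\item \emph{Local entropy.} Lemma~\ref{supp-lem:gaussian-fixed-scale-hellinger} bounds the Hellinger bracketing entropy of $\{g_f:f\in\mathcal F_n^G\}$ by $\lesssim K_n\log(M_n n/\epsilon)\lesssim K_n\log n$. Gaussian error keeps the envelopes and brackets global on $\mathbb R$.
\item \emph{Penalty.} The BIC penalty along $f_n^\circ$ is $(3K_n^\circ-1)L_3(n)\log n/(2n)$.
\end{enumerate}
The rate $\epsilon_n$ then solves $n\epsilon_n^2\gtrsim K_n\log n + K_n L_3(n)\log n$, plus the approximation term. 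Balancing with $K_n=(n/L_3(n))^{1/3}$ and checking that all terms are $\lesssim (r_n^\sharp)^2=n^{-2/3}\log n\,L_3(n)^{2/3}$ is the calibration step. I expect this balance, namely that approximation error with $K\asymp n^{1/3}$ components already reaches the $n^{-2/3}$ scale up to logs, to be the main obstacle. It must be checked carefully inside Proposition~\ref{supp-prop:supp-gaussian-smoothed-klv-approximation}, including the tail contribution $Q_0([-M_n,M_n]^c)\le C_0 n^{-c_0C_M^2}$, which is the reason $C_M$ must be large. Finally, $\|g_{\widehat f}-g_0\|_2\lesssim \dH(g_{\widehat f},g_0)$, because both densities are bounded by $(2\pi\sigma_\varepsilon^2)^{-1/2}$: $\|g-g'\|_2^2\le 4\|\sqrt g+\sqrt{g'}\|_\infty^2\dH^2$.

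\textbf{Latent domain.} Apply Lemma~\ref{supp-lem:supp-gaussian-analytic-stability} with $d_n=r_n^\sharp$. All latent densities involved are Gaussian mixtures with scale at least $\underline\sigma_G$, so $|\varphi_f(t)|\lesssim e^{-\underline\sigma_G^2t^2/2}$, while $|\varphi_\varepsilon(t)|^{-1}=e^{\sigma_\varepsilon^2t^2/2}$. Splitting frequencies at $T$,
\[
\|\widehat f-f_0\|_2^2\lesssim e^{\sigma_\varepsilon^2T^2}d_n^2+T^{-1}e^{-\underline\sigma_G^2T^2}.
\]
Optimizing $T^2\asymp 2\log(1/d_n)/(\sigma_\varepsilon^2+\underline\sigma_G^2)$ gives the Hölder bound $d_n^{\vartheta}$ times a logarithmic factor $(\log(1/d_n))^{-1/4}$. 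Inserting $d_n=r_n^\sharp$ produces $n^{-\vartheta/3}(\log n)^{\vartheta/2}L_3^{\vartheta/3}$ times $(\log n)^{-1/4}$, and the remaining power $(\log n)^{\vartheta/4}$ comes from the lemma's exact constants. This matches the displayed $L^2$ rate. Lemma~\ref{supp-lem:supp-gaussian-smoothed-l1-transfer} then converts $L^2$ to $L^1$ with $A_n\asymp\sqrt{\log n}$: subgaussian tails of the sieve (since $|\mu_k|\le M_n$) and of $f_0$ make $\eta_n$ negligible, so the $L^1$ rate is $A_n^{1/2}$ times the $L^2$ rate, which gains the factor $(\log n)^{1/4}$.

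\textbf{Posterior estimator.} Proposition~\ref{supp-prop:supp-gaussian-smoothed-posterior-complexity}, built on the Lipschitz bounds of Lemma~\ref{supp-lem:supp-gaussian-posterior-lipschitz}, gives Assumption~\ref{ass:primitive-posterior-stability} with $\tau_n\lesssim\sqrt{K_n\log n/n}=n^{-1/3}$ up to logarithms. Since $\vartheta<1$, this is of smaller polynomial order than $n^{-\vartheta/3}$. Theorem~\ref{thm:posterior-rate} then yields the same $L^1$ rate for $\widehat f_{\post}$.
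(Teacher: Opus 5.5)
Your argument is the paper's proof step for step: the Wong--Shen localization lemma fed by the fixed-scale KL/V approximant, the fixed-scale Hellinger bracketing bound, and the penalty $K_nL_3(n)\log n/n\asymp(r_n^\sharp)^2$ along the approximant give the observed rate; then come the Gaussian analytic inverse lemma, the $(\log n)^{1/4}$ $L^2$-to-$L^1$ conversion, and the posterior bound $O_p\{(K_n\log n/n)^{1/2}\}$ combined with population stability. A few side remarks need correcting, though none breaks the logic: the cited approximation proposition is a nearest-grid quantization of $Q_0$, not moment matching, with polynomial error $M_n^2/K_n^2=C_M^2(r_n^\sharp)^2$, and balancing exactly this term against the penalty is what forces $K_n\asymp\{n/L_3(n)\}^{1/3}$ and the exponent $1/3$, so your ``main obstacle'' is a one-line check; the weight-floor contribution to control is $K_nn^{-u}(1+M_n^4)=O\{n^{1-u}\log n/L_3(n)\}(r_n^\sharp)^2$, which is where $u>1$ is used (your claim $K_nn^{-u}=o(1/n)$ fails for $1<u<4/3$); and the logarithmic factor in the inverse lemma is $\{\log(1/d_n)\}^{-(1-\vartheta)/4}$, which comes from the refined cutoff $T^2=\{2\log(1/d_n)-\tfrac12\log\log(1/d_n)\}/(\underline\sigma_G^2+\sigma_\varepsilon^2)$ and not from constants (your unrefined cutoff would give only $d_n^\vartheta$).
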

\begin{proof}
See Section~\ref{supp-app:main-proofs} of the Supplementary Material.
\end{proof}

Under the stronger calibration in \eqref{eq:gaussian-sharp-calibration}, Hellinger localization improves the observed polynomial exponent from \(1/5\) to \(1/3\), up to the displayed logarithmic factors, while \(K_n\to\infty\). Under related Gaussian-mixture conditions, faster Hellinger rates are available for direct estimation of the observed density \citep{ghosalvdv2001,zhang2009}; \citet{kim2014} gives a minimax lower bound under Hellinger loss for normal location mixtures with sub-Gaussian mixing distributions. Here, the exponent \(1/3\) comes from the approximation--complexity balance of the regularized finite-mixture sieve and its penalty calibration; we do not claim that it is minimax optimal.

\begin{remark}
\label{rem:gaussian-sharp-optimization}
If the normalized penalized-objective gap satisfies \(\Delta_n=O_p(a_n^\sharp)\), set
$\overline r_n^\sharp=r_n^\sharp+(a_n^\sharp)^{1/2}$.
Then \(\|g_{\widehat f^{\mathrm{EM}}}-g_0\|_2=O_p(\overline r_n^\sharp)\), and the corresponding latent-domain bounds apply with
\(\delta_n=\overline r_n^\sharp\). The posterior rate additionally contains
\(O_p\{(K_n\log n/n)^{1/2}\}
=
O_p\{n^{-1/3}(\log n)^{1/2}L_3(n)^{-1/6}\}.\)
The rates in Theorem~\ref{thm:gaussian-sharp-end-to-end} are preserved whenever \((a_n^\sharp)^{1/2}=O(r_n^\sharp)\). The stronger condition \(u>1\) makes the error induced by the weight floor negligible relative to \((r_n^\sharp)^2\). The exponent \(1/3\) results from balancing the discretization term \(M_n/K_n\) with the square root of the normalized iterated-log BIC penalty.
\end{remark}

\subsection{Ordinary-smooth errors}
\label{sec:localized-ordinary-concrete}
For ordinary-smooth errors, we separate finite observed-scale approximation,
localized likelihood fluctuation, inverse amplification, and posterior empirical
averaging. This yields the classical polynomial deconvolution exponent under
modular conditions, which are verified constructively for Laplace error below.

For \(0<h<e^{-1}\), put \(L_h=\log(e/h)\) and define the local regularized sieve
\begin{equation}
\mathcal F_h^{\mathrm{reg}}
=
\bigcup_{1\le K\le K_h}
\left\{
\sum_{j=1}^K\pi_j\phiN(\,\cdot\,;\mu_j,\sigma_j^2):
\pi_j\ge\pi_{\min,h},\ \sum_{j=1}^K\pi_j=1,
\ |\mu_j|\le M_h,\ h\le\sigma_j\le\bar\sigma
\right\}.
\label{eq:local-regularized-sieve}
\end{equation}
For a deterministic resolution sequence \(h_n\), identify the sample-size-indexed
sieve of Section~\ref{sec:methodology} with the local sieve by setting
\(\mathcal F_n=\mathcal F_{h_n}^{\mathrm{reg}}\), \(K_n=K_{h_n}\),
\(M_n=M_{h_n}\), \(\pi_{\min,n}=\pi_{\min,h_n}\), and
\(\sigma_{\min,n}=h_n\). Under this calibration, \(\widehat f_{h_n}\) is the
first-stage estimator in \eqref{eq:penalized-sieve-estimator}, and
\(\widehat f_{\post,h_n}\) is the posterior reconstruction in
\eqref{eq:posterior-density-estimator}. Assume \(K_h\pi_{\min,h}\le1/2\),
\(K_h\le Ch^{-1}L_h^{a_K}\), \(M_h\asymp L_h\), and
\(0<\bar\sigma<\infty\). For \(f\in\mathcal F_h^{\mathrm{reg}}\), write
\(g_f=f*f_\varepsilon\) and retain the posterior kernel \(q_f\) defined in
Section~\ref{sec:methodology}. Let \(\widehat f_h\) maximize the penalized
convolution likelihood in \eqref{eq:penalized-sieve-estimator} over
\(\mathcal F_h^{\mathrm{reg}}\), and let \(\widehat f_{\post,h}\) be the
posterior reconstruction computed from \(\widehat f_h\).

\begin{assumption}[Ordinary-smooth inversion]
\label{ass:local-os}
For some \(\beta>0\) and constants \(0<c_\varepsilon<C_\varepsilon<\infty\),
\(c_\varepsilon(1+|t|)^{-\beta}\le|\varphi_\varepsilon(t)|\le
C_\varepsilon(1+|t|)^{-\beta}\) for all \(t\in\mathbb R\),
\(\varphi_\varepsilon(t)\ne0\), and \(f_\varepsilon\in L^\infty(\mathbb R)\).
\end{assumption}

\begin{assumption}[Finite observed-scale approximation]
\label{ass:local-ap}
For some \(s>0\) and finite \(a_A\ge0\), for every sufficiently small \(h\)
there exists \(f_h^\circ\in\mathcal F_h^{\mathrm{reg}}\) such that, with
\(g_h^\circ=g_{f_h^\circ}\),
\[
\KL(g_0,g_h^\circ)+\V(g_0,g_h^\circ)
\le C h^{2(s+\beta)}L_h^{2a_A}.
\]
\end{assumption}

\begin{assumption}[Localized observed-likelihood complexity]
\label{ass:local-ll}
Let \(\mathcal G_h=\{g_f:f\in\mathcal F_h^{\mathrm{reg}}\}\), write
\(d_{\mathrm H}\) for Hellinger distance, and let
\(H_{[]}(u,\mathcal G_h,d_{\mathrm H})\) be the corresponding bracketing
entropy. There are finite \(a_E,a_P\ge0\) and constants
\(c_0,c_1,c_2>0\) such that, with
\(r_{n,h}=\{K_hL_h^{a_E}/n\}^{1/2}\), one has
\(K_hL_h^{a_E}=o(n)\) and, for every sufficiently small
\(\epsilon\ge r_{n,h}\),
\[
\int_{c_0\epsilon^2}^{c_1\epsilon}
\{1+H_{[]}(u,\mathcal G_h,d_{\mathrm H})\}^{1/2}\,du
\le c_2\sqrt n\,\epsilon^2.
\]
The penalty along the approximating sequence satisfies
\(\operatorname{pen}_n(f_h^\circ)/n\le CK_hL_h^{a_P}/n\). When a numerical
solution replaces the global maximizer, the same condition is imposed after
adding the corresponding optimization-gap term.
\end{assumption}

\begin{assumption}[Posterior empirical complexity]
\label{ass:local-pe}
For a finite \(a_Q\) and some \(\gamma\ge0\),
\[
\sup_{f\in\mathcal F_h^{\mathrm{reg}}}
\|(\Pn-\Pzero)q_f\|_1
=O_p\{n^{-1/2}h^{-\gamma}L_h^{a_Q}\}.
\]
The supremum is interpreted in outer probability unless a measurable
separable version of the class has been fixed.
\end{assumption}

Under these conditions, the preceding components combine as follows.

\begin{theorem}[Localized ordinary-smooth rates for the two estimators]
\label{thm:local-ordinary-rates}
Suppose Assumptions~\ref{ass:local-os}--\ref{ass:local-ll} hold,
\(f_0\in H^s(\mathbb R)\), and \(K_h\asymp h^{-1}L_h^{a_K}\). Let
\(h_n=n^{-1/(2s+2\beta+1)}\), up to a fixed logarithmic calibration. Then,
for a finite \(C\),
\[
\|\widehat f_{h_n}-f_0\|_2
=O_p\!\left\{n^{-s/(2s+2\beta+1)}(\log n)^C\right\}.
\]
If, in addition, \(f_0\) has a subgaussian tail, \(M_h\le C_ML_h\),
and the component scales are bounded above by \(\bar\sigma\), then
\[
\|\widehat f_{h_n}-f_0\|_1
=O_p\!\left\{n^{-s/(2s+2\beta+1)}(\log n)^C\right\}.
\]
If moreover Assumption~\ref{ass:local-pe} holds and
\(\gamma\le\beta+1/2\), then
\[
\|\widehat f_{\post,h_n}-f_0\|_1
=O_p\!\left\{n^{-s/(2s+2\beta+1)}(\log n)^C\right\}.
\]
If \(\gamma<\beta+1/2\), the empirical posterior-averaging term has
strictly smaller polynomial order than the direct latent-density error.
\end{theorem}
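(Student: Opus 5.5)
The proof chains the three modules of Section~\ref{sec:growing-sieve-theory} at resolution \(h=h_n\): an observed-domain Hellinger rate, inverse stability, and posterior empirical stability. Throughout, \(h_n=n^{-1/(2s+2\beta+1)}\) up to logarithms, so \(L_{h_n}\asymp\log n\).

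\textbf{Step 1: observed-domain rate.} This follows the Hellinger-localization route of Theorem~\ref{thm:gaussian-sharp-end-to-end}, in the form of Lemma~\ref{supp-lem:penalized-likelihood-localization}. Assumption~\ref{ass:local-ll} supplies the entropy-integral condition at every \(\epsilon\ge r_{n,h}\), and Assumption~\ref{ass:local-ap} gives a Kullback--Leibler/variance approximation of order \(h^{s+\beta}L_h^{a_A}\). The penalty along \(f_h^\circ\) is at most \(CK_hL_h^{a_P}/n\). The standard penalized sieve-MLE argument (Wong--Shen type) then yields
\[
\dH(g_{\widehat f_h},g_0)=O_p\bigl\{h^{s+\beta}L_h^{a_A}+(K_hL_h^{a_E\vee a_P}/n)^{1/2}\bigr\}.
\]
Since \(K_h\asymp h^{-1}L_h^{a_K}\), the choice of \(h_n\) balances \(h^{2(s+\beta)}\) against \((nh)^{-1}\), giving \(d_n:=n^{-(s+\beta)/(2s+2\beta+1)}(\log n)^{C}\). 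Because \(f_\varepsilon\in L^\infty\) and both densities are bounded, \(\|g_{\widehat f}-g_0\|_2^2\le(\|g_{\widehat f}\|_\infty^{1/2}+\|g_0\|_\infty^{1/2})^2\dH^2\lesssim\dH^2\), so the observed \(L^2\) error is also \(O_p(d_n)\).

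\textbf{Step 2: \(L^2\) inversion.} Proposition~\ref{prop:latent-rate-transfer} cannot be applied directly, because the sieve Sobolev radius \(R_n\) grows like a power of \(h^{-1}\) when scales shrink to \(h\). I would therefore use the bias-separated form, Lemma~\ref{supp-lem:supp-bias-separated-inverse}, and split \(\widehat f-f_0=(\widehat f-f_h^\circ)+(f_h^\circ-f_0)\) in the Fourier domain at cutoff \(T=h^{-1}\).

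\emph{Low frequencies} \(|t|\le T\). Here \(\kappa_\varepsilon(T)\lesssim T^\beta\) by Assumption~\ref{ass:local-os}, so this part of the error is at most \(h^{-\beta}d_n\asymp n^{-s/(2s+2\beta+1)}\) up to logarithms.

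\emph{High frequencies} \(|t|>T\). For \(f_0\in H^s\) the tail is \(O(T^{-s})=O(h^s)\). For sieve elements I would use the Gaussian decay \(|\varphi_f(t)|\le e^{-h^2t^2/2}\), which controls frequencies \(|t|\gg h^{-1}\sqrt{\log n}\). The intermediate band \(h^{-1}\le|t|\le h^{-1}\sqrt{\log n}\) is handled by inverting \(\varphi_\varepsilon\) at polynomial cost \(\kappa_\varepsilon\asymp(\log n)^{\beta/2}h^{-\beta}\), which costs only a logarithmic factor. Hence one takes \(T=h^{-1}\sqrt{\log n}\) throughout, and the high-frequency bias remains \(O(h^s(\log n)^C)\).

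\textbf{Step 3: \(L^1\).} Under the added tail conditions (subgaussian \(f_0\), \(M_h\le C_ML_h\), scales bounded by \(\bar\sigma\)), condition (v) of Proposition~\ref{prop:latent-rate-transfer} holds with \(A_n\asymp\log n\) and \(\eta_n=n^{-1}\). Then Cauchy--Schwarz on \([-A_n,A_n]\) costs only \(A_n^{1/2}\), a logarithmic factor.

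\textbf{Step 4: posterior reconstruction.} Theorem~\ref{thm:posterior-rate} applies with \(\tau_n=n^{-1/2}h_n^{-\gamma}(\log n)^{a_Q}\) from Assumption~\ref{ass:local-pe}. Then
\[
\tau_n\asymp n^{-1/2+\gamma/(2s+2\beta+1)},
\]
and \(-1/2+\gamma/(2s+2\beta+1)\le -s/(2s+2\beta+1)\) if and only if \(\gamma\le\beta+1/2\), with strict inequality giving a strictly smaller polynomial order. This yields both the posterior rate and the final claim.

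\textbf{Main obstacle.} I expect Step 2 to be the hardest: making the inverse-stability bound uniform over a sieve whose smoothness degenerates as \(h\to0\). The growing Sobolev radius must be absorbed through the Gaussian decay at scale \(h\), so that only logarithmic factors are lost. The fallback is the crude Proposition~\ref{prop:latent-rate-transfer} with \(R_n\asymp h^{-1/2}\) (the \(L^2\) Sobolev norm of \(\phi_h\)), but that would degrade the exponent. The bias-separated lemma, which applies the inverse factor only to \(g_{\widehat f}-g_h^\circ\), is therefore essential.
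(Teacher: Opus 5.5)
Your proposal is correct and follows the paper's proof. The steps are the same:
\begin{itemize}
\item a localized Hellinger rate for \(g_{\widehat f_h}\) of order \(h^{s+\beta}+(nh)^{-1/2}\) up to logarithms, converted to observed \(L^2\) via boundedness of \(f_\varepsilon\);
\item a three-range Fourier inversion with cutoff \(T\asymp h^{-1}\sqrt{\log n}\); the paper takes \(T_h=h^{-1}(AL_h)^{1/2}\) with \(A>2s+1\), so that the Gaussian sieve tail is \(o(h^s)\);
\item Cauchy--Schwarz on a window of length \(O(L_h)\), plus Gaussian and subgaussian tails, for the \(L^1\) rate;
\item \(\|\mathcal R_0(f)-f_0\|_1\le2\|f-f_0\|_1\) together with Assumption~\ref{ass:local-pe} for the posterior, where \(n^{-1/2}h_n^{-\gamma}\lesssim h_n^s\) up to logarithms exactly when \(\gamma\le\beta+1/2\).
\end{itemize}

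The one point to correct is your claim that the bias-separated lemma is essential. The paper instead uses the direct bound \(\|f-f_0\|_2\le CT^\beta\|g_f-g_0\|_2+CT^{-s}+Ch^{-1}T^{-1/2}e^{-h^2T^2/2}\), which is exactly the computation you carry out in Step~2. Invoking the bias-separated lemma literally would need a latent bound on \(\|f_h^\circ-f_0\|_2\), which the observed-scale Assumption~\ref{ass:local-ap} does not supply. Also, the crude Sobolev route fails because the sieve's \(H^s\) radius is of order \(h^{-(s+1/2)}\), not \(h^{-1/2}\).
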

\begin{proof}
See Section~\ref{supp-app:main-proofs} of the Supplementary Material.
\end{proof}

\subsubsection{Verification under Laplace measurement error}
\label{sec:concrete-laplace-ap}
Throughout this subsection, the measurement error has Laplace density, denoted as 
$k_b$. 
Its characteristic function is \(\varphi_\varepsilon(t)=(1+b^2t^2)^{-1}\), so the error is ordinary smooth of
order \(\beta=2\). For fixed constants
\(0<\tau_-\le\tau_+<\infty\) and \(c_0,C_0>0\), consider the class
\begin{equation}
\mathcal C_{\mathrm{sg}}
=\left\{f_0=\phiN_\tau*Q_0:\
\tau\in[\tau_-,\tau_+],\quad
Q_0([-t,t]^c)\le C_0e^{-c_0t^2}\ \text{for all }t\ge0\right\}.
\label{eq:concrete-class}
\end{equation}
Here \(\phi_\tau=\phi(\,\cdot\,;0,\tau^2)\) denotes the centered Gaussian density with standard 
deviation \(\tau\). The mixing law \(Q_0\) is an arbitrary probability measure and may be 
continuous, discrete, or mixed. Thus \(\mathcal C_{\mathrm{sg}}\) is infinite-dimensional: it 
consists of Gaussian-smoothed probability laws whose smoothing scale is bounded away from zero and 
whose mixing distribution has a uniform subgaussian tail. It is smoother than a generic Sobolev 
class.

The approximation, localized likelihood, and posterior empirical-process
conditions required by Theorem~\ref{thm:local-ordinary-rates} are verified
for Laplace measurement error in
Propositions~\ref{supp-prop:supp-concrete-laplace-ap}--
\ref{supp-prop:supp-concrete-laplace-pe} of the Supplementary Material.
A compatible sieve calibration is used in the corollary below.

\begin{corollary}[Laplace rates for the two estimators]
\label{cor:concrete-laplace-rates}
Fix \(s>0\) and suppose \(f_0\in\mathcal C_{\mathrm{sg}}\). Let the
measurement error be Laplace with scale \(b>0\), and compute the estimators
over \(\mathcal F_h^{\mathrm{reg}}\) with
\(K_h\asymp h^{-1}L_h^{a_K}\), \(M_h=C_ML_h\),
\(\pi_{\min,h}=h^{2s+6}\), and \(\bar\sigma\ge\tau_+\), where
\(a_K\ge0\) is finite. Let \(h_n=n^{-1/(2s+5)}\), up to a fixed
logarithmic calibration. Then, for a finite \(C\),
\[
\begin{aligned}
\|\widehat f_{h_n}-f_0\|_2
&=O_p\!\left\{n^{-s/(2s+5)}(\log n)^C\right\},\\
\|\widehat f_{h_n}-f_0\|_1
&=O_p\!\left\{n^{-s/(2s+5)}(\log n)^C\right\},\\
\|\widehat f_{\post,h_n}-f_0\|_1
&=O_p\!\left\{n^{-s/(2s+5)}(\log n)^C\right\}.
\end{aligned}
\]
The empirical posterior-averaging term is
\(O_p\{n^{-1/2}h_n^{-3/2}(\log n)^C\}
=O_p\{h_n^{s+1}(\log n)^C\}\), and is therefore smaller than the direct
latent rate by the factor \(h_n\), up to logarithmic terms.
\end{corollary}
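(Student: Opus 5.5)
The corollary should follow by applying Theorem~\ref{thm:local-ordinary-rates} with $\beta=2$. The work is to check its hypotheses for Laplace error and the class $\mathcal C_{\mathrm{sg}}$, and then to track the posterior exponent $\gamma$.

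\emph{Assumption~\ref{ass:local-os}.} This is immediate. We have $\varphi_\varepsilon(t)=(1+b^2t^2)^{-1}$, which lies between $c(1+|t|)^{-2}$ and $C(1+|t|)^{-2}$. It never vanishes, and $k_b\le(2b)^{-1}$ is bounded.

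\emph{Smoothness and tails of $f_0$.} Every $f_0=\phi_\tau*Q_0\in\mathcal C_{\mathrm{sg}}$ satisfies $|\varphi_{f_0}(t)|\le e^{-\tau_-^2t^2/2}$. Hence $f_0\in H^s$ for every $s$, with a uniform radius. The subgaussian bound on $Q_0$ and the Gaussian kernel give subgaussian tails for $f_0$. So the additional $L^1$ hypotheses of Theorem~\ref{thm:local-ordinary-rates} hold: $M_h=C_ML_h$, and $\bar\sigma\ge\tau_+$ bounds the component scales.

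\emph{Approximation and likelihood complexity.} For Assumption~\ref{ass:local-ap}, I would discretize $Q_0$ on $[-M_h,M_h]$ by moment matching, as in \citet{ghosalvdv2001}. This gives a mixture with $K\lesssim h^{-1}L_h^{a}$ atoms, common scale $\tau\in[h,\bar\sigma]$, and an arbitrarily high polynomial-in-$h$ error. The weight floor $\pi_{\min,h}=h^{2s+6}$ is handled by mixing with a small uniform-weight correction. Because $K_h\pi_{\min,h}\lesssim h^{2s+5}L_h^{a_K}$, this perturbs KL and $\V$ by $O(h^{2(s+2)}L_h^{C})$, matching $h^{2(s+\beta)}$. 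The ratio $g_0/g_h^\circ$ must be controlled in the tails. Laplace convolution has exponential tails, and these dominate both densities at $|y|\gtrsim M_h$, so the log ratio is polynomial in $|y|$ there. This bounds $\V$.

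For Assumption~\ref{ass:local-ll}, note that parameters range over a box of size polynomial in $h^{-1}$ and $L_h$. The convolved map is Lipschitz in Hellinger distance, since $g_f\ge c\,e^{-|y|/b}$-type lower envelopes hold on the relevant range. This gives $H_{[]}(u,\mathcal G_h,d_{\mathrm H})\lesssim K_h\log(L_h/(hu))$. The entropy integral is then at most $\sqrt n\epsilon^2$ for $\epsilon\ge r_{n,h}$. The iterated-log BIC penalty is $\lesssim K_h L_h^{a_P}$.

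\emph{Posterior empirical complexity (main obstacle).} I expect Assumption~\ref{ass:local-pe} with $\gamma=3/2$ to be the hardest step. The plan is to write $\|(\Pn-\Pzero)q_f\|_1\le\int_{|x|\le A}|\cdot|\,dx+\text{tails}$ with $A\asymp L_h$. The tails are handled by the latent and observed tail bounds. On the central region, I bound $\E\sup_f|(\Pn-\Pzero)q_{f,x}|$ for each $x$ via a bracketing or chaining bound, as follows.
\begin{itemize}
\item The kernel $q_{f,x}(y)=f(x)k_b(y-x)/g_f(y)$ has sup norm $\lesssim f(x)/\inf g_f\lesssim h^{-1}$, because the components have scale $\ge h$ and $g_f$ is bounded below locally.
\item Its variance under $\Pzero$ is $\lesssim h^{-1}$.
\item Lipschitz dependence on the parameters carries the polynomial factors $h^{-2}$ into the logarithm only.
\end{itemize}
This gives pointwise order $n^{-1/2}h^{-1/2}L_h^{C}$, and integrating over $|x|\le A$ costs only $L_h$. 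If that sharper bound fails, the fallback is a cruder envelope $h^{-3/2}$, which is what the statement records. Either way $\gamma\le3/2<\beta+1/2=5/2$.

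\emph{Conclusion.} With $h_n=n^{-1/(2s+5)}$, the posterior term is $n^{-1/2}h_n^{-3/2}=h_n^{s+5/2-3/2}=h_n^{s+1}$, up to logarithms. This is $h_n$ times the direct rate $h_n^{s}=n^{-s/(2s+5)}$. Theorem~\ref{thm:local-ordinary-rates} then delivers all three displayed rates.
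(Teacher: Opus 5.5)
Your architecture is the paper's. You apply the localized ordinary-smooth theorem with $\beta=2$, use $|\varphi_{f_0}(t)|\le e^{-\tau_-^2t^2/2}$ and the subgaussian tail of $f_0$, verify the approximation, localized-likelihood and posterior-complexity conditions for Laplace error, and finish with $n^{-1/2}h_n^{-3/2}=h_n^{s+1}$. Your approximation and entropy sketches use different but workable devices: uniform-weight mixing instead of deleting atoms lighter than $\pi_{\min,h}$, and multiplicative brackets from log-Lipschitz parameter dependence instead of a sup-norm cover turned into Hellinger brackets by the upper envelope $Ce^{-c(|y|-M_h)_+}$. The genuine gap is the step you flag yourself, posterior empirical complexity. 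Your first bullet reaches the right bound for a reason that fails, your second bullet is false, and your pointwise rate $n^{-1/2}h^{-1/2}$ drops a dimension factor. The value $\gamma=3/2$ is then asserted as a fallback rather than derived, so neither the posterior rate nor the final comparison sentence is established.

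\emph{(i) The envelope.} The bound $q_{f,x}\lesssim h^{-1}$ does not follow from $f(x)/\inf g_f$, because $g_f$ has no sieve-uniform floor of constant order. On a window $[-B,B]$ the best sieve-wide floor is of order $e^{-(B+M_h)/b}$. With $M_h=C_ML_h$, and $B$ about $bsL_h$ so that $\Pzero(|Y|>B)\lesssim h^s$, this floor is roughly $h^{s+C_M/b}$. The resulting envelope $h^{-1-s-C_M/b}$ exceeds the single power of $h_n$ of slack as soon as $s+C_M/b>1$. The $h^{-1}$ envelope is true, but for a different reason. The componentwise comparison $k_b(y-x)/b_{\mu,\sigma}(y)\le Ce^{|x-\mu|/b}$ holds uniformly in $y$. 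Its exponential growth is absorbed by the Gaussian factor, since $\sup_u\phiN(u;0,\sigma^2)e^{|u|/b}\le C/\sigma$. This gives $q_f(x\mid y)\le E_h(x)\lesssim h^{-1}e^{-c(|x|-M_h-1)_+^2}$ for every $y$, with no truncation of $Y$.

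\emph{(ii) The variance.} The variance is not $O(h^{-1})$ uniformly over the sieve. Suppose $f$ has a component of scale $h$ at $x$ and a second component far away, each with weight $1/2$. Then $q_{f,x}(Y)\approx(2\pi)^{-1/2}h^{-1}r_1(Y)$ with $r_1$ a nondegenerate responsibility, so the variance is of order $h^{-2}$.

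\emph{(iii) The dimension factor.} Lipschitz constants enter only the logarithm, but the dimension does not. The entropy is $K_h\log(\cdot/\epsilon)$ with $K_h\asymp h^{-1}L_h^{a_K}$. Even with your own inputs, chaining therefore gives $n^{-1/2}h^{-1}$ pointwise, not $n^{-1/2}h^{-1/2}$.

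\emph{The missing argument} is envelope times square-root dimension. Take a sup-norm cover of $\{q_f(x\mid\cdot)\}$ at scale $\epsilon E_h(x)$, with log-cardinality $CK_h\log\{A_h(x)/\epsilon\}$ and $\log A_h(x)\lesssim L_h+\log(e+|x|)$. Symmetrization and Dudley's integral give $\E^*\sup_f|(\Pn-\Pzero)q_f(x\mid\cdot)|\le CE_h(x)\{K_h\log A_h(x)/n\}^{1/2}$. Integrating in $x$ with $\int E_h\lesssim(M_h+1)/h$ yields $(M_h+1)h^{-1}(K_hL_h/n)^{1/2}=n^{-1/2}h^{-3/2}$ up to logarithms. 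This is exactly where $\gamma=3/2$ comes from: $h^{-1}$ from the envelope and $h^{-1/2}$ from $K_h^{1/2}$.
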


\begin{proof}
See Section~\ref{supp-app:main-proofs} of the Supplementary Material. 
\end{proof}
\section{Simulation study}
\label{sec:simulation}
In this simulation study, we assess how accurate the proposed convolution-likelihood density and posterior estimators are, hereafter referred to as MIX and POST, and whether posterior reconstruction improves upon the direct estimator.
The main experiment uses a constrained Gaussian-mixture sieve and compares the proposed
estimators with 
Fourier deconvolution estimator, the quadratic-programming of
\citet{yang2020}, and the penalized-contrast projection estimator of
\citet{comte2006}, denoted by DEC, QP, and PC, respectively. We do not include other recent estimators because they address deconvolution
problems formulated under different structural information. For example,
\citet{liu2009}, \citet{guan2021}, and \citet{cai2025} represent the latent
density on a pre-specified compact interval, so the latent support is assumed
to be known in advance and can play an important role in the performance of the
estimator.
We consider the following thirteen latent distributions, all of which have been used in the deconvolution literature:

{\small
\begin{enumerate}
\item Standard Normal: $X\sim N(0,1)$, always used as baseline.
\item Student-$t_5$: $X\sim \sqrt{3/5}\,T_5$, with $T_5\sim t_5$, used by \cite{yang2020}.
\item Laplace: $X\sim \mathrm{Lap}(1,1/\sqrt{2})$, used by \cite{comte2006} and \cite{cai2025}.
\item Cauchy: $X\sim \mathrm{Cauchy}(0,1)$, used by \cite{comte2006} and \cite{cai2025}.
\item Nearly normal: $X\sim NN(4)$,    used by \cite{guan2021}.
\item Gamma: $X\sim \Gamma(5,1/\sqrt{5})$, proposed in \cite{yang2020}.
\item Comte chi-square: $X\sim \chi^2_3/\sqrt{6}$, used by \cite{comte2006}.
\item Cai chi-square: $X\sim \chi^2_4/\sqrt{8}$, used by \cite{cai2025}.
\item Beta: $X\sim \sqrt{39.2}\,\mathrm{Beta}(2,5)$, used by \cite{cai2025}.
\item Comte mixed gamma: $X\sim 5.48^{-1/2}\{0.4\,\Gamma(5,1)+0.6\,\Gamma(13,1)\}$, by \cite{comte2006}.
\item Cai mixed gamma: $X\sim 25.16^{-1/2}\{0.4\,\Gamma(5,1)+0.6\,\Gamma(13,1)\}$, by \cite{cai2025}.
\item Mixed normal G: $X\sim 0.6\,N(-2,1^2)+0.4\,N(2,0.8^2)$, used by \cite{guan2021}.
\item Mixed normal Y: $X\sim 0.8\,N(s_x^{-1},s_x^{-2})+0.2\,N(5s_x^{-1},s_x^{-2})$, where $s_x=\{0.8\cdot0.2\cdot4^2+1\}^{1/2}$, used by \cite{yang2020}.
\end{enumerate}
}

Most of these latent densities are not themselves finite Gaussian mixtures and therefore do not belong exactly to the fitted finite Gaussian-mixture models at any fixed candidate order. They can nevertheless be approximated as the sieve expands, so the experiment assesses both finite-sample estimation and robustness to sieve misspecification.

As in most deconvolution studies, we consider Gaussian and Laplace measurement
errors as standard examples of supersmooth and ordinary-smooth error densities,
respectively. For the perturbation levels, the error standard deviation is set
to $\sigma_\varepsilon=c\,\sigma_X$, with the noise-to-signal ratio
$c\in\{0.35,0.60\}$. The only exception is the Cauchy design, for which the
variance is not defined; in that case, we use unit scale and set
$\sigma_\varepsilon=c$. The observed-tail regimes for the POST denominators are summarized for the
Monte Carlo designs in Table~\ref{supp-tab:tail-verification} of the Supplementary Material.

For each combination of latent density, error distribution, 
noise level, and sample size $n\in\{200,500,1000,5000\}$, we consider $N=500$ replications. All restrictions defining the theoretical
Gaussian-mixture sieve in~\eqref{eq:regularized-parameter-space}, including the bounds on component weights, locations
and scales, and the maximum admissible number of components, are
explicitly imposed (see Table~\ref{supp-tab:numerical-safeguards} of the Supplementary Material).
For the proposed estimators, the number of
components is selected by criterion~\eqref{eq:bic}, which is numerically identical to the classical BIC because the considered sample sizes are below
\(\exp\{\exp(e)\}\). The likelihood is optimized numerically by EM; the theoretical rate statements concern a global
maximizer unless the optimization-gap condition (see Section~\ref{supp-supp_EMoptim} of the Supplementary Material) is verified. Numerical safeguards are detailed in Table~\ref{supp-tab:numerical-safeguards} of the Supplementary Material.
Performance is measured by the mean integrated squared error (MISE).
Complete scenario-level MISE values, together with their standard errors, are reported in Tables~\ref{supp-tab:symmetric-unimodal-mise-se}--\ref{supp-tab:bimodal-mise-se} of the Supplementary Material. Section \ref{GS} evaluates the proposed estimators based on the regularized Gaussian-mixture sieve and compares them with the benchmark deconvolution methods. Section \ref{SNS} complements this main analysis with a skew-normal sieve diagnostic for the asymmetric positive-support designs.

\subsection{Gaussian-mixture sieve results} \label{GS}
Table~\ref{tab:mise-win-error-noise} reports the average MISE and the percentage of wins for each estimator, aggregated by error distribution and noise level. POST attains the lowest average MISE and the highest win frequency in all four aggregations. Its win share is 36.5\% under Gaussian error and 30.8\% under Laplace error, and it increases from 32.7\% at $c=0.35$ to 34.6\% at $c=0.60$. Thus, POST improves both average risk and the scenario-wise ranking, with the clearest advantage under Gaussian error and stronger contamination.
\begin{table}[!htbp]
\centering
\caption{Average $10^3\times\mathrm{MISE}$ and percentage of wins by error law and noise level. In each row, bold values indicate the best estimator within each panel.}
\label{tab:mise-win-error-noise}
\footnotesize
\setlength{\tabcolsep}{3.2pt}
\renewcommand{\arraystretch}{1.1}
\begin{tabular}{lrrrrrr|rrrrr}
\toprule
& & \multicolumn{5}{c}{Average $10^3\times\mathrm{MISE}$} & \multicolumn{5}{c}{Wins} \\
\cmidrule(lr){3-7}\cmidrule(lr){8-12}
Group & No. & MIX & POST & DEC & PC & QP & MIX & POST & DEC & PC & QP \\
\midrule
Gaussian error & 104 & 13.70 & \textbf{12.25} & 16.78 & 14.79 & 15.25 & 17.3 & \textbf{36.5} & 12.5 & 26.9 & 6.7 \\
Laplace error & 104 & 11.42 & \textbf{10.11} & 12.63 & 10.91 & 12.45 & 22.1 & \textbf{30.8} & 15.4 & 26.9 & 4.8 \\
\midrule
\(c=0.35\) & 104 & 9.28 & \textbf{8.21} & 11.41 & 9.38 & 11.34 & 25.0 & \textbf{32.7} & 10.6 & 27.9 & 3.8 \\
\(c=0.60\) & 104 & 15.84 & \textbf{14.15} & 18.01 & 16.33 & 16.36 & 14.4 & \textbf{34.6} & 17.3 & 26.0 & 7.7 \\
\bottomrule
\end{tabular}
\end{table}

The effect of posterior reconstruction depends strongly on latent-density shape. POST improves upon MIX in all 64 asymmetric-unimodal scenarios, 49 of 64 bimodal scenarios, and 53 of 80 symmetric-unimodal scenarios, for 166 of 208 cases overall. The median gain is 10.1\%, whereas the mean is 3.1\%, because a few large losses depress the average. Gains are strongest for asymmetric-unimodal and mixed-gamma designs, where the Gaussian sieve is least well adapted to the latent shape. The losses in some symmetric-unimodal cases are consistent with added finite-sample variability when the first-stage approximation is already accurate.

Table~\ref{tab:mise-win-by-n} summarizes average performance and scenario-wise wins across sample sizes.
\begin{table}[!htbp]
\centering
\caption{Average $10^3\times\mathrm{MISE}$ and percentage of wins by sample size.}
\label{tab:mise-win-by-n}
\footnotesize
\setlength{\tabcolsep}{3.2pt}
\renewcommand{\arraystretch}{1.1}
\begin{tabular}{lrrrrr|rrrrr}
\toprule
& \multicolumn{5}{c}{Average $10^3\times\mathrm{MISE}$} & \multicolumn{5}{c}{Wins (\%)} \\
\cmidrule(lr){2-6}\cmidrule(lr){7-11}
$n$ & MIX & POST & DEC & PC & QP & MIX & POST & DEC & PC & QP \\
\midrule
200 & 25.00 & \textbf{21.70} & 21.92 & 22.53 & 22.91 & 17.3 & 17.3 & 23.1 & \textbf{34.6} & 7.7 \\
500 & 12.70 & \textbf{11.51} & 16.29 & 12.77 & 15.10 & 21.2 & \textbf{28.8} & 17.3 & \textbf{28.8} & 3.8 \\
1000 & 8.31 & \textbf{7.63} & 12.88 & 9.47 & 11.34 & 26.9 & \textbf{34.6} & 7.7 & 26.9 & 3.8 \\
5000 & 4.22 & \textbf{3.90} & 7.73 & 6.64 & 6.04 & 13.5 & \textbf{53.8} & 7.7 & 17.3 & 7.7 \\
\midrule
Overall & 12.56 & \textbf{11.18} & 14.71 & 12.85 & 13.85 & 19.7 & \textbf{33.7} & 13.9 & 26.9 & 5.8 \\
\bottomrule\end{tabular}\end{table}

Taken together, Tables~\ref{tab:mise-win-error-noise}--\ref{tab:mise-win-by-n} yield a strong empirical conclusion: POST attains the lowest average MISE under both error distributions, at both contamination levels, and at every sample size. It also records the highest overall percentage of wins, 33.7\%. The scenario-wise rankings remain heterogeneous at small and moderate sample sizes: PC leads at $n=200$, POST and PC tie at $n=500$, and POST leads at $n=1000$ and $n=5000$, with a pronounced advantage in the last case.
Paired replication-level comparisons for aggregate MISE summaries, reported in Table~\ref{supp-tab:pairwise-all-expanded} of the Supplementary Material, confirm that in the symmetric-unimodal case, MIX is preferred for the Gaussian baseline, whereas POST performs better for the heavy-tailed designs. In the asymmetric-unimodal setting, POST significantly improves on MIX in all scenarios, although DEC and PC often remain more competitive. In the bimodal case, results are more heterogeneous, whereas the clearest gains occur for the two mixed-gamma designs. 
Finally, Table~\ref{tab:selected-k-by-shape-n} summarizes the selected
number of Gaussian components by latent-density shape and sample size.
The selected order increases with $n$ while remaining moderate overall, with the clearest increases occurring for the heavy-tailed Cauchy and asymmetric positive-support densities. 
\begin{table}[!htbp]
\centering
\caption{Selected number of mixture components by latent-density shape and sample size. Entries are averages over latent densities within each class, error laws, noise levels and Monte Carlo replications.}
\label{tab:selected-k-by-shape-n}
\footnotesize
\renewcommand{\arraystretch}{1.10}
\begin{tabular}{lrrrr}
\toprule
Shape & $n=200$ & $n=500$ & $n=1000$ & $n=5000$ \\
\midrule
Sym. unimodal & 1.53 & 1.87 & 1.98 & 2.35 \\
Asym. unimodal & 1.62 & 1.98 & 2.24 & 2.87 \\
Bimodal & 1.54 & 1.79 & 1.91 & 2.01 \\
\midrule
Overall & 1.56 & 1.88 & 2.04 & 2.41 \\
\bottomrule
\end{tabular}
\end{table}
\subsection{Skew-normal sieve diagnostic for asymmetric designs}\label{SNS}
For asymmetric densities, we report a diagnostic experiment in which Gaussian components are replaced by
skew-normal components.  We write
\(\mathrm{MIX}^{G}\) and \(\mathrm{POST}^{G}\) for the Gaussian-sieve estimators and
\(\mathrm{MIX}^{SN}\) and \(\mathrm{POST}^{SN}\) for the skew-normal-sieve estimators. The
component-specific shape parameters are regularized through the Azzalini--Arellano-Valle penalized likelihood;
see \citet{azzalini} for more details. 

\begin{table}[!htbp]
\centering
\caption{Average $10^3\times\mathrm{MISE}$ and scenario-wise wins on the asymmetric scenarios common to the Gaussian-sieve and skew-normal-sieve experiments. Row minima/maxima are shown in bold.}
\label{tab:flex-asym-overall}
\footnotesize
\setlength{\tabcolsep}{5pt}
\renewcommand{\arraystretch}{1.10}
\begin{tabular}{lrrrrrrr}
\toprule
& $\mathrm{MIX}^{G}$ & $\mathrm{POST}^{G}$ & DEC & PC & QP & $\mathrm{MIX}^{SN}$ & $\mathrm{POST}^{SN}$ \\
\midrule
MISE & 19.53 & 15.23 & 14.03 & 13.52 & 15.99 & 8.14 & \textbf{6.53} \\
Wins & 0 & 0 & 1 & 11 & 0 & 6 & \textbf{46} \\
\bottomrule
\end{tabular}
\end{table}
We see from Table~\ref{tab:flex-asym-overall} that  replacing the
Gaussian components with skew-normal components substantially reduces
the average MISE, with \({\mathrm{POST}}^{SN}\) attaining the lowest
average MISE and winning in 46 of the 64 scenarios.
Table~\ref{tab:flex-asym-by-latent} reports the results separately for
each asymmetric latent density and shows where the gain from the
skew-normal sieve comes from. For Cai chi-square and Comte chi-square,
the flexible posterior estimator dominates almost uniformly and
substantially reduces the average MISE relative both to
\({\mathrm{POST}}^{G}\) and to the best classical benchmark. The Beta case is less clear-cut:
\({\mathrm{POST}}^{SN}\) has the lowest average MISE, but the classical
benchmarks remain competitive and obtain several scenario-wise wins,
as expected for a bounded asymmetric density. Gamma follows the same
qualitative pattern, with smaller gains and stronger competition from
\({\mathrm{MIX}}^{SN}\) in some configurations, consistent with its
more moderate skewness.
\begin{table}[!htbp]
\centering
\caption{Average $10^3\times\mathrm{MISE}$ by asymmetric latent density. ``Best benchmark'' is the scenario-wise minimum among DEC, PC, and QP, averaged within each latent design. ``$\mathrm{POST}^{SN}$ wins'' counts the cases in which $\mathrm{POST}^{SN}$ is strictly the smallest-MISE method.}
\label{tab:flex-asym-by-latent}
\footnotesize
\setlength{\tabcolsep}{4pt}
\renewcommand{\arraystretch}{1.10}
\begin{tabular}{lrrrrrr}
\toprule
Latent density & $\mathrm{POST}^{G}$ & Best benchmark & $\mathrm{MIX}^{SN}$ & $\mathrm{POST}^{SN}$ & $\mathrm{POST}^{SN}$ wins & Mean $\widehat K$ \\
\midrule
Gamma & 5.01 & 4.48 & 3.10 & \textbf{2.68} & 6/16 & 1.25 \\
Comte chi-square & 31.59 & 24.79 & 16.02 & \textbf{13.03} & 16/16 & 1.54 \\
Cai chi-square & 17.46 & 12.63 & 7.49 & \textbf{5.89} & 16/16 & 1.45 \\
Beta & 6.85 & 5.17 & 5.96 & \textbf{4.54} & 8/16 & 1.28 \\
\midrule
Overall & 15.23 & 11.77 & 8.14 & \textbf{6.53} & 46/64 & 1.38 \\
\bottomrule
\end{tabular}
\end{table}
Results in Table~\ref{tab:flex-asym-by-n} confirm that the improvement persists
as \(n\) increases. At every sample size, \({\mathrm{POST}}^{SN}\)
has the lowest average MISE and improves upon \(\mathrm{MIX}^{SN}\)
in at least 14 of the 16 scenarios. 
The mean number of selected skew-normal mixture
components increases
with \(n\), indicating that the skew-normal sieve becomes gradually
more flexible while remaining moderate in size.
\begin{table}[!htbp]
\centering
\caption{Average $10^3\times\mathrm{MISE}$ by sample size. ``Best benchmark'' is the scenario-wise minimum among DEC, PC, and QP, averaged within each sample size. ``POST/MIX wins'' counts the cases in which $\mathrm{POST}^{SN}$ has smaller MISE than $\mathrm{MIX}^{SN}$. ``Gain vs MIX'' is the average scenario-wise percentage reduction in MISE.}
\label{tab:flex-asym-by-n}
\footnotesize
\setlength{\tabcolsep}{3.5pt}
\renewcommand{\arraystretch}{1.10}
\begin{tabular}{rrrrrrrr}
\toprule
$n$ & $\mathrm{POST}^{G}$ & Best benchmark & $\mathrm{MIX}^{SN}$ & $\mathrm{POST}^{SN}$ & POST/MIX wins & Gain vs MIX & Mean $\widehat K$ \\
\midrule
200 & 30.18 & 18.49 & 13.86 & \textbf{11.05} & 14/16 & 16.2\% & 1.04 \\
500 & 19.33 & 12.93 & 8.88 & \textbf{6.95} & 14/16 & 18.8\% & 1.19 \\
1000 & 14.78 & 10.26 & 6.71 & \textbf{5.36} & 14/16 & 19.9\% & 1.34 \\
5000 & 8.81 & 6.35 & 3.11 & \textbf{2.77} & 16/16 & 13.6\% & 1.95 \\
\bottomrule
\end{tabular}
\end{table}

\section{Framingham systolic blood pressure}
\label{sec:realdata}
We analyze the Framingham systolic blood pressure data used by \citet{cai2025}. The sample contains two second-examination readings, $B_{i1}$ and $B_{i2}$, for $n=1615$ men. Following their construction, we use the average
\(
Y_i=(B_{i1}+B_{i2})/2
\)
as the contaminated observation and the half-difference
\(
E_i=(B_{i2}-B_{i1})/2
\)
as the error sample. Under the usual symmetric replicate-error model, $E_i$ has the same distribution as the measurement error in the average of the replicate measurements. All estimates therefore use the same preprocessing and are reported on the original SBP scale.

We adopt the working specification $\varepsilon_i\sim N(0,\widehat\sigma_\varepsilon^2)$, where $\widehat\sigma_\varepsilon^2$ is the sample variance of the half-differences and is subsequently treated as fixed. The sample variances of $E_i$ and $Y_i$ are approximately $29.27$ and $395.65$, respectively. Thus, the estimated measurement-error variance accounts for about $7.4\%$ of the observed variance, and $\widehat\sigma_\varepsilon/\widehat{\mathrm{sd}}(Y)\simeq0.272$. The application represents a moderate contamination regime. Criterion~\eqref{eq:bic} selects two Gaussian components.

We compare the direct mixture estimate $\widehat f$, the posterior reconstruction $\widehat f_{\post}$, and the penalized maximum-likelihood estimate of \citet{cai2025}. Figure~\ref{fig:framingham-case} reports both the latent-density comparison and an observed-domain calibration.

\begin{figure}[!t]
\centering
\subfloat[Latent-density estimates.\label{fig:framingham-latent}]{%
    \paperfigure{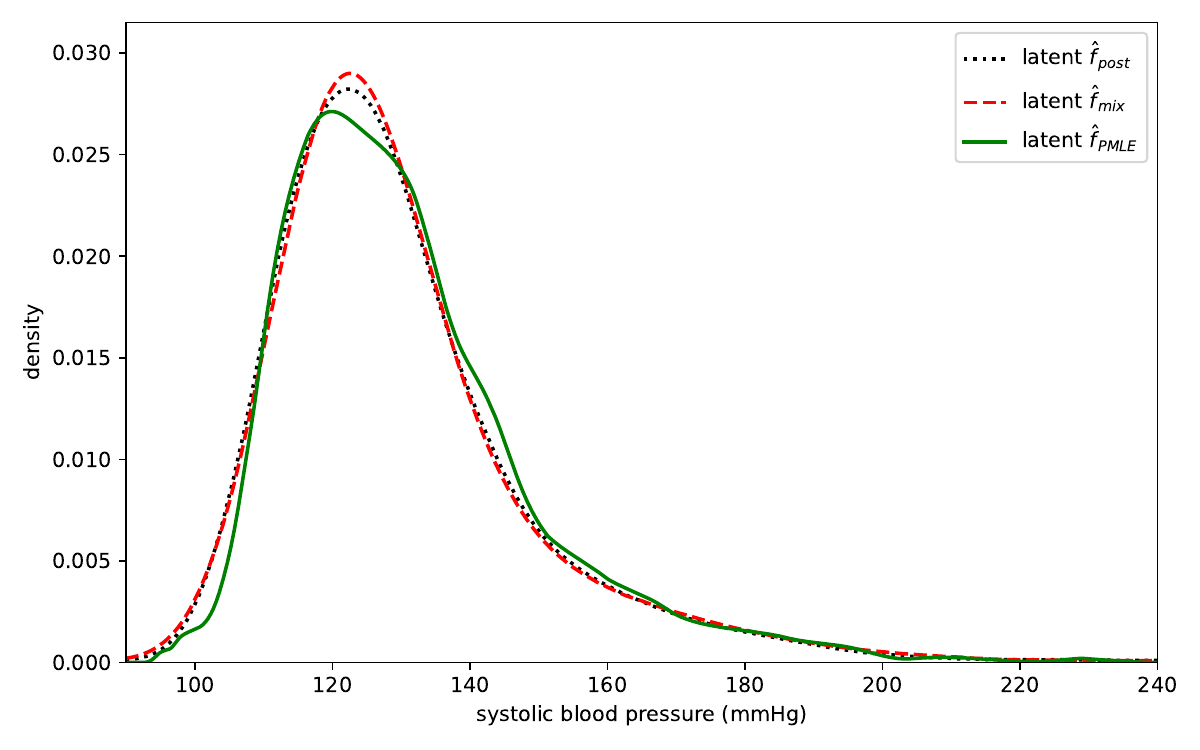}{0.45\linewidth}}
\hfill
\subfloat[Observed data and reconvolved estimates.\label{fig:framingham-reconvolved}]{%
    \paperfigure{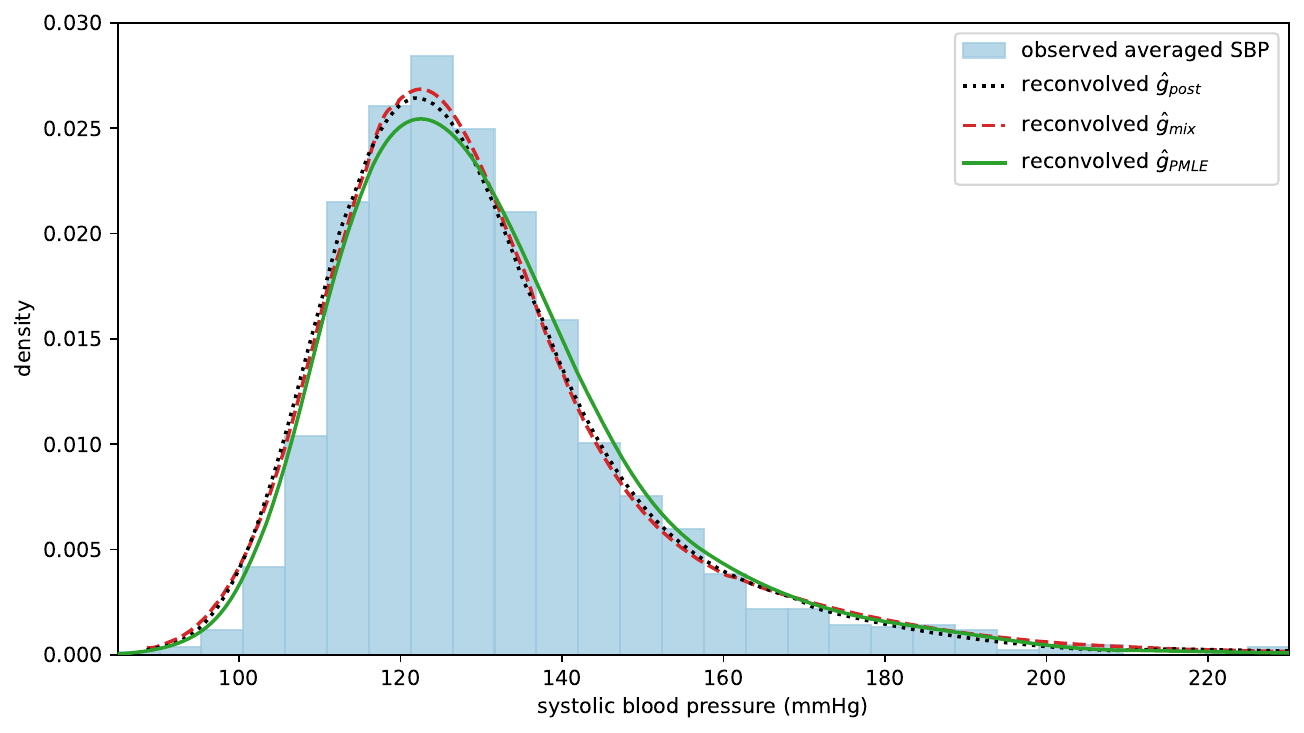}{0.50\linewidth}}
\caption{Framingham data analysis. Panel~\textup{(a)} compares the direct mixture estimate, posterior-density reconstruction, and PMLE estimate on the latent SBP scale. Panel~\textup{(b)} compares the histogram of the averaged readings $Y_i$ with the corresponding reconvolved densities.}
\label{fig:framingham-case}
\end{figure}

In panel~\textup{(a)}, the three estimates agree on the main location and the right-skewed shape of the latent distribution. The direct and posterior estimates are smoother than the PMLE curve, while the posterior update changes the direct mixture fit only slightly. This agreement is consistent with the two-component model selected from the data and indicates that the first-stage representation is already stable in this application.

Panel~\textup{(b)} checks the fitted latent densities on the observed scale by reconvolving each estimate with the working error distribution. The reconvolved direct and posterior curves track the observed histogram closely, particularly around the modal region and the descending right shoulder, while the PMLE reconvolution is somewhat flatter near the mode.

\section{Extensions and open problems}
\label{sec:future-work}
We have introduced a two-stage likelihood framework for density deconvolution that separates regularized estimation from posterior-density reconstruction. The direct estimator provides a stable likelihood fit, whereas the posterior step reuses the fitted conditional latent distributions and may reduce approximation bias when the first-stage sieve is misspecified, at the cost of additional sampling variability under correct finite-dimensional specification. The growing-sieve analysis shows that this reconstruction can preserve the convergence order of the direct estimator under Gaussian and ordinary-smooth measurement error. The simulations and the Framingham application support the same complementary interpretation: posterior reconstruction is most useful when the first-stage fit remains an imperfect representation of the latent shape, while the direct estimator is competitive when a parsimonious Gaussian mixture is already adequate.

The Gaussian analysis suggests several extensions. One is to weaken the fixed positive latent smoothing floor while preserving a conditional-stability bound that still yields vanishing latent error, possibly through scale regularization, roughness penalties, smoothest-near-maximizer selection, or a regularized latent readout. Refining the discretization of Gaussian-smoothed mixing laws may also reduce the required number of components and improve the observed-domain rate.
Other open problems include adaptation to unknown latent smoothness and scale bounds, estimation of the measurement-error distribution from auxiliary or replicated data, and optimization methods with verifiable gaps from the global penalized maximum. Further work could study repeated posterior updates with early stopping, asymmetric mixture sieves, broader ordinary-smooth error families, and uncertainty quantification.

\section*{Supporting Information}
Supporting information. Additional information for this article is available online. The Supporting Information contains deferred proofs, technical refinements, implementation details, and complete simulation results.

\section*{Author address}
Marco Di Marzio, Department of Socio-Economic, Management and Statistical Studies, University of Chieti-Pescara. Email: \email{marco.dimarzio@unich.it}.

\end{document}


\begin{frontmatter}
\title{Supporting Information for ``Regularized-Likelihood Deconvolution with Posterior-Density Reconstruction''}
\runtitle{Supporting Information}
\begin{aug}
\author[A]{\fnms{Marco}~\snm{Di Marzio}\ead[label=e1]{marco.dimarzio@unich.it}\orcid{0000-0003-3500-1890}}
\author[A]{\fnms{Stefania}~\snm{Fensore}\ead[label=e2]{stefania.fensore@unich.it}}
\author[B]{\fnms{Chiara}~\snm{Passamonti}\ead[label=e3]{chiara.passamonti@uni.lu}}
\author[A]{\fnms{Serena}~\snm{Pulcini}\ead[label=e4]{serena.pulcini@phd.unich.it}}

\address[A]{Department of Socio-Economic, Management and Statistical Studies,
University of Chieti-Pescara
\printead[presep={ ,\ }]{e1,e2,e4}}
\address[B]{Department of Mathematics,
University of Luxembourg
\printead[presep={,\ }]{e3}}
\end{aug}
\end{frontmatter}

\noindent
This supporting information contains the proofs of the main theoretical results,
technical arguments supporting the Gaussian and ordinary-smooth analyses,
implementation details, and additional computational and simulation results.
\section{Proofs of the theoretical results of the main paper}
\label{app:main-proofs}
This section contains the proofs of the propositions, lemmas, theorems, and
corollaries stated in the main paper.

\begin{proof}[Proof of Proposition~\ref{main-prop:misspecification}]
For fixed \(x\), write \(\widehat f_{\post}(x)=\Pn q_{\widehat f,x}\). Then
\[
\begin{aligned}
    \widehat f_{\post}(x)-\mathcal R_0(f^\star)(x)
    &=
    (\Pn-\Pzero)q_{\widehat f,x}
    +
    \Pzero\{q_{\widehat f,x}-q_{f^\star,x}\}.
\end{aligned}
\]
The first term is \(o_p(1)\) by posterior empirical stability at \(f^\star\). The second term is \(o_p(1)\) by local continuity of the posterior kernel and \(\widehat f\to f^\star\). Therefore \(\widehat f_{\post}(x)\xrightarrow{p}\mathcal R_0(f^\star)(x)\).
\end{proof}

\begin{proof}[Proof of Theorem~\ref{main-thm:local-posterior-contraction}]
Put \(a=Ks\). Since \(g_t=g_0(1+ta)\), we have
\(
\mathcal R_0(f_t)/{f_0}
=
(1+ts)K^\ast\left[1/(1+ta)\right].
\)
Using the identity
\(
(1+ta)^{-1}
=
1-ta+(t^2a^2)/(1+ta),
\)
set
\(
H_t
=
K^\ast\left[a^2/(1+ta)\right].
\)
Since \(K^\ast[1]=1\) and \(K^\ast a=K^\ast Ks=Ts\), it follows that
\(
K^\ast\left[1/(1+ta)\right]
=
1-tTs+t^2H_t.
\)
Therefore,
\[
\frac{\mathcal R_0(f_t)-f_0}{f_0}
=
t(I-T)s
+
t^2\{H_t-sTs+t\,sH_t\}.
\]
For \(|t|\|s\|_\infty\leq1/2\), the \(L^\infty\)-contraction property of
conditional expectation gives
\(
\|H_t\|_\infty
\leq
2\|s\|_\infty^2.
\)
Hence \(H_t-sTs+t\,sH_t\) remains bounded in \(L^2(f_0)\) as
\(t\to0\), which proves the stated linearization.

Taking the squared \(L^2(f_0)\)-norm gives
\[
\chi^2\{\mathcal R_0(f_t),f_0\}
=
t^2\|(I-T)s\|_{L^2(f_0)}^2
+
O(|t|^3).
\]
Since
\(
\chi^2(f_t,f_0)
=
t^2\|s\|_{L^2(f_0)}^2,
\)
we obtain
\[
\begin{aligned}
\|(I-T)s\|_{L^2(f_0)}^2
-
\|s\|_{L^2(f_0)}^2
&=
-\langle s,(2T-T^2)s\rangle_{L^2(f_0)},\\
\langle s,(2T-T^2)s\rangle_{L^2(f_0)}
&=
2\|Ks\|_{L^2(g_0)}^2
-
\|Ts\|_{L^2(f_0)}^2\\
&\geq
\|Ks\|_{L^2(g_0)}^2.
\end{aligned}
\]
This proves the first part of the theorem and shows that the contraction is
strict whenever \(Ks\neq0\).

Now assume additionally that \(g_0>0\) almost everywhere and that
\(\varphi_\varepsilon(u)\neq0\) for every \(u\in\mathbb R\).
If \(Ks=0\), then
\(
\mathcal A[f_0s]
=
g_0Ks
=
0
\)
almost everywhere. Since \(f_0s\in L^1(\mathbb R)\) by
Cauchy--Schwarz, taking Fourier transforms gives
\(
\widehat{f_0s}(u)\varphi_\varepsilon(u)
=
0.
\)
Because \(\varphi_\varepsilon\) does not vanish,
\(\widehat{f_0s}=0\). By uniqueness of the Fourier transform on
\(L^1(\mathbb R)\), \(f_0s=0\) almost everywhere, and hence
\(s=0\) in \(L^2(f_0)\).
Thus every nonzero \(s\) satisfies
\(
\|Ks\|_{L^2(g_0)}^2>0,
\)
and therefore
\[
\langle s,(2T-T^2)s\rangle_{L^2(f_0)}>0.
\]
The expansion above then implies
\[
\chi^2\{\mathcal R_0(f_t),f_0\}
-
\chi^2(f_t,f_0)
<
0
\]
for all sufficiently small \(t\neq0\), proving the strict-contraction
claim.
\end{proof}

\begin{proof}[Proof of Theorem~\ref{main-thm:mestimator-expansion}]
For the direct estimator, differentiability of
\(\theta\mapsto f_\theta(x)\) gives
\(\sqrt n\{\widehat f(x)-f_0(x)\}
=\dot f_{\theta_0}(x)^\top\sqrt n(\widehat\theta-\theta_0)+o_p(1)\).
The standard asymptotic linear expansion of the M-estimator is
\[
\sqrt n(\widehat\theta-\theta_0)
=
I_0^{-1}n^{-1/2}\sum_{i=1}^nS_{\theta_0}(Y_i)+o_p(1).
\]
Substitution gives the stated expansion with
\(\psi_{\mathrm{dir},x}\).

For the posterior estimator, let
\(\mathbb G_n=\sqrt n(\Pn-\Pzero)\). Since
\(\Pzero q_{\theta_0,x}=f_0(x)\),
\[
\sqrt n\{\widehat f_{\post}(x)-f_0(x)\}
=
\mathbb G_n q_{\theta_0,x}
+
\sqrt n\,\Pzero\{q_{\widehat\theta,x}-q_{\theta_0,x}\}
+
\mathbb G_n\{q_{\widehat\theta,x}-q_{\theta_0,x}\}.
\]
The differentiability and stochastic-equicontinuity conditions give
\(\sqrt n\,\Pzero\{q_{\widehat\theta,x}-q_{\theta_0,x}\}
=
\dot{\mathcal R}_{0,\theta_0}(x)^\top
\sqrt n(\widehat\theta-\theta_0)+o_p(1)\)
and
\(\mathbb G_n\{q_{\widehat\theta,x}-q_{\theta_0,x}\}=o_p(1)\).
Substituting the M-estimator expansion yields
\[
\sqrt n\{\widehat f_{\post}(x)-f_0(x)\}
=
n^{-1/2}\sum_{i=1}^n
\left\{
q_{\theta_0}(x\mid Y_i)-f_0(x)
+
\dot{\mathcal R}_{0,\theta_0}(x)^\top
I_0^{-1}S_{\theta_0}(Y_i)
\right\}
+
o_p(1).
\]
The asserted normal limits follow from the ordinary central limit theorem
for the corresponding scalar influence functions.

It remains to establish the variance decomposition. Differentiating
\(\mathcal R_0(f_\theta)(x)\) at \(\theta_0\) gives
\(\dot{\mathcal R}_{0,\theta_0}(x)
=\Pzero\dot q_{\theta_0}(x\mid Y)
=\dot f_{\theta_0}(x)-A_x\).
Substitution into the posterior influence function gives
\(\psi_{\post,x}(Y)=\psi_{\mathrm{dir},x}(Y)+\xi_x(Y)\).

Finally, since
\(\E_0S_{\theta_0}(Y)=0\) and
\(\E_0\{S_{\theta_0}(Y)S_{\theta_0}(Y)^\top\}=I_0\),
we have
\(\E_0\{\xi_x(Y)S_{\theta_0}(Y)\}
=A_x-I_0I_0^{-1}A_x=0\).
Thus \(\xi_x\) is orthogonal to the score and hence to
\(\psi_{\mathrm{dir},x}\), which gives the stated variance decomposition.
\end{proof}

\begin{proof}[Proof of Proposition~\ref{main-prop:observed-kl-rate}]
Since \(\widehat f\) maximizes the penalized criterion, it holds that
\[
\Pn\ell_{\widehat f}
-\frac{\operatorname{pen}_n(\widehat f)}{n}
\ge
\Pn\ell_{f_n^\circ}
-\frac{\operatorname{pen}_n(f_n^\circ)}{n}.
\]
Because the penalty is nonnegative,
\[
\Pzero(\ell_{f_n^\circ}-\ell_{\widehat f})
\le
(\Pn-\Pzero)(\ell_{\widehat f}-\ell_{f_n^\circ})
+\frac{\operatorname{pen}_n(f_n^\circ)}{n}.
\]
The left-hand side equals
$\KL(g_{f_0},g_{\widehat f})
-\KL(g_{f_0},g_{f_n^\circ}).$
Assumption~\ref{main-ass:primitive-approximation} bounds the second term by \(b_n^2\), while Assumption~\ref{main-ass:primitive-ulln} gives
\[
\sup_{f\in\mathcal F_n}
|(\Pn-\Pzero)(\ell_f-\ell_{f_n^\circ})|
=O_p(c_n).
\]
Therefore,
\[
\KL(g_{f_0},g_{\widehat f})
\le
b_n^2+O_p(c_n)
+\frac{\operatorname{pen}_n(f_n^\circ)}{n},
\]
and hence
\[
\KL(g_{f_0},g_{\widehat f})^{1/2}
=O_p(e_n).
\]
Under Gaussian measurement error, Proposition~\ref{prop:gaussian-global-likelihood}
verifies Assumption~\ref{main-ass:primitive-ulln} without a separate likelihood-tail
condition.
\end{proof}

\begin{proof}[Proof of Proposition~\ref{main-prop:latent-rate-transfer}]
Let \(h=f-f_0\) and let \(\mathcal A f=f*f_\varepsilon\) denote the convolution operator with $f_\varepsilon$, such that \(g_f=\mathcal A f\). By Plancherel's identity and the convolution theorem,
\[
    \int_{|t|\le T}|\varphi_h(t)|^2\,dt
    \le
    \kappa_\varepsilon(T)^2
    \int_{|t|\le T}|\varphi_{\mathcal A h}(t)|^2\,dt
    \le
    C\kappa_\varepsilon(T)^2\|\mathcal A h\|_2^2,
\]
where \(C\) depends only on the Fourier convention. On the complement \(|t|>T\), the Sobolev bound gives
\[
    \int_{|t|>T}|\varphi_h(t)|^2\,dt
    \le
    T^{-2s}\{J_s(f)+J_s(f_0)\}^2
    \le
    T^{-2s}(R_n+R_0)^2.
\]
Combining the low- and high-frequency bounds yields
\[
    \|f-f_0\|_2
    \le
    C\kappa_\varepsilon(T)\|\mathcal A f-\mathcal A f_0\|_2
    +
    C(R_n+R_0)T^{-s}.
\]
Applying this bound to \(f=\widehat f\), with \(T=T_n\), and using the observed \(L^2\) rate assumption \(\|g_{\widehat f}-g_{f_0}\|_2=O_p(d_n)\) gives
\[
    \|\widehat f-f_0\|_2
    =
    O_p
    \left\{
        \kappa_\varepsilon(T_n)d_n
        +
        (R_n+R_0)T_n^{-s}
    \right\}.
\]
On \([-A_n,A_n]\), Cauchy's inequality gives
\[
    \int_{-A_n}^{A_n}|\widehat f(x)-f_0(x)|\,dx
    \le
    (2A_n)^{1/2}\|\widehat f-f_0\|_2.
\]
The remaining tail contribution is bounded by \(2\eta_n\). This proves the \(L^1\) rate.
\end{proof}

\begin{proof}[Proof of Lemma~\ref{main-lem:R-stability}]
For any candidate density \(f\), we obtain, by the triangle inequality and Fubini's theorem
\[
\begin{aligned}
    \|\mathcal R_0(f)-f\|_1
    &\le
    \int f(x)
    \int f_\varepsilon(y-x)
    \left|
    \frac{g_{f_0}(y)}{g_f(y)}-1
    \right|
    dy\,dx                                                       \\
    &=
    \int g_f(y)
    \left|
    \frac{g_{f_0}(y)}{g_f(y)}-1
    \right|
    dy                                                           \\
    &=
    \|g_{f_0}-g_f\|_1.
\end{aligned}
\]
Since a convolution with a density is an \(L^1\)-contraction, Young's convolution inequality gives
\(
    \|g_{f_0}-g_f\|_1
    \le
    \|f_0-f\|_1.
\)
Therefore,
\[
    \|\mathcal R_0(f)-f_0\|_1
    \le
    \|\mathcal R_0(f)-f\|_1+
    \|f-f_0\|_1
    \le
    2\|f-f_0\|_1.
\]
\end{proof}

\begin{proof}[Proof of Theorem~\ref{main-thm:posterior-rate}]
Write \(\widehat f_{\post}=\Pn q_{\widehat f}\). Adding and subtracting \(\mathcal R_0(\widehat f)=\Pzero q_{\widehat f}\),
\[
    \|\widehat f_{\post}-f_0\|_1
    \le
    \|\Pn q_{\widehat f}-\Pzero q_{\widehat f}\|_1
    +
    \|\mathcal R_0(\widehat f)-f_0\|_1.
\]
The first term is \(O_p(\tau_n)\) by Assumption~\ref{main-ass:primitive-posterior-stability}, while the second term is \(O_p(r_n)\) by Lemma~\ref{main-lem:R-stability}. 
\end{proof}

\begin{proof}[Proof of Theorem~\ref{main-thm:explicit-sieve-rates}]
Under the theorem calibration,
Proposition~\ref{prop:supp-gaussian-smoothed-observed-rate} gives
\(\KL(g_0,g_{\widehat f})^{1/2}
=O_p\{n^{-1/5}(\log n)^{7/10}\}\) and
\(d_n:=\|g_{\widehat f}-g_0\|_2
=O_p\{n^{-1/5}(\log n)^{7/10}\}\).
Every \(f\in\mathcal F_n^G\), as well as \(f_0\), satisfies the analytic
Fourier envelope required by
Lemma~\ref{lem:supp-gaussian-analytic-stability}. Hence
\[
\|\widehat f-f_0\|_2
=
O_p\!
\left[d_n^\vartheta
\{\log(d_n^{-1})\}^{-(1-\vartheta)/4}\right]
=
O_p\!\left\{
n^{-\vartheta/5}(\log n)^{19\vartheta/20-1/4}
\right\}.
\]
Lemma~\ref{lem:supp-gaussian-smoothed-l1-transfer}, with any fixed
\(D>\vartheta/5\), then gives
\(\|\widehat f-f_0\|_1
=O_p\{n^{-\vartheta/5}(\log n)^{19\vartheta/20}\}\).

For posterior reconstruction,
Proposition~\ref{prop:supp-gaussian-smoothed-posterior-complexity} gives
\(\|(\Pn-\Pzero)q_{\widehat f}\|_1
=O_p\{n^{-2/5}(\log n)^{2/5}\}\), while
Lemma~\ref{main-lem:R-stability} gives
\(\|\mathcal R_0(\widehat f)-f_0\|_1
\leq2\|\widehat f-f_0\|_1\). Since \(0<\vartheta<1\), the empirical
posterior term has smaller polynomial order than the direct latent
\(L^1\) error, which proves the posterior rate.
\end{proof}

\begin{proof}[Proof of Theorem~\ref{main-thm:gaussian-sharp-end-to-end}]
Take \(M=M_n\), \(K=K_n\), and \(\pi_{\min}=\pi_{\min,n}\) in
Proposition~\ref{prop:supp-gaussian-smoothed-klv-approximation}, and denote the resulting approximating
density by \(f_n^\diamond\), with \(g_n^\diamond=g_{f_n^\diamond}\). By
\eqref{eq:gaussian-klv-approximation},
\[
\KL(g_0,g_n^\diamond)+\V(g_0,g_n^\diamond)
\le C\left\{
\frac{M_n^2}{K_n^2}
+\frac{M_n^4}{K_n^4}
+e^{-cM_n^2}
+K_n\pi_{\min,n}(1+M_n^4)
\right\}.
\]
Under~\eqref{main-eq:gaussian-sharp-calibration}, the first term is of order \((r_n^\sharp)^2\), the
second is \(o\{(r_n^\sharp)^2\}\), and \(C_M\) can be chosen so that
\(e^{-cM_n^2}=o\{(r_n^\sharp)^2\}\). Moreover,
\[
\frac{K_n\pi_{\min,n}(1+M_n^4)}{(r_n^\sharp)^2}
=O\left\{n^{1-u}\frac{\log n}{L_3(n)}\right\}=o(1)
\]
because \(u>1\). Thus
\(\KL(g_0,g_n^\diamond)+\V(g_0,g_n^\diamond)\le C_a(r_n^\sharp)^2\) for a fixed \(C_a\).

By Lemma~\ref{lem:gaussian-fixed-scale-hellinger}, for every fixed \(A\ge1\),
\[
\int_{c_0(Ar_n^\sharp)^2}^{c_1Ar_n^\sharp}
\{1+H_{[]}(v,\mathcal G_n^G,\dH)\}^{1/2}dv
\le
CAr_n^\sharp\{K_n\log n\}^{1/2}.
\]
The ratio of the right-hand side to \(\sqrt n\,(Ar_n^\sharp)^2\) is
\(O[\{A\sqrt{L_3(n)}\}^{-1}]\), and hence the entropy condition in Lemma~\ref{lem:penalized-likelihood-localization}
holds for all sufficiently large \(n\).

Because \(\widehat f\) maximizes the penalized criterion and the penalty is nonnegative,
\(\Pn\log g_{\widehat f}\ge\Pn\log g_n^\diamond-
\operatorname{pen}_n(f_n^\diamond)/n\). Along the approximating sequence,
\[
\frac{\operatorname{pen}_n(f_n^\diamond)}{n}
\le C\frac{K_nL_3(n)\log n}{n}
=O\{(r_n^\sharp)^2\}.
\]
Lemma~\ref{lem:penalized-likelihood-localization}, with \(\epsilon_n=r_n^\sharp\), now gives
\(\dH(g_{\widehat f},g_0)=O_p(r_n^\sharp)\).

Finally, all densities in \(\mathcal G_n^G\), as well as \(g_0\), are uniformly bounded above because
the observed Gaussian variances have a fixed positive lower bound. Therefore
\(\|p-q\|_2^2\le C\dH^2(p,q)\) on the relevant class, which proves the \(L^2\) assertion in~\eqref{main-eq:gaussian-sharp-observed-conclusion}.

Set
\[
d_n:=\|g_{\widehat f}-g_0\|_2=O_p(r_n^\sharp).
\]
Applying Lemma~\ref{lem:supp-gaussian-analytic-stability} and using
\(\log(1/r_n^\sharp)\asymp\log n\) yields
\[
\|\widehat f-f_0\|_2
=
O_p\left[
(r_n^\sharp)^\vartheta
\{\log(1/r_n^\sharp)\}^{-(1-\vartheta)/4}
\right]
=
O_p\left[
n^{-\vartheta/3}
(\log n)^{3\vartheta/4-1/4}
\{L_3(n)\}^{\vartheta/3}
\right].
\]
Lemma~\ref{lem:supp-gaussian-smoothed-l1-transfer}, with any fixed \(D>\vartheta/3\), adds the factor
\((\log n)^{1/4}\) and proves the direct \(L^1\) rate.

For the posterior estimator,
\[
\|\widehat f_{\post}-f_0\|_1
\le
\|(\Pn-\Pzero)q_{\widehat f}\|_1
+
\|\mathcal R_0(\widehat f)-f_0\|_1.
\]
Proposition~\ref{prop:supp-gaussian-smoothed-posterior-complexity} gives
\[
\sup_{f\in\mathcal F_n^G}
\|(\Pn-\Pzero)q_f\|_1
=
O_p\left[
n^{-1/3}(\log n)^{1/2}\{L_3(n)\}^{-1/6}
\right],
\]
while Lemma~\ref{main-lem:R-stability} bounds the second term by
\(2\|\widehat f-f_0\|_1\). Since \(0<\vartheta<1\), the posterior empirical term is of smaller
polynomial order than the direct latent \(L^1\) rate. This proves the result.
\end{proof}

\begin{proof}[Proof of Theorem~\ref{main-thm:local-ordinary-rates}]
Under \(K_h\asymp h^{-1}L_h^{a_K}\),
Proposition~\ref{prop:supp-local-observed-rate} gives, up to logarithmic
factors,
\[
\|g_{\widehat f_h}-g_0\|_2
=O_p\{h^{s+\beta}+n^{-1/2}h^{-1/2}\}.
\]
Apply Lemma~\ref{lem:supp-local-direct-inverse} with
\(T_h=h^{-1}\{A L_h\}^{1/2}\) and \(A>2s+1\). The approximation part of
the observed error becomes \(h^s\) up to logarithms, the stochastic part
becomes \(n^{-1/2}h^{-(\beta+1/2)}\) up to logarithms, the true-density
high-frequency term is \(O(h^sL_h^{-s/2})\), and the Gaussian-sieve tail is
\(o(h^s)\). Hence
\[
\|\widehat f_h-f_0\|_2
=O_p\{h^sL_h^C+n^{-1/2}h^{-(\beta+1/2)}L_h^C\}.
\]
Balancing the two polynomial terms gives
\(h_n=n^{-1/(2s+2\beta+1)}\) and proves the stated \(L^2\) rate.

Under the additional tail conditions,
Lemma~\ref{lem:supp-local-l2-l1} converts this bound into
\(\|\widehat f_{h_n}-f_0\|_1
=O_p\{n^{-s/(2s+2\beta+1)}(\log n)^C\}\).
For posterior reconstruction, Lemma~\ref{main-lem:R-stability} gives
\[
\|\widehat f_{\post,h_n}-f_0\|_1
\le
\|(\Pn-\Pzero)q_{\widehat f_{h_n}}\|_1
+2\|\widehat f_{h_n}-f_0\|_1.
\]
By Assumption~\ref{main-ass:local-pe}, the empirical term is
\(O_p\{n^{-1/2}h_n^{-\gamma}L_{h_n}^{a_Q}\}\). Since
\(n^{-1/2}=h_n^{s+\beta+1/2}\) up to logarithmic factors, this term is of
order \(h_n^{s+\beta+1/2-\gamma}\) up to logarithms. It is no larger than
\(h_n^s\) when \(\gamma\le\beta+1/2\), and has strictly smaller polynomial
order when \(\gamma<\beta+1/2\). This proves the two \(L^1\) conclusions.
\end{proof}

\begin{proof}[Proof of Corollary~\ref{main-cor:concrete-laplace-rates}]
The characteristic function of \(f_0=\phiN_\tau*Q_0\) satisfies
\(|\varphi_{f_0}(t)|\le e^{-\tau_-^2t^2/2}\), so
\(f_0\in H^s(\mathbb R)\) for every fixed \(s>0\). The random variable
\(X=U+\tau Z\) has a subgaussian tail uniformly over the class.
Proposition~\ref{prop:supp-concrete-laplace-ap} verifies
Assumption~\ref{main-ass:local-ap} with \(\beta=2\),
Proposition~\ref{prop:supp-concrete-laplace-ll} verifies
Assumption~\ref{main-ass:local-ll}, and
Proposition~\ref{prop:supp-concrete-laplace-pe} verifies
Assumption~\ref{main-ass:local-pe} with \(\gamma=3/2\).
Theorem~\ref{main-thm:local-ordinary-rates} therefore gives the direct
\(L^2\) and \(L^1\) rates and the posterior \(L^1\) rate. Since
\(3/2<2+1/2\), the empirical posterior-averaging term has strictly smaller
polynomial order; at \(h_n=n^{-1/(2s+5)}\), it is \(h_n^{s+1}\) up to
logarithmic factors.
\end{proof}

\section{Deferred fixed-dimensional results}
\label{app:fixed-dimensional-results}

This section collects fixed-dimensional results used in the
fixed-dimensional analysis of the main paper but deferred here to keep
the main argument focused.

\setcounter{proposition}{0}
\setcounter{lemma}{0}

\subsection{Fixed-model consistency}
\label{app:fixed-model-consistency}

The following result records the consistency statement used under correct
fixed-dimensional specification.

\begin{proposition}[Transfer of fixed-model consistency]
\label{prop:supp-fixed-consistency}
For a fixed \(K\), suppose that \(\theta_0\in\Theta_K\), where
\(f_0=f_{\theta_0}\). Assume that the parametrization is regular at
\(\theta_0\): the model is identifiable up to label switching, and the true
component weights and scales are bounded away from the usual finite-mixture
degeneracies. After fixing one admissible label ordering, assume that:
\begin{itemize}
    \item[(i)] the convolution-likelihood estimator satisfies
    \(\widehat\theta\xrightarrow{p}\theta_0.
    \)
    \end{itemize}
Moreover, for every continuity point \(x\) of \(f_0\), assume that:
\begin{itemize}
    \item[(ii)]  the map
    \(
        \theta\mapsto f(x;\theta)
    \)
    is continuous at \(\theta_0\);

    \item[(iii)] the posterior kernels
    \(q_{f(\cdot;\theta)}(x\mid Y)\)
    are locally continuous at \(\theta_0\) in \(L^1(\Pzero)\) and satisfy the local
    stochastic-equicontinuity condition
    \(
       \sup_{\theta\in U}
       \left|
       (\Pn-\Pzero)
       \left\{
       q_{f(\cdot;\theta)}(x\mid\cdot)
       -
       q_{f_0}(x\mid\cdot)
       \right\}
       \right|
       =
       o_p(1)
    \)
    for some neighborhood \(U\) of \(\theta_0\).
\end{itemize}
Then, for every continuity point \(x\) of \(f_0\),
\[
    \widehat f(x)\xrightarrow{p}f_0(x),
    \qquad
    \widehat f_{\post}(x)\xrightarrow{p}f_0(x).
\]
Under the corresponding integrated envelope and domination conditions, the
same conclusions hold in \(L^1\) and \(L^2\).
\end{proposition}

\begin{proof}
The first-stage consistency follows immediately from conditions \emph{(i)}
and \emph{(ii)}. For the posterior-density estimator, since
\(\Pzero q_{f_0,x}=f_0(x)\), after adding and subtracting
\(\Pn q_{f_0,x}\), we write
\[
\widehat f_{\post}(x)-f_0(x)
=
(\Pn-\Pzero)q_{f_0,x}
+
\Pn\{q_{\widehat\theta,x}-q_{f_0,x}\}.
\]
The first term is \(o_p(1)\) by the law of large numbers. For the second term,
\[
\begin{aligned}
|\Pn\{q_{\widehat\theta,x}-q_{f_0,x}\}|
&\le
|(\Pn-\Pzero)\{q_{\widehat\theta,x}-q_{f_0,x}\}| \\
&\quad+
\Pzero|q_{\widehat\theta,x}-q_{f_0,x}|.
\end{aligned}
\]
The first term on the right is \(o_p(1)\) by the local stochastic
equicontinuity assumption in \emph{(iii)}, since \(\widehat\theta\in U\)
with probability tending to one. The second is \(o_p(1)\) by the local
\(L^1(\Pzero)\)-continuity in \emph{(iii)} together with condition \emph{(i)}.
Therefore,
\(
\widehat f_{\post}(x)\xrightarrow{p}f_0(x).
\)
The integrated \(L^1\) and \(L^2\) conclusions follow by the same
decomposition under the corresponding envelope and domination assumptions.
\end{proof}

\subsection{Exact population posterior correction}
\label{app:exact-posterior-correction}

Let \(\mathcal A\) denote convolution with the measurement-error density,
\[
\mathcal A[h](y)
=
\int_{\mathbb R} f_\varepsilon(y-x)h(x)\,dx,
\]
and let
\[
\mathcal A^\ast[v](x)
=
\int_{\mathbb R} f_\varepsilon(y-x)v(y)\,dy
\]
denote its \(L^2\)-adjoint. The next result gives the exact correction
underlying the local misspecification analysis.

\begin{lemma}[Exact posterior-error decomposition]
\label{lem:supp-posterior-exact-error}
Let \(f\) and \(f_0\) be densities and write
\[
e=f-f_0,
\qquad
g_f=\mathcal A[f],
\qquad
g_0=\mathcal A[f_0].
\]
Suppose that \(g_f>0\) \(\Pzero\)-almost everywhere,
\(e\in L^2(\mathbb R)\), and
\[
C_f[e]
:=
f\,\mathcal A^\ast
\left[
\frac{\mathcal A[e]}{g_f}
\right]
\in L^2(\mathbb R).
\]
Then
\[
\mathcal R_0(f)-f_0
=
e-C_f[e].
\]
Consequently,
\[
\|\mathcal R_0(f)-f_0\|_2^2
=
\|f-f_0\|_2^2
-
2\langle e,C_f[e]\rangle
+
\|C_f[e]\|_2^2.
\]
\end{lemma}

\begin{proof}
Since \(g_0-g_f=-\mathcal A[e]\) and
\(\mathcal A^\ast[1](x)=1\),
\[
\begin{aligned}
\mathcal R_0(f)
&=
f\,\mathcal A^\ast
\left[
\frac{g_0}{g_f}
\right] \\
&=
f\,\mathcal A^\ast
\left[
1+\frac{g_0-g_f}{g_f}
\right] \\
&=
f-
f\,\mathcal A^\ast
\left[
\frac{\mathcal A[e]}{g_f}
\right] \\
&=
f-C_f[e].
\end{aligned}
\]
Subtracting \(f_0\) gives the first identity, and squaring in
\(L^2(\mathbb R)\) gives the second.
\end{proof}

It follows immediately that
\[
\|\mathcal R_0(f)-f_0\|_2
<
\|f-f_0\|_2
\]
if and only if
\[
2\langle e,C_f[e]\rangle
>
\|C_f[e]\|_2^2.
\]
Thus, whether posterior reconstruction reduces the latent \(L^2\) error
depends on the direction of the misspecification.

\subsection{Conditional-information operator}
\label{app:conditional-information-operator}

For \(s\in L^2(f_0)\) and \(v\in L^2(g_0)\), define
\(
(Ks)(y)
=
\mathbb E_0\{s(X)\mid Y=y\}
\) and \(
(K^\ast v)(x)
=
\mathbb E_0\{v(Y)\mid X=x\},
\)
and let
\(
T=K^\ast K.
\)

\begin{lemma}[Conditional-information operator]
\label{lem:supp-conditional-information-operator}
The operator
\(K:L^2(f_0)\to L^2(g_0)\)
is contractive. Moreover,
\(T=K^\ast K\)
is positive and self-adjoint on \(L^2(f_0)\), with
\[
\langle s,Ts\rangle_{L^2(f_0)}
=
\|Ks\|_{L^2(g_0)}^2,
\qquad
\|Ts\|_{L^2(f_0)}
\leq
\|Ks\|_{L^2(g_0)}
\leq
\|s\|_{L^2(f_0)}.
\]
\end{lemma}

\begin{proof}
By conditional Jensen's inequality,
\[
\|Ks\|_{L^2(g_0)}^2
=
\mathbb E_0\!\left[
\{\mathbb E_0(s(X)\mid Y)\}^2
\right]
\leq
\mathbb E_0\{s(X)^2\}.
\]
For \(s\in L^2(f_0)\) and \(v\in L^2(g_0)\),
\[
\begin{aligned}
\langle Ks,v\rangle_{L^2(g_0)}
&=
\mathbb E_0\!\left[
\mathbb E_0\{s(X)\mid Y\}v(Y)
\right] \\
&=
\mathbb E_0\{s(X)v(Y)\} \\
&=
\mathbb E_0\!\left[
s(X)\mathbb E_0\{v(Y)\mid X\}
\right] \\
&=
\langle s,K^\ast v\rangle_{L^2(f_0)}.
\end{aligned}
\]
Thus \(K^\ast\) is the adjoint of \(K\). The standard properties of
\(K^\ast K\) give positivity and self-adjointness, while
\[
\|T\|
\leq
\|K^\ast\|\,\|K\|
\leq 1.
\]
The stated identities and inequalities follow.
\end{proof}

\section{Technical sieve bounds and calibration}

This section collects auxiliary approximation, entropy, localization, inverse-stability, posterior-complexity, tail, and model-selection results supporting the growing-sieve analysis. The entropy and bracketing arguments follow standard sieve empirical-process calculations (see \citep{vandervaart1998,chen2007}). These results provide sufficient conditions for the consistency and rate arguments developed in the main paper.

\subsection{Full log-likelihood control under Gaussian measurement error}
\label{app:ass-primitive-ulln-material}
The next result verifies the global likelihood condition on the whole observation line. The proof uses the positive variance contributed by the Gaussian measurement error and therefore does not require a
sieve-uniform density floor on a truncated set.

\begin{proposition}[Global log-likelihood complexity under Gaussian measurement error]
\label{prop:gaussian-global-likelihood}
Suppose that \(\varepsilon\sim N(0,\sigma_\varepsilon^2)\), with \(\sigma_\varepsilon>0\), and that
\(E_0Y^4<\infty\). Let \(\mathcal F_n\) be the regularized Gaussian-mixture sieve defined by
\eqref{main-eq:regularized-parameter-space} and~\eqref{main-Fn}, with
\(\sigma_{\max,n}\le\overline\sigma<\infty\).
Then Assumption~\ref{main-ass:primitive-ulln} holds with
\[
H_n(y)\le C\{1+(|y|+M_n)^2\},
\qquad
F_n\le C(1+M_n^2),
\]
and one may take
\[
V_n=C K_n,
\qquad
L_n=C K_n(1+M_n+\overline\sigma)\{1+\log(\pi_{\min,n}^{-1})\}.
\]
Consequently, with \(\Gamma_n=V_n\log L_n\),
\[
\sup_{f\in\mathcal F_n}
\left|
(\Pn-\Pzero)(\ell_f-\ell_{f_n^\circ})
\right|
=O_p(c_n^{\mathrm G}),
\]
where
\[
c_n^{\mathrm G}
=
F_n\left\{
\sqrt{\frac{\Gamma_n}{n}}
+
\frac{\Gamma_n}{n}
\right\}.
\]
In particular, if \(K_n\asymp n^p\), \(\pi_{\min,n}^{-1}\) grows at most polynomially, \(0<p<1\), and
\(M_n\asymp(\log n)^m\), then
\[
c_n^{\mathrm G}
=O\left\{
n^{(p-1)/2}(\log n)^{2m+1/2}
+
n^{p-1}(\log n)^{2m+1}
\right\}
=o(1).
\]
No restriction on the rate at which \(\sigma_{\min,n}\) decreases is needed for this likelihood bound.
\end{proposition}
\begin{proof}
For a \(K\)-component latent Gaussian mixture, the observed density is
\[
g_\theta(y)=\sum_{k=1}^K\pi_k\phiN(y;\mu_k,v_k),
\qquad
v_k=\sigma_k^2+\sigma_\varepsilon^2.
\]
Uniformly over the sieve,
\[
\sigma_\varepsilon^2\le v_k\le\sigma_\varepsilon^2+\overline\sigma^2=: \overline v.
\]
Hence
\[
(2\pi\overline v)^{-1/2}
\exp\left\{-\frac{(|y|+M_n)^2}{2\sigma_\varepsilon^2}\right\} \le g_\theta(y)\le(2\pi\sigma_\varepsilon^2)^{-1/2}.
\]
It follows that
\[
|\ell_\theta(y)|\le C\{1+(|y|+M_n)^2\}.
\]
The difference class \(\mathcal D_n\) therefore has an envelope of the same order, and \(E_0Y^4<\infty\)
gives \(\|H_n\|_{\Pzero,2}\le C(1+M_n^2)=F_n\).
Parameterize the weights by logits \(a_1,\ldots,a_{K-1}\), with \(a_j=\log(\pi_j/\pi_K)\), and let
\[
r_k(y)=\frac{\pi_k\phiN(y;\mu_k,v_k)}{g_\theta(y)}.
\]
Direct differentiation gives
\[
\partial_{a_j}\ell_\theta(y)=r_j(y)-\pi_j,
\]
\[
\partial_{\mu_j}\ell_\theta(y)=r_j(y)\frac{y-\mu_j}{v_j},
\]
and
\[
\partial_{\sigma_j}\ell_\theta(y)
=r_j(y)\sigma_j\left\{\frac{(y-\mu_j)^2}{v_j^2}-\frac1{v_j}\right\}.
\]
Because \(0\le r_j\le1\), \(|\mu_j|\le M_n\), \(v_j\ge\sigma_\varepsilon^2\), and
\(\sigma_j\le\overline\sigma\), every score coordinate is bounded by
\[
B_n(y)=C\{1+(|y|+M_n)^2\}.
\]
For two parameters in the same \(K\)-component model, the mean-value theorem yields
\[
|\ell_\theta(y)-\ell_{\widetilde\theta}(y)|
\le C K B_n(y)\|\eta-\widetilde\eta\|_\infty,
\qquad \eta=(a,\mu,\sigma).
\]
Use a coordinate grid of mesh \(\delta=\epsilon/(CK)\). Brackets centered at the grid log-likelihoods
with half-width \(CK\delta B_n\) have \(L^2(\Pzero)\)-width at most \(\epsilon F_n\). Since the logits
range over an interval of length \(O\{1+\log(\pi_{\min,n}^{-1})\}\), the means over an interval of
length \(2M_n\), and the scales over a bounded interval, the union over \(K\le K_n\) satisfies
\[
\log N_{[]}(
\epsilon F_n,\mathcal D_n,L^2(\Pzero))
\le
C K_n\log\left\{
\frac{C K_n(1+M_n+\overline\sigma)\{1+\log(\pi_{\min,n}^{-1})\}}
{\epsilon}
\right\}.
\]
This is the entropy condition with \(V_n, L_n\). The finite-dimensional parameter classes
are separable; otherwise, the following bound is read in outer probability. The standard
\(L^2(\Pzero)\)-bracketing entropy integral gives
\[
\sup_{d\in\mathcal D_n}|(\Pn-\Pzero)d|
=O_p\left[
F_n\sqrt{\frac{V_n\log L_n}{n}}
\right].
\]
By the definition of \(c_n^{\mathrm G}\), this term is \(O_p(c_n^{\mathrm G})\). its additional linear term is retained as a
conservative remainder and is asymptotically smaller under the stated calibrations. Substituting the polynomial and logarithmic calibrations establishes the claimed empirical-process bound. Notice that the argument never divides by a
global density floor and never uses \(\sigma_{\min,n}^{-1}\).
\end{proof}

\subsection{Observed norm conversion from KL control}
\label{app:observed-norm-conversion}

The following lemma converts the observed-domain Kullback--Leibler rate
into the norm bounds used by the inverse-stability arguments in the main
paper.

\begin{lemma}[From observed KL to observed sup-norm and \(L^2\) control]
\label{lem:supp-kl-to-l2}
Suppose that
\(\KL(g_{f_0},g_{\widehat f})^{1/2}=O_p(e_n)\)
and that \(f_\varepsilon\) is globally Lipschitz, with
\(|f_\varepsilon(u)-f_\varepsilon(v)|
\le L_\varepsilon|u-v|\) for all \(u,v\in\mathbb R\), where
\(L_\varepsilon<\infty\). Then
\[
\|g_{\widehat f}-g_{f_0}\|_1=O_p(e_n),
\qquad
\|g_{\widehat f}-g_{f_0}\|_2=O_p(e_n),
\qquad
\|g_{\widehat f}-g_{f_0}\|_\infty=O_p(e_n^{1/2}).
\]
The same conclusions hold for the EM implementation with \(e_n\)
replaced by \(e_n^{\mathrm{EM}}\).
\end{lemma}

\begin{proof}
Pinsker's inequality gives
\(\|g_{\widehat f}-g_{f_0}\|_1
\le \{2\KL(g_{f_0},g_{\widehat f})\}^{1/2}=O_p(e_n)\).

Because a globally Lipschitz density is bounded,
\(g_f(y)\le\|f_\varepsilon\|_\infty\) for every latent density \(f\).
Hence the Hellinger--Kullback--Leibler comparison gives
\(\|g_{\widehat f}-g_{f_0}\|_2^2
\le 4\|f_\varepsilon\|_\infty
H^2(g_{\widehat f},g_{f_0})\),
and therefore
\(\|g_{\widehat f}-g_{f_0}\|_2=O_p(e_n)\).

Moreover, every convolved density \(g_f=f*f_\varepsilon\) is
\(L_\varepsilon\)-Lipschitz, since
\(|g_f(y)-g_f(y')|\le L_\varepsilon|y-y'|\).
Thus \(g_{\widehat f}-g_{f_0}\) is
\(2L_\varepsilon\)-Lipschitz. The one-dimensional
Gagliardo--Nirenberg inequality yields
\[
\|g_{\widehat f}-g_{f_0}\|_\infty
\le
\{4L_\varepsilon
\|g_{\widehat f}-g_{f_0}\|_1\}^{1/2}
=
O_p(e_n^{1/2}).
\]
The same argument applies to the EM implementation after replacing
\(e_n\) by \(e_n^{\mathrm{EM}}\).
\end{proof}

\subsection{Global observed-domain analysis for shrinking-scale sieves}
\label{app:global-shrinking-scale}

The results below complement the main growing-sieve theory with a broad
observed-domain analysis for Gaussian-mixture sieves whose component scales
may decrease with the approximation resolution.

\begin{proposition}[Global observed-scale approximation]
\label{prop:supp-obs-scale-appr}
Assume that \(f_0\) belongs to a class of densities for which there exist
constants \(s>0\), \(\kappa_2\ge0\), \(c_0>0\), and \(C<\infty\) such
that, for every sufficiently small \(h>0\), there is a finite Gaussian
mixture \(f_h^\circ=\sum_{k=1}^{K_h}
\pi_k\phiN(\cdot;\mu_k,\sigma_k^2)\) satisfying
\(K_h\le Ch^{-1}\log(h^{-1})\),
\(\sigma_k\in[c_0h,C]\),
\(|\mu_k|\le C\{\log(h^{-1})\}^{1/2}\), and
\(\KL(f_0,f_h^\circ)\le
Ch^{2s}\{\log(h^{-1})\}^{\kappa_2}\).

Let \(h_n\to0\) and suppose that, for all sufficiently large \(n\), the
Gaussian-mixture sieve satisfies
\(K_n\ge Ch_n^{-1}\log(h_n^{-1})\),
\(\sigma_{\min,n}\le c_0h_n\),
\(M_n\ge C\{\log(h_n^{-1})\}^{1/2}\), and an upper scale bound compatible
with the approximation class, so that \(\sigma_{\max,n}\ge C\)
eventually. Then Assumption~\ref{main-ass:primitive-approximation} holds
with
\(b_n\asymp h_n^s\{\log(h_n^{-1})\}^{\kappa_2/2}\).
\end{proposition}

\begin{proof}
For every sufficiently small \(h>0\), the assumed approximation property
provides a finite Gaussian mixture \(f_h^\circ\) satisfying the stated
component, scale, location, and KL bounds. Such bounds are standard under
the corresponding smoothness, tail, and local-regularity conditions; see
\citet{kruijer2010} and \citet{ghosalvdv2001}.

Take \(h=h_n\). The sieve-size and parameter-range conditions imply
\(f_{h_n}^\circ\in\mathcal F_n\) for all sufficiently large \(n\).
Convolution with \(f_\varepsilon\) is a Markov kernel, so the
data-processing inequality gives
\[
\KL(g_{f_0},g_{f_{h_n}^\circ})
\le
\KL(f_0,f_{h_n}^\circ)
\le
Ch_n^{2s}\{\log(h_n^{-1})\}^{\kappa_2}.
\]
Taking \(f_n^\circ=f_{h_n}^\circ\) and
\(b_n\asymp h_n^s\{\log(h_n^{-1})\}^{\kappa_2/2}\)
proves Assumption~\ref{main-ass:primitive-approximation}.
\end{proof}
\begin{proposition}[Broad global observed-domain rate under Gaussian likelihood control]
\label{prop:supp-self-contained-en}
Suppose that the conditions of
Proposition~\ref{prop:supp-obs-scale-appr} hold and that the measurement
error is Gaussian. Assume additionally that \(\E_0Y^4<\infty\),
\(K_n\asymp h_n^{-1}\{\log(h_n^{-1})\}^{a_K}\) for some \(a_K\ge1\),
\(M_n\asymp\{\log(h_n^{-1})\}^{1/2}\), and retain the sieve-membership
restrictions of Proposition~\ref{prop:supp-obs-scale-appr}, including
\(\sigma_{\min,n}\le c_0h_n\). Suppose also that
\(K_n\pi_{\min,n}\le1\), that \(\pi_{\min,n}^{-1}\) grows at most
polynomially, and that Assumption~\ref{main-ass:primitive-penalty} holds.

Then
\(e_n=O[h_n^s\{\log(h_n^{-1})\}^{\kappa_2/2}
+n^{-1/4}h_n^{-1/4}(\log n)^C]\).
With \(h_n=n^{-1/(4s+1)}\), up to a fixed logarithmic calibration,
\[
\KL(g_{f_0},g_{\widehat f})^{1/2}
=
O_p\{n^{-s/(4s+1)}(\log n)^C\},
\qquad
\|g_{\widehat f}-g_{f_0}\|_2
=
O_p\{n^{-s/(4s+1)}(\log n)^C\}.
\]
\end{proposition}

\begin{proof}
The fourth-moment condition allows
Proposition~\ref{prop:gaussian-global-likelihood} to be applied, while
Proposition~\ref{prop:supp-obs-scale-appr} gives
\(b_n\asymp h_n^s\{\log(h_n^{-1})\}^{\kappa_2/2}\).
Under the stated calibration,
\(F_n=O\{\log(h_n^{-1})\}\) and
\(V_n\log L_n=O\{h_n^{-1}(\log n)^C\}\), so
Proposition~\ref{prop:gaussian-global-likelihood} yields
\(c_n^{1/2}=O\{n^{-1/4}h_n^{-1/4}(\log n)^C\}\).

Along the approximating sequence, the square root of the normalized
iterated-log BIC penalty is no larger than this empirical-process term.
Substitution into \(e_n\) therefore gives the stated bound. Balancing
\(h_n^s\) with \(n^{-1/4}h_n^{-1/4}\) yields
\(h_n=n^{-1/(4s+1)}\), up to logarithmic factors.
Proposition~\ref{main-prop:observed-kl-rate} then gives the KL rate, and
Lemma~\ref{lem:supp-kl-to-l2} gives the corresponding observed \(L^2\)
rate.
\end{proof}

The likelihood bound places no lower-rate restriction on
\(\sigma_{\min,n}\): Gaussian measurement error gives every induced
observed component a variance bounded below by \(\sigma_\varepsilon^2\),
irrespective of the latent component scale.
\subsection{Localization tools for the sharper Gaussian rate}
\label{app:gaussian-localization-tools}

\begin{lemma}[Second log-moment contraction under marginalization]
\label{lem:marginal-log-moment}
Let \(P\) and \(Q\) be mutually absolutely continuous probability measures on
\(\mathcal Z\times\mathcal Y\), and let \(P_Y,Q_Y\) be their \(\mathcal Y\)-marginals. Write
\(L=\log(dP/dQ)\) and \(\ell=\log(dP_Y/dQ_Y)\). If \(E_PL^2+E_QL^2<\infty\), then
\[
E_{P_Y}\ell^2\le E_PL^2+E_QL^2.
\]
\end{lemma}
\begin{proof}
Conditional expectation under \(P\) gives \(e^{-\ell(Y)}=E_P\{e^{-L}\mid Y\}\). Jensen's inequality
implies \(\ell(Y)\le E_P(L\mid Y)\), and hence
\(\ell_+(Y)\le E_P(L_+\mid Y)\). Conditional Jensen therefore gives
\(E_{P_Y}\ell_+^2\le E_PL_+^2\). Interchanging \(P\) and \(Q\) yields
\(E_{Q_Y}\ell_-^2\le E_QL_-^2\). On \(\{\ell<0\}\), one has
\(dP_Y/dQ_Y=e^\ell\le1\), so \(E_{P_Y}\ell_-^2\le E_{Q_Y}\ell_-^2\). Adding the positive and
negative parts proves the statement.
\end{proof}

\begin{lemma}[Localized comparison for a penalized sieve likelihood]
\label{lem:penalized-likelihood-localization}
Let \(\mathcal G_n\) be a pointwise measurable class of densities with true density \(g_0\). Suppose
that \(\widehat g_n,g_n^\circ\in\mathcal G_n\) satisfy
\[
\Pn\log\widehat g_n\ge\Pn\log g_n^\circ-\lambda_n.
\]
Let \(\epsilon_n\downarrow0\) with \(n\epsilon_n^2\to\infty\), and assume that, for fixed constants
\(C_a,C_\lambda<\infty\),
\(\KL(g_0,g_n^\circ)+\V(g_0,g_n^\circ)\le C_a\epsilon_n^2\) and
\(\lambda_n\le C_\lambda\epsilon_n^2\). Let \(c_{\mathrm{LR}}>0\) denote the universal likelihood-ratio
constant in Theorem~1 of \citet{wongshen1995}. Choose a fixed \(A>0\) such that
\(c_{\mathrm{LR}}A^2>C_a+C_\lambda+1\). Suppose that, for all sufficiently large \(n\),
\[
\int_{c_0(A\epsilon_n)^2}^{c_1A\epsilon_n}
\left\{1+H_{[]}(u,\mathcal G_n,\dH)\right\}^{1/2}du
\le
c_{\mathrm{WS}}\sqrt n\,(A\epsilon_n)^2,
\]
where \(c_0,c_1,c_{\mathrm{WS}}>0\) are the constants in that likelihood-ratio inequality. Then
\(\dH(\widehat g_n,g_0)=O_p(\epsilon_n)\). The same statement holds in outer probability under the
usual separability convention.
\end{lemma}
\begin{proof}
Write \(Z_n(g)=\Pn\log(g/g_0)\). Under the assumption, Theorem~1 of
\citet{wongshen1995} gives constants \(c_{\mathrm{LR}},C_{\mathrm{LR}}>0\) such that
\[
\Pzero^n\left\{
\sup_{g\in\mathcal G_n:\,\dH(g,g_0)\ge A\epsilon_n}Z_n(g)
\ge-c_{\mathrm{LR}}A^2\epsilon_n^2
\right\}
\le4\exp(-C_{\mathrm{LR}}nA^2\epsilon_n^2).
\]
For \(W_i=\log\{g_n^\circ(Y_i)/g_0(Y_i)\}\), one has
\(E_0W_i=-\KL(g_0,g_n^\circ)\) and \(E_0W_i^2=\V(g_0,g_n^\circ)\). Hence Chebyshev's inequality gives
\[
\Pzero^n\{Z_n(g_n^\circ)<-(C_a+1)\epsilon_n^2\}
\le\frac{C_a}{n\epsilon_n^2}=o(1).
\]
By~assumption, with probability tending to one,
\(Z_n(\widehat g_n)\ge-(C_a+C_\lambda+1)\epsilon_n^2\). Since
\(c_{\mathrm{LR}}A^2>C_a+C_\lambda+1\), the complement of the event obtained by \citet{wongshen1995} forces \(\dH(\widehat g_n,g_0)<A\epsilon_n\). This proves the claim.
\end{proof}

\subsection{Hellinger bracketing for the observed fixed-scale Gaussian sieve}
\label{app:gaussian-fixed-scale-hellinger}

For \(1\le K\le K_n\), let
\[
\mathcal F_{K,n}^G
=
\left\{
f\in\mathcal F_n^G:
f\text{ has }K\text{ mixture components}
\right\},
\qquad
\mathcal G_{K,n}^G
=
\left\{
g_f:f\in\mathcal F_{K,n}^G
\right\},
\]
and write
\[
\mathcal G_n^G
=
\bigcup_{1\le K\le K_n}\mathcal G_{K,n}^G.
\]

Throughout this subsection, a Hellinger bracket \([l,u]\) has nonnegative integrable endpoints
satisfying \(l\le g\le u\); the endpoints need not themselves integrate to one.

\begin{lemma}[Hellinger bracketing entropy of the observed fixed-scale Gaussian sieve]
\label{lem:gaussian-fixed-scale-hellinger}
Suppose that the measurement error is Gaussian with \(\sigma_\varepsilon>0\), and that the component
scales satisfy the fixed bounds $0<\underline\sigma_G\le\sigma_k\le\overline\sigma_G<\infty$.
There exist constants \(C_1,C_2<\infty\), depending only on
\(\underline\sigma_G\), \(\overline\sigma_G\), and \(\sigma_\varepsilon\), such that, for every
\(K\ge2\) and \(0<\epsilon<1\),
\[
\log N_{[]}\left(
\epsilon,\mathcal G_{K,n}^G,\dH
\right)
\le
C_1K
\log\left\{
\frac{C_2K(1+M_n+\overline\sigma_G)}{\epsilon^2}
\right\}.
\]
Consequently,
\[
\log N_{[]}\left(
\epsilon,\mathcal G_n^G,\dH
\right)
\le
\log K_n
+
C_1K_n
\log\left\{
\frac{C_2K_n(1+M_n+\overline\sigma_G)}{\epsilon^2}
\right\}.
\]
\end{lemma}

\begin{proof}
For fixed \(K\), write
\(g_\theta(y)=\sum_{k=1}^K\pi_kp_{\mu_k,\sigma_k}(y)\), where
\(p_{\mu,\sigma}(y)=\phiN(y;\mu,\sigma^2+\sigma_\varepsilon^2)\). Put
\(\underline v^2=\underline\sigma_G^2+\sigma_\varepsilon^2\) and
\(\overline v^2=\overline\sigma_G^2+\sigma_\varepsilon^2\). For every \(\mu_0\in\mathbb R\),
\(|\mu-\mu_0|\le1\), and \(\sigma\in[\underline\sigma_G,\overline\sigma_G]\), the inequality
\((y-\mu)^2\ge2^{-1}(y-\mu_0)^2-1\) gives
\[
p_{\mu,\sigma}(y)\le C\exp\{-c(y-\mu_0)^2\}.
\]
The component derivatives satisfy
\(\partial_\mu p_{\mu,\sigma}(y)=(y-\mu)p_{\mu,\sigma}(y)/(\sigma^2+\sigma_\varepsilon^2)\) and
\(\partial_\sigma p_{\mu,\sigma}(y)=\sigma\{(y-\mu)^2/(\sigma^2+\sigma_\varepsilon^2)^2
-1/(\sigma^2+\sigma_\varepsilon^2)\}p_{\mu,\sigma}(y)\). Thus,
\begin{align*}
\sup_{|\mu-\mu_0|\le1,\,\underline\sigma_G\le\sigma\le\overline\sigma_G}
|\partial_\mu p_{\mu,\sigma}(y)|
&\le C\{1+|y-\mu_0|\}e^{-c(y-\mu_0)^2},
\\
\sup_{|\mu-\mu_0|\le1,\,\underline\sigma_G\le\sigma\le\overline\sigma_G}
|\partial_\sigma p_{\mu,\sigma}(y)|
&\le C\{1+(y-\mu_0)^2\}e^{-c(y-\mu_0)^2}.
\end{align*}
The right-hand sides of the three envelope bounds above have uniformly bounded integrals.

Use \(K-1\) free weight coordinates, with \(\pi_K=1-\sum_{j<K}\pi_j\). Since
\(\partial_{\pi_j}g_\theta=p_{\mu_j,\sigma_j}-p_{\mu_K,\sigma_K}\), every free-weight derivative has
an integrable local envelope with integral at most two. Partition the admissible free-coordinate set
into sup-norm cells of side \(\delta\le1\), and select a representative \(\theta_m\) from each nonempty
cell. The intersection of a cell with the weight simplex and the scale-location rectangle is convex.
Summing the local envelopes of the \(3K-1\) free coordinates therefore gives a function \(H_m\) such
that
\[
\int H_m(y)\,dy\le CK
\]
uniformly over all cells, \(K\), and \(n\), and the integral form of the mean-value theorem yields
\(|g_\theta(y)-g_{\theta_m}(y)|\le\delta H_m(y)\) throughout the cell.

Define \(l_m=(g_{\theta_m}-\delta H_m)_+\) and \(u_m=g_{\theta_m}+\delta H_m\). Then
\(l_m\le g_\theta\le u_m\), and
\[
\int(\sqrt{u_m}-\sqrt{l_m})^2
\le\int(u_m-l_m)
\le2\delta\int H_m
\le CK\delta.
\]
Choosing \(\delta=c\epsilon^2/K\) therefore produces an \(\epsilon\)-bracket. The \(K-1\) free
weights range over a subset of \([0,1]^{K-1}\), the \(K\) locations range over an interval of length
\(2M_n\), and the \(K\) scales range over a fixed compact interval. The number of nonempty cells is
bounded by
\[
\left(\frac{C}{\delta}\right)^{K-1}
\left\{\frac{C(1+M_n)}{\delta}\right\}^{K}
\left\{\frac{C(1+\overline\sigma_G)}{\delta}\right\}^{K}.
\]
Taking logarithms and substituting \(\delta=c\epsilon^2/K\) proves
the first claim. The case \(K=1\) is absorbed by increasing the constants.
Finally,
\(N_{[]}(\epsilon,\mathcal G_n^G,\dH)\le
\sum_{K=1}^{K_n}N_{[]}(\epsilon,\mathcal G_{K,n}^G,\dH)\), which proves
the final statement.
\end{proof}

\subsection{Fixed-scale Gaussian approximation and observed-domain rates}
\label{app:gaussian-fixed-scale-observed-rates}

This subsection collects the approximation and observed-domain likelihood
bounds used by the fixed-scale Gaussian results in the main paper.
\begin{proposition}[Gaussian-mixture approximation]
\label{prop:supp-gaussian-smoothed-klv-approximation}
Let \(f_0=\phi_\tau*Q_0\in\mathcal C_G\), and write
$g_0=f_0*\phi_{\sigma_\varepsilon}.$
For every \(M\geq1\), integer \(K\geq2\), and \(\pi_{\min}>0\) such that \(K\pi_{\min}\leq1/2\), there exists a \(K\)-component mixture
\[
f_{M,K}^\circ
=
\sum_{k=1}^K
\pi_k^\circ\phiN(\cdot;\mu_k^\circ,\tau^2)
\]
with \(|\mu_k^\circ|\leq M\) and \(\pi_k^\circ\geq\pi_{\min}\) such that
\begin{equation}
\label{eq:gaussian-smoothed-kl-approximation}
\KL(g_0,g_{f_{M,K}^\circ})
\leq
C\left\{
\frac{M^2}{K^2}
+e^{-cM^2}
+K\pi_{\min}
\right\}.
\end{equation}
Moreover, with \(g_{M,K}^\circ=g_{f_{M,K}^\circ}\) and
$\V(p,q)=\int p(y)\{\log[p(y)/q(y)]\}^2dy,$
the mixture satisfies
\begin{equation}
\label{eq:gaussian-klv-approximation}
\KL(g_0,g_{M,K}^\circ)
+
\V(g_0,g_{M,K}^\circ)
\leq
C\left\{
\frac{M^2}{K^2}
+
\frac{M^4}{K^4}
+
 e^{-cM^2}
+
K\pi_{\min}(1+M^4)
\right\}.
\end{equation}

\end{proposition}

\begin{proof}
Let \(U\sim Q_0\). Choose \(K\) equally spaced grid points in
\([-M,M]\), clip \(U\) to this interval, and let \(q_{M,K}(U)\) be a
nearest grid point, with any fixed rule for breaking ties. For
\(m\in\{2,4\}\),
\[
\E\left|U-q_{M,K}(U)\right|^m
\leq
C\left\{
\frac{M^m}{K^m}
+
\E(|U|-M)_+^m
\right\}.
\]
The subgaussian tail bound on \(Q_0\) gives
\[
\E(|U|-M)_+^m
\leq
Ce^{-cM^2},
\qquad m\in\{2,4\}.
\]
Let \(Q_{M,K}\) denote the law of \(q_{M,K}(U)\), and set
$
s_0^2=\tau^2+\sigma_\varepsilon^2.$
The true and discretized observed variables can be coupled as
\[
Y=U+s_0Z,
\qquad
Y^\circ=q_{M,K}(U)+s_0Z,
\]
where \(Z\sim N(0,1)\) is independent of \(U\). Conditional on \(U\),
\[
\KL\left\{
N(U,s_0^2),
N(q_{M,K}(U),s_0^2)
\right\}
=
\frac{\{U-q_{M,K}(U)\}^2}{2s_0^2}.
\]
The data-processing inequality therefore yields
\[
\KL(g_0,\phi_{s_0}*Q_{M,K})
\leq
C\left\{
\frac{M^2}{K^2}
+
 e^{-cM^2}
\right\}.
\]

To enforce the weight floor, let \(H_{M,K}\) be the uniform distribution
on the same grid and define
\[
\alpha=K\pi_{\min},
\qquad
Q_{M,K}^\dagger
=
(1-\alpha)Q_{M,K}
+
\alpha H_{M,K}.
\]
Because \(\alpha/K=\pi_{\min}\), every grid point has weight at least
\(\pi_{\min}\). Moreover,
\[
\phi_{s_0}*Q_{M,K}^\dagger
\geq
(1-\alpha)\phi_{s_0}*Q_{M,K},
\]
so
\[
\KL(g_0,\phi_{s_0}*Q_{M,K}^\dagger)
\leq
\KL(g_0,\phi_{s_0}*Q_{M,K})
-
\log(1-\alpha).
\]
Since \(\alpha\leq1/2\),
 we have $-\log(1-\alpha)\leq2\alpha$, and it follows that
\[
\KL(g_0,\phi_{s_0}*Q_{M,K}^\dagger)
\leq
C\left\{
\frac{M^2}{K^2}
+
 e^{-cM^2}
+
K\pi_{\min}
\right\},
\]
which proves~\eqref{eq:gaussian-smoothed-kl-approximation}.

We now control the second log-likelihood moment for the same
approximating mixture. Let \(B\sim\operatorname{Bernoulli}(\alpha)\)
and \(W\sim H_{M,K}\) be independent of \(U\), and define
\[
T
=
(1-B)q_{M,K}(U)+BW,
\qquad
\Delta=U-T.
\]
The law of \(T\) is \(Q_{M,K}^\dagger\). Consider two joint laws on
\((U,B,W,Y)\) with the same marginal distribution for \((U,B,W)\):
under \(P\), $Y=U+s_0Z$, whereas under \(Q\), $Y=T+s_0Z$.

Their \(Y\)-marginals are \(g_0\) and
\(\phi_{s_0}*Q_{M,K}^\dagger\), respectively. Under \(P\), the joint
log-likelihood ratio is
\[
L
=
\frac{\Delta Z}{s_0}
+
\frac{\Delta^2}{2s_0^2},
\]
whereas under \(Q\),
\[
L
=
\frac{\Delta Z}{s_0}
-
\frac{\Delta^2}{2s_0^2}.
\]
Consequently,
\[
\E_P(L^2)
=
\E_Q(L^2)
=
\E\left\{
\frac{\Delta^2}{s_0^2}
+
\frac{\Delta^4}{4s_0^4}
\right\}.
\]
Lemma~\ref{lem:marginal-log-moment} and data processing for
Kullback--Leibler divergence give
\[
\V(g_0,\phi_{s_0}*Q_{M,K}^\dagger)
\leq
2\E\left\{
\frac{\Delta^2}{s_0^2}
+
\frac{\Delta^4}{4s_0^4}
\right\},
\]
and
\[
\KL(g_0,\phi_{s_0}*Q_{M,K}^\dagger)
\leq
\frac{\E\Delta^2}{2s_0^2}.
\]

For \(m\in\{2,4\}\), conditioning on \(B\) gives
\[
\E|\Delta|^m
\leq
(1-\alpha)
\E\left|U-q_{M,K}(U)\right|^m
+
\alpha\E|U-W|^m.
\]
The subgaussian assumption implies \(\E|U|^4\leq C\) uniformly over
\(\mathcal C_G\), while \(|W|\leq M\). Therefore,
\[
\E|\Delta|^m
\leq
C\left\{
\frac{M^m}{K^m}
+
 e^{-cM^2}
+
\alpha(1+M^m)
\right\},
\qquad m\in\{2,4\}.
\]
Substituting the preceding moment bound into the marginal log-moment inequality above, and using
\(\alpha=K\pi_{\min}\), proves~\eqref{eq:gaussian-klv-approximation}.

\end{proof}
The constants \(C,c>0\) may depend on the fixed class parameters and on
\(\sigma_\varepsilon\), but not on \(f_0,M,K\), or \(\pi_{\min}\). If
\(M/K\leq1\), the first two terms on the right of
\eqref{eq:gaussian-klv-approximation} are bounded by \(CM^2/K^2\).
Finally, \(f_{M,K}^\circ\in\mathcal F_n^G\) whenever
\(K\leq K_n\), \(M\leq M_n\), and the scale interval of
\(\mathcal F_n^G\) contains \([\underline\tau,\overline\tau]\).

\begin{proposition}[Global observed-domain rate]
\label{prop:supp-gaussian-smoothed-observed-rate}
Suppose that \(f_0\in\mathcal C_G\), and that the estimator is computed over \(\mathcal F_n^G\) with
$K_n
=
\max\left\{
2,
\left\lfloor n^{1/5}(\log n)^{-1/5}\right\rfloor
\right\},
\qquad
M_n=C_M\sqrt{\log n},
\pi_{\min,n}=n^{-u},$
where \(u\geq3/5\), \(K_n\pi_{\min,n}\leq1/2\) for all sufficiently large \(n\), and \(C_M\) is sufficiently large. Under the iterated-log BIC penalty and exact global maximization,
\[
\KL(g_0,g_{\widehat f})^{1/2}
=
O_p\left\{
n^{-1/5}(\log n)^{7/10}
\right\}.
\]
Under the conditions of Lemma~\ref{lem:supp-kl-to-l2},
\[
\|g_{\widehat f}-g_0\|_2
=
O_p\left\{
n^{-1/5}(\log n)^{7/10}
\right\}.
\]
\end{proposition}

\begin{proof}
Because \(f_0\in\mathcal C_G\), the observed distribution is
subgaussian and, in particular, \(\E_0Y^4<\infty\). Hence
Proposition~\ref{prop:gaussian-global-likelihood} applies.

Use Proposition~\ref{prop:supp-gaussian-smoothed-klv-approximation} with
$K=K_n,
M=M_n$, and $\pi_{\min}=\pi_{\min,n}.$
Let \(c_{\mathrm{app}}>0\) be the exponential-tail constant in
\eqref{eq:gaussian-smoothed-kl-approximation}. Since \(u\geq3/5\),
\[
K_n\pi_{\min,n}
=
o\left\{
n^{-2/5}(\log n)^{7/5}
\right\}.
\]
Choosing \(C_M\) so that \(c_{\mathrm{app}}C_M^2>2/5\) gives
\[
e^{-c_{\mathrm{app}}M_n^2}
=
o(n^{-2/5}).
\]
Therefore Assumption~\ref{main-ass:primitive-approximation} holds with
\[
b_n
=
O\left(\frac{M_n}{K_n}\right)
=
O\left\{
n^{-1/5}(\log n)^{7/10}
\right\}.
\]

For the fixed-scale sieve,
Proposition~\ref{prop:gaussian-global-likelihood} gives
\[
F_n=O(\log n),
\qquad
V_n=O(K_n),
\qquad
\log L_n=O(\log n).
\]
Since
$K_n
\asymp
n^{1/5}(\log n)^{-1/5}$,
the square-root part of the empirical-process bound is
\[
F_n
\sqrt{\frac{V_n\log L_n}{n}}
=
O\left\{
n^{-2/5}(\log n)^{7/5}
\right\},
\]
whereas the linear part is
\[
F_n
\frac{V_n\log L_n}{n}
=
O\left\{
n^{-4/5}(\log n)^{9/5}
\right\}.
\]
Consequently,
\[
c_n^{1/2}
=
O\left\{
n^{-1/5}(\log n)^{7/10}
\right\}.
\]

Along the approximating sequence, the square root of the normalized
iterated-log BIC penalty is
\[
O\left[
\left(\frac{K_n\log n}{n}\right)^{1/2}
\{\mathsf L^{\circ3}(n)\}^{1/2}
\right]
=
O\left\{
n^{-2/5}(\log n)^{2/5}
\{\mathsf L^{\circ3}(n)\}^{1/2}
\right\},
\]
which is smaller in polynomial order. Proposition~\ref{main-prop:observed-kl-rate}
therefore yields
\[
\KL(g_0,g_{\widehat f})^{1/2}
=
O_p\left\{
n^{-1/5}(\log n)^{7/10}
\right\}.
\]
Lemma~\ref{lem:supp-kl-to-l2} gives the same order for
\(\|g_{\widehat f}-g_0\|_2\).
\end{proof}

\subsection{Fixed-scale Gaussian inverse and posterior bounds}
\label{app:gaussian-fixed-scale-inverse-posterior}

The following results provide the inverse-stability, latent-tail, and
posterior empirical-process bounds used by the fixed-scale Gaussian
theorems in the main paper.

\begin{lemma}[Gaussian analytic inverse stability]
\label{lem:supp-gaussian-analytic-stability}
Let \(f_1,f_2\) be densities satisfying
\(|\varphi_{f_j}(t)|\leq\exp(-\underline\sigma_G^2t^2/2)\) for
\(t\in\mathbb R\) and \(j=1,2\), and let
\(g_j=f_j*\phi_{\sigma_\varepsilon}\). Define
\(d=\|g_1-g_2\|_2\) and
\(\vartheta=\underline\sigma_G^2
(\underline\sigma_G^2+\sigma_\varepsilon^2)^{-1}\). There exist constants
\(C,d_0>0\), depending only on \(\underline\sigma_G\),
\(\sigma_\varepsilon\), and the Fourier convention, such that, whenever
\(0<d\leq d_0\),
\[
\|f_1-f_2\|_2
\leq
C d^\vartheta\{\log(d^{-1})\}^{-(1-\vartheta)/4}.
\]
If \(d=0\), then \(f_1=f_2\). Possibly after enlarging \(C\), the
coarser bound \(\|f_1-f_2\|_2\leq Cd^\vartheta\) holds for all
\(d\geq0\).
\end{lemma}

\begin{proof}
Set \(a=\underline\sigma_G^2\), \(b=\sigma_\varepsilon^2\), and
\(L=\log(d^{-1})\). The claim for \(d\) bounded away from zero follows
after enlarging \(C\), so assume that \(L\) is sufficiently large.
Plancherel's identity, Gaussian deconvolution on \(|t|\leq T\), and the
analytic envelope on \(|t|>T\) give
\[
\|f_1-f_2\|_2^2
\leq
C e^{bT^2}d^2+C(1+T)^{-1}e^{-aT^2}.
\]
Choose \(T^2=\{2L-(\log L)/2\}(a+b)^{-1}\). Then both terms on the
right are of order
\(d^{2a/(a+b)}L^{-b/[2(a+b)]}\). Since
\(\vartheta=a(a+b)^{-1}\), taking square roots gives the stated bound.
If \(d=0\), injectivity of Gaussian convolution gives \(f_1=f_2\); the
coarser bound for larger \(d\) follows from the uniform Fourier envelope.
\end{proof}

\begin{lemma}[Latent \(L^1\) conversion]
\label{lem:supp-gaussian-smoothed-l1-transfer}
Let \(f_0\in\mathcal C_G\), and let
\(\widetilde f_n\in\mathcal F_n^G\) be possibly random. Suppose that
\(M_n=C_M\sqrt{\log n}\) and
\(\|\widetilde f_n-f_0\|_2=O_p(r_n)\). For every fixed \(D>0\), one can
choose \(C_A\) sufficiently large so that, with
\(A_n=M_n+C_A\sqrt{\log n}\),
\[
\|\widetilde f_n-f_0\|_1
=
O_p\{(\log n)^{1/4}r_n+n^{-D}\}.
\]
\end{lemma}

\begin{proof}
Every density in \(\mathcal F_n^G\) is a Gaussian mixture with
\(|\mu_k|\leq M_n\) and \(\sigma_k\leq\overline\sigma_G\). For
sufficiently large \(C_A\), Gaussian tail bounds give
\(\sup_{f\in\mathcal F_n^G}\int_{|x|>A_n}f(x)\,dx\leq n^{-D}\).
If \(X\sim f_0\), write \(X=U+\tau Z\), where \(U\sim Q_0\),
\(Z\sim N(0,1)\), and
\(\tau\in[\underline\tau,\overline\tau]\). The subgaussian tail of
\(U\) and a Gaussian tail bound imply
\(\int_{|x|>A_n}f_0(x)\,dx\leq n^{-D}\) after increasing \(C_A\).
On \([-A_n,A_n]\), Cauchy--Schwarz therefore yields
\[
\|\widetilde f_n-f_0\|_1
\leq
(2A_n)^{1/2}\|\widetilde f_n-f_0\|_2+2n^{-D}.
\]
Since \(A_n\asymp\sqrt{\log n}\), the result follows.
\end{proof}

\begin{lemma}[Posterior-kernel parameter regularity]
\label{lem:supp-gaussian-posterior-lipschitz}
For fixed \(K\), parameterize the mixture weights by logits and write
\(\eta=(a,\mu,\sigma)\). Under the scale bounds of \(\mathcal F_n^G\),
there exists a constant \(C\), depending only on
\(\underline\sigma_G\), \(\overline\sigma_G\), and
\(\sigma_\varepsilon\), and independent of \(K,n,y\), such that
\(\|\partial_{a_j}q_\eta(\cdot,y)\|_1\leq2\),
\(\|\partial_{\mu_j}q_\eta(\cdot,y)\|_1\leq
C\{1+|y|+M_n\}\), and
\(\|\partial_{\sigma_j}q_\eta(\cdot,y)\|_1\leq
C\{1+(|y|+M_n)^2\}\). Consequently,
\[
\|q_\eta(\cdot,y)-q_{\widetilde\eta}(\cdot,y)\|_1
\leq
CK\{1+(|y|+M_n)^2\}\|\eta-\widetilde\eta\|_\infty.
\]
\end{lemma}

\begin{proof}
For component \(k\), put
\(v_k=\sigma_k^2+\sigma_\varepsilon^2\),
\(\lambda_k=\sigma_k^2v_k^{-1}\),
\(w_k^2=\sigma_k^2\sigma_\varepsilon^2v_k^{-1}\), and
\(m_k(y)=\mu_k+\lambda_k(y-\mu_k)\). Gaussian conjugacy gives
\(q_\eta(\cdot,y)=\sum_{k=1}^K r_k(y)p_k(\cdot\mid y)\), where
\(p_k(\cdot\mid y)=\phiN\{\cdot;m_k(y),w_k^2\}\).
For a logit coordinate,
\(\partial_{a_j}q_\eta=r_j(p_j-q_\eta)\), which gives the first bound.
For \(z_j\in\{\mu_j,\sigma_j\}\), differentiating the responsibilities
gives
\(\partial_{z_j}q_\eta=r_js_{z,j}(p_j-q_\eta)
+r_j\partial_{z_j}p_j\), with
\(s_{z,j}=\partial_{z_j}\log\phiN(y;\mu_j,v_j)\). The fixed scale
bounds imply
\(|s_{\mu,j}(y)|\leq C(|y|+M_n)\) and
\(|s_{\sigma,j}(y)|\leq C\{1+(|y|+M_n)^2\}\), while the conditional
standard deviations \(w_j\) are bounded above and away from zero.
Standard Gaussian score bounds therefore give the stated coordinate
inequalities. Integrating the gradient along the line segment joining
\(\eta\) and \(\widetilde\eta\), and summing over the \(O(K)\)
coordinates, proves the final Lipschitz bound.
\end{proof}

\begin{proposition}[Posterior empirical-process rate]
\label{prop:supp-gaussian-smoothed-posterior-complexity}
Suppose that \(f_0\in\mathcal C_G\), the measurement error is Gaussian,
and the candidate class is \(\mathcal F_n^G\). If
\(M_n=C_M\sqrt{\log n}\), \(\pi_{\min,n}^{-1}\) grows at most
polynomially, and \(K_n\log n=o(n)\), then
\[
\sup_{f\in\mathcal F_n^G}
\|(\Pn-\Pzero)q_f\|_1
=
O_p\{(K_n\log n/n)^{1/2}\}.
\]
Under the baseline calibration
\(K_n=\max\{2,\lfloor n^{1/5}(\log n)^{-1/5}\rfloor\}\), this becomes
\(O_p\{n^{-2/5}(\log n)^{2/5}\}\).
\end{proposition}

\begin{proof}
For fixed \(f\), set \(Z_f=\|(\Pn-\Pzero)q_f\|_1\). Replacing one
observation changes \(Z_f\) by at most \(2/n\), so McDiarmid's
inequality gives \(\Pr\{Z_f-\E Z_f>t\}\leq\exp(-cnt^2)\).
Let \(A_n=M_n+C_A\sqrt{\log n}\). The subgaussian tail of the observed
variable and the fixed conditional Gaussian scale bounds imply, for any
fixed \(D>0\) and sufficiently large \(C_A\),
\(\sup_{f\in\mathcal F_n^G}\E_0\int_{|x|>A_n}q_f(x\mid Y)\,dx
\leq Cn^{-D}\). On \([-A_n,A_n]\), Cauchy--Schwarz, Jensen's inequality,
and Fubini's theorem, together with the uniform conditional \(L^2\) bound,
give \(\sup_f\E Z_f=O\{n^{-1/2}(\log n)^{1/4}\}\).

For each \(K\leq K_n\), cover the ambient logit, location, and scale box
by a sup-norm grid of mesh \(\delta_n=n^{-B}\), with \(B\) sufficiently
large. Polynomial growth of \(\pi_{\min,n}^{-1}\) gives
\(\log N_n=O(K_n\log n)\). McDiarmid's inequality and a union bound then
show that the maximum of \(Z_f\) over the grid is
\(O_p\{(K_n\log n/n)^{1/2}+n^{-1/2}(\log n)^{1/4}\}\).
Lemma~\ref{lem:supp-gaussian-posterior-lipschitz} makes the grid remainder
negligible because
\((\Pn+\Pzero)\{1+(|Y|+M_n)^2\}=O_p(\log n)\). The first term dominates,
which proves the result; without a separability convention, the same
conclusion holds in outer probability.
\end{proof}

\subsection{Truncated lower-density bounds for posterior stability}
\label{app:posterior-truncation-material}
The bounds below provide a generic sufficient route for posterior kernels outside the fixed-scale Gaussian branch.

\subsubsection*{Sieve-uniform lower-density calibration}

The quantities \(\mathcal Y_n\) and \(m_n^{\mathrm{sieve}}\) are introduced for the generic truncated posterior-kernel
verification in Proposition~\ref{prop:posterior-stability-gaussian}. The following lemmas provide the
required sieve-uniform lower density on \(\mathcal Y_n\).

\begin{lemma}[Explicit lower bound for the convolved Gaussian sieve]
\label{lem:explicit-mn}
Let \(\mathcal Y_n=[-B_n,B_n]\). Let \(\mathcal F_n\) be the regularized
Gaussian-mixture sieve defined in 7--6.
Assume that \(f_\varepsilon\) is strictly positive on compact intervals. Then,
for every \(h_n>0\),
\[
\inf_{f\in\mathcal F_n}\inf_{y\in\mathcal Y_n} g_f(y)
\ge
\eta_{\varepsilon,n}(h_n)
\left\{
2\Phi\left(\frac{h_n}{\sigma_{\max,n}}\right)-1
\right\},
\]
where
\[
\eta_{\varepsilon,n}(h_n)
=
\inf_{|u|\le B_n+M_n+h_n} f_\varepsilon(u).
\]
In particular, choosing \(h_n=\sigma_{\max,n}\) gives
\[
\inf_{f\in\mathcal F_n}\inf_{y\in\mathcal Y_n} g_f(y)
\ge
\{2\Phi(1)-1\}
\inf_{|u|\le B_n+M_n+\sigma_{\max,n}} f_\varepsilon(u).
\]
The bound is uniform over \(K\le K_n\), over the mixture weights, and over
\(\sigma_{\min,n}\).
\end{lemma}
\begin{proof}
For a single Gaussian component,
\[
    g_k(y;\mu_k,\sigma_k^2)
    =
    \int \phi(x;\mu_k,\sigma_k^2)f_\varepsilon(y-x)\,dx .
\]
Restrict the integral to the interval \(|x-\mu_k|\le h_n\). If
\(y\in[-B_n,B_n]\), \(|\mu_k|\le M_n\), and \(|x-\mu_k|\le h_n\), then
\(
    |y-x|\le B_n+M_n+h_n .
\)
Therefore
\(
    f_\varepsilon(y-x)
    \ge
    \eta_{\varepsilon,n}(h_n)
\)
on this restricted interval. Hence
\[
    g_k(y;\mu_k,\sigma_k^2)
    \ge
    \eta_{\varepsilon,n}(h_n)
    \Pr\{|Z-\mu_k|\le h_n\},
    \qquad
    Z\sim N(\mu_k,\sigma_k^2).
\]
Since \(\sigma_k\le\sigma_{\max,n}\),
\[
    \Pr\{|Z-\mu_k|\le h_n\}
    =
    2\Phi\left(\frac{h_n}{\sigma_k}\right)-1
    \ge
    2\Phi\left(\frac{h_n}{\sigma_{\max,n}}\right)-1 .
\]
The same lower bound holds for every component. Averaging with the mixture
weights, which sum to one,  yields the stated lower bound for \(g_f\).
\end{proof}

\begin{corollary}[Laplace measurement error]
\label{cor:explicit-mn-laplace}
Suppose that \(\varepsilon\sim\mathrm{Laplace}(0,b)\), 
and let \(\mathcal Y_n=[-B_n,B_n]\). Under the conditions of
Lemma~\ref{lem:explicit-mn}, choosing \(h_n=\sigma_{\max,n}\) gives
\[
\inf_{f\in\mathcal F_n}\inf_{y\in\mathcal Y_n} g_f(y)
\ge
\{2\Phi(1)-1\}\frac{1}{2b}
\exp\left\{-\frac{B_n+M_n+\sigma_{\max,n}}{b}\right\}.
\]
Consequently, if \(\sigma_{\max,n}\) is bounded above, then
\[
(m_n^{\mathrm{sieve}})^{-1}
\le
C\exp\{C(B_n+M_n)\},
\]
so the exponent is linear, rather than quadratic, in \(B_n+M_n\).
\end{corollary}
\begin{proof}
For Laplace error, \(f_\varepsilon(u)=(2b)^{-1}\exp(-|u|/b)\) is decreasing in \(|u|\). Therefore
\[
    \eta_{\varepsilon,n}(h_n)
    =\inf_{|u|\le B_n+M_n+h_n}f_\varepsilon(u)
    =\frac{1}{2b}\exp\left\{-\frac{B_n+M_n+h_n}{b}\right\}.
\]
Substituting \(h_n=\sigma_{\max,n}\) in Lemma~\ref{lem:explicit-mn} gives the obtained bound. If \(\sigma_{\max,n}\) is bounded, the stated exponential order follows immediately.
\end{proof}

The Gaussian-error case is treated separately,
since the convolution with a Gaussian component admits a closed-form
calculation.
\begin{corollary}[Gaussian measurement error]
\label{cor:explicit-mn-gaussian}
Suppose that \(\varepsilon\sim N(0,\sigma_\varepsilon^2)\), with
\(\sigma_\varepsilon>0\), and let \(\mathcal Y_n=[-B_n,B_n]\). Then
\[
\inf_{f\in\mathcal F_n}\inf_{y\in\mathcal Y_n} g_f(y)
\ge
\frac{1}{\sqrt{2\pi}}
\min_{v\in\{v_{-,n},v_{+,n}\}}
v^{-1/2}
\exp\left\{-\frac{(B_n+M_n)^2}{2v}\right\},
\]
where
\(
v_{-,n}
=
\sigma_\varepsilon^2+\sigma_{\min,n}^2,
\) and
\(
v_{+,n}
=
\sigma_\varepsilon^2+\sigma_{\max,n}^2.
\)
Consequently, if \(\sigma_{\max,n}\) is bounded above, then, for
\(S_n=B_n+M_n\), we have
\(
(m_n^{\mathrm{sieve}})^{-1/2}
\le
C\exp\{C S_n^2\}
\)
for a finite constant \(C>0\). In particular, if \(S_n\le C_S\sqrt{\log n}\),
then
\[
(m_n^{\mathrm{sieve}})^{-1/2}
\le
C n^{C C_S^2},
\]
while if \(S_n=o(\sqrt{\log n})\), then
\[
(m_n^{\mathrm{sieve}})^{-1/2}
=
n^{o(1)}.
\]
\end{corollary}
\begin{proof}
Under Gaussian measurement error, a latent Gaussian component with variance
\(\sigma^2\) is observed as a Gaussian density with variance
\(v=\sigma_\varepsilon^2+\sigma^2\):
\(
    g_k(y;\mu,\sigma)
    =
    \phi(y;\mu,v).
\)
For \(y\in[-B_n,B_n]\) and \(|\mu|\le M_n\),
\(|y-\mu|\le B_n+M_n\). Also
\(v\in[v_{-,n},v_{+,n}]\).
Thus
\[
    g_k(y;\mu,\sigma)
    \ge
    \frac{1}{\sqrt{2\pi}}
    \min_{v\in[v_{-,n},v_{+,n}]}
    v^{-1/2}
    \exp\left\{-\frac{(B_n+M_n)^2}{2v}\right\}.
\]
The function
\(v\mapsto v^{-1/2}\exp\{-(B_n+M_n)^2/(2v)\}\) has its minimum over the compact
interval \([v_{-,n},v_{+,n}]\) at one of the two endpoints. This gives the
 lower bound after averaging over components.
If \(\sigma_{\max,n}\) is bounded above, then \(v_{+,n}\) is bounded above. Taking reciprocal square roots of the preceding lower bound gives \(C\exp\{C(B_n+M_n)^2\}\). If \(S_n\le C_S\sqrt{\log n}\), then this factor is bounded by
\(C n^{C C_S^2}\). If \(S_n=o(\sqrt{\log n})\), it is \(n^{o(1)}\).
\end{proof}

The quadratic loss is not an artifact of the proof. For the one-component
candidate \(f(x)=\phi(x;M_n,\sigma_{\min,n}^2)\), the observed Gaussian
convolution at \(y=-B_n\) is proportional to
\(\exp\{-(B_n+M_n)^2/[2(\sigma_{\min,n}^2+\sigma_\varepsilon^2)]\}\).
Hence any lower bound uniform over \(f\in\mathcal F_n\) and
\(y\in[-B_n,B_n]\) must allow this worst-case quadratic exponent.

\subsection{Fourier and latent-tail bounds for inverse transfer}
\label{app:fourier-inverse-material}

Proposition~\ref{main-prop:latent-rate-transfer} transfers observed-density
accuracy to latent-density accuracy through the inverse factor
\(\kappa_\varepsilon(T)\), Fourier truncation, and latent-tail control.
We first record the specialization to exponential inverse growth and then
give the additional bounds used for the Gaussian-mixture sieve.
\begin{corollary}[Conditional exponential inverse-growth transfer]
\label{cor:supp-generic-supersmooth-transfer}
Suppose that the conditions of
Proposition~\ref{main-prop:latent-rate-transfer} hold with \(d_n\to0\),
and that, for some
\(\alpha,c_\varepsilon,C_\varepsilon>0\) and \(q_\varepsilon\ge0\),
\(\kappa_\varepsilon(T)\le
C_\varepsilon(1+T)^{q_\varepsilon}\exp(c_\varepsilon T^\alpha)\)
for all sufficiently large \(T\).

Fix \(\rho\in(0,1)\) and set
\(T_n=\{\rho c_\varepsilon^{-1}\log(d_n^{-1})\}^{1/\alpha}\).
Then
\[
\begin{aligned}
\|\widehat f-f_0\|_2
&=
O_p\!\left[
d_n^{1-\rho}\{\log(d_n^{-1})\}^{q_\varepsilon/\alpha}
+
(R_n+R_0)\{\log(d_n^{-1})\}^{-s/\alpha}
\right],\\
\|\widehat f-f_0\|_1
&=
O_p\!\left[
A_n^{1/2}
\left\{
d_n^{1-\rho}\{\log(d_n^{-1})\}^{q_\varepsilon/\alpha}
+
(R_n+R_0)\{\log(d_n^{-1})\}^{-s/\alpha}
\right\}
+\eta_n
\right].
\end{aligned}
\]
\end{corollary}

\begin{proof}
For the stated \(T_n\),
\(\exp(c_\varepsilon T_n^\alpha)d_n=d_n^{1-\rho}\),
\((1+T_n)^{q_\varepsilon}
=O[\{\log(d_n^{-1})\}^{q_\varepsilon/\alpha}]\), and
\(T_n^{-s}=O[\{\log(d_n^{-1})\}^{-s/\alpha}]\).
Hence
\(\kappa_\varepsilon(T_n)d_n
=O[d_n^{1-\rho}\{\log(d_n^{-1})\}^{q_\varepsilon/\alpha}]\).
Substitution into the \(L^2\) and \(L^1\) conclusions of
Proposition~\ref{main-prop:latent-rate-transfer} gives the result.
\end{proof}

\begin{lemma}[Fourier and tail bounds of the Gaussian-mixture sieve]
\label{lem:gaussian-sieve-budgets}
Let \(f\in\mathcal F_{K,n}^{\mathrm{reg}}\). Then, for every \(s>0\), there
exists a finite constant \(C_s>0\), depending only on \(s\), such that
\[
J_s(f)
\le
R_n,
\qquad
R_n
=
C_s\left\{1+\sigma_{\min,n}^{-(s+1/2)}\right\}.
\]
Moreover, for every \(A>M_n\),
\[
\sup_{f\in\mathcal F_n}
\int_{|x|>A}f(x)\,dx
\le
2
\left\{
1-\Phi\left(
\frac{A-M_n}{\sigma_{\max,n}}
\right)
\right\}.
\]
\end{lemma}

\begin{proof}
For a Gaussian mixture,
$\varphi_f(t)
=
\sum_{k=1}^K
\pi_k\exp(it\mu_k)\exp(-\sigma_k^2t^2/2)$.

Since \(\sum_k\pi_k=1\) and \(\sigma_k\ge\sigma_{\min,n}\),
\[
|\varphi_f(t)|
\le
\sum_{k=1}^K\pi_k\exp(-\sigma_k^2t^2/2)
\le
\exp(-\sigma_{\min,n}^2t^2/2).
\]
Therefore
\[
    J_s(f)^2
    \le
    \int_{\R}(1+t^2)^s\exp(-\sigma_{\min,n}^2t^2)\,dt
    \le
    C_s\{1+\sigma_{\min,n}^{-(2s+1)}\}.
\]
The last inequality follows by splitting the integral over \(|t|\le1\) and \(|t|>1\), and by the change of variable \(u=\sigma_{\min,n}t\). For the tail bound, if \(|\mu_k|\le M_n\), \(\sigma_k\le\sigma_{\max,n}\), and \(A>M_n\), then
\[
    \Pr\{|Z_k|>A\}
    \le
    2\left\{1-\Phi\left(\frac{A-M_n}{\sigma_{\max,n}}\right)\right\},
    \qquad Z_k\sim N(\mu_k,\sigma_k^2).
\]
Averaging this inequality with weights \(\pi_k\) gives the bound.
\end{proof}
\begin{lemma}[Bias-separated inverse bound for the Gaussian-mixture sieve]
\label{lem:supp-bias-separated-inverse}
Let \(f_n^\circ\in\mathcal F_n\) be the approximating density in
Assumption~\ref{main-ass:primitive-approximation}. Suppose that
\(\varphi_\varepsilon(t)\neq0\) for every \(t\), and that
\(\widehat f\) and \(f_n^\circ\) are Gaussian mixtures whose component
standard deviations are bounded below by \(\sigma_{\min,n}\). Then, for
every \(T>0\),
\[
\|\widehat f-f_0\|_2
\le
\|f_n^\circ-f_0\|_2
+
C\kappa_\varepsilon(T)\|g_{\widehat f}-g_{f_n^\circ}\|_2
+
C\sigma_{\min,n}^{-1}T^{-1/2}
\exp(-\sigma_{\min,n}^2T^2/2).
\]
The constant \(C\) is independent of \(n\), \(T\), \(\widehat f\), and
\(f_n^\circ\), and may depend only on the Fourier-transform convention.
\end{lemma}

\begin{proof}
By the lower scale restriction, both \(\widehat f\) and \(f_n^\circ\)
can be written as
\(f=\phi(\,\cdot\,;0,\sigma_{\min,n}^2)*P_f\), where \(P_f\) is a
probability measure. Indeed, if
\(f=\sum_k\pi_k\phi(\,\cdot\,;\mu_k,\sigma_k^2)\), one may take
\(P_f=\sum_k\pi_k
\mathcal N(\mu_k,\sigma_k^2-\sigma_{\min,n}^2)\).
Consequently,
\(|\varphi_{\widehat f-f_n^\circ}(t)|
\le2\exp(-\sigma_{\min,n}^2t^2/2)\).

Since
\(\varphi_{g_{\widehat f}-g_{f_n^\circ}}
=\varphi_\varepsilon\varphi_{\widehat f-f_n^\circ}\),
splitting the Fourier domain at \(T\) and applying Plancherel's identity
gives
\[
\|\widehat f-f_n^\circ\|_2^2
\le
C\kappa_\varepsilon(T)^2
\|g_{\widehat f}-g_{f_n^\circ}\|_2^2
+
C\int_{|t|>T}
\exp(-\sigma_{\min,n}^2t^2)\,dt.
\]
Using
\(\int_T^\infty e^{-a^2t^2}\,dt
\le (2a^2T)^{-1}e^{-a^2T^2}\)
and taking square roots gives
\(\|\widehat f-f_n^\circ\|_2
\le C\kappa_\varepsilon(T)\|g_{\widehat f}-g_{f_n^\circ}\|_2
+C\sigma_{\min,n}^{-1}T^{-1/2}
\exp(-\sigma_{\min,n}^2T^2/2)\).
The result follows from
\(\|\widehat f-f_0\|_2
\le\|\widehat f-f_n^\circ\|_2+\|f_n^\circ-f_0\|_2\).
\end{proof}

\subsection{Localized ordinary-smooth rate machinery}
\label{app:localized-ordinary-rate-material}

This subsection collects the observed-density, inverse, and latent-tail bounds
used in the ordinary-smooth theorem of the main paper. Throughout,
\(\mathcal F_h^{\mathrm{reg}}\) denotes the local sieve defined in
Equation~\ref{main-eq:local-regularized-sieve}.

\begin{proposition}[Localized observed-density rate]
\label{prop:supp-local-observed-rate}
Under Assumptions~\ref{main-ass:local-ap}--\ref{main-ass:local-ll},
\[
\dH(g_{\widehat f_h},g_0)
=O_p\left\{h^{s+\beta}L_h^{a_A}
+\left(\frac{K_hL_h^a}{n}\right)^{1/2}\right\},
\]
for a finite \(a\). Since \(f_\varepsilon\) is bounded, the same order holds
for \(\|g_{\widehat f_h}-g_0\|_2\).
\end{proposition}

\begin{proof}
Assumption~\ref{main-ass:local-ap} supplies a Kullback--Leibler approximant
with second log-moment of order \(h^{2(s+\beta)}L_h^{2a_A}\). The Hellinger
bracketing-integral and penalty conditions in
Assumption~\ref{main-ass:local-ll} are precisely the hypotheses of the
likelihood-ratio inequality for a sieve centered at a Kullback--Leibler
approximant. Applying that inequality to the observed sieve \(\mathcal G_h\)
gives
\[
\dH(g_{\widehat f_h},g_0)
=O_p\left\{h^{s+\beta}L_h^{a_A}
+\left(\frac{K_hL_h^a}{n}\right)^{1/2}\right\}.
\]
The BIC-type penalty is absorbed because its normalized value along the
approximating sequence is of the same or smaller squared order. This is the
standard sieve-MLE argument of \citet{wongshen1995}, with the finite-mixture
entropy calculation used in \citet{ghosalvdv2001}. Finally, every observed
density satisfies \(g_f\le\|f_\varepsilon\|_\infty\), so
\[
\|g_{\widehat f_h}-g_0\|_2^2
\le4\|f_\varepsilon\|_\infty
\dH^2(g_{\widehat f_h},g_0),
\]
which proves the \(L^2\) conclusion.
\end{proof}

\begin{lemma}[Direct inverse bound for the Gaussian sieve]
\label{lem:supp-local-direct-inverse}
Suppose Assumption~\ref{main-ass:local-os} holds and
\(f_0\in H^s(\mathbb R)\). For every
\(f\in\mathcal F_h^{\mathrm{reg}}\) and \(T\ge1\),
\[
\|f-f_0\|_2
\le CT^\beta\|g_f-g_0\|_2+CT^{-s}
+C h^{-1}T^{-1/2}\exp(-h^2T^2/2).
\]
\end{lemma}

\begin{proof}
Let \(u=f-f_0\). By Plancherel's identity and the convolution theorem,
\[
\int_{|t|\le T}|\varphi_u(t)|^2\,dt
\le C T^{2\beta}\|g_f-g_0\|_2^2.
\]
The Sobolev condition gives
\[
\left\{\int_{|t|>T}|\varphi_{f_0}(t)|^2\,dt\right\}^{1/2}
\le C T^{-s}.
\]
If \(f=\sum_j\pi_j\phiN(\cdot;\mu_j,\sigma_j^2)\) and
\(\sigma_j\ge h\), then \(|\varphi_f(t)|\le e^{-h^2t^2/2}\). Therefore
\[
\left\{\int_{|t|>T}|\varphi_f(t)|^2\,dt\right\}^{1/2}
\le C h^{-1}T^{-1/2}e^{-h^2T^2/2}.
\]
Combining the three frequency ranges proves the result.
\end{proof}

\begin{lemma}[Tail conversion for the local Gaussian sieve]
\label{lem:supp-local-l2-l1}
Suppose \(f_0\) has a subgaussian tail, \(M_h\le C_ML_h\), and the component
scales in \eqref{main-eq:local-regularized-sieve} are bounded above by
\(\bar\sigma\). If \(\|f-f_0\|_2=O_p(h^sL_h^c)\) for
\(f\in\mathcal F_h^{\mathrm{reg}}\), then
\(\|f-f_0\|_1=O_p(h^sL_h^{c+1/2})\).
\end{lemma}

\begin{proof}
Let \(A_h=M_h+d\sqrt{L_h}\), where \(d\) is fixed and sufficiently large.
On \([-A_h,A_h]\), Cauchy--Schwarz gives
\[
\int_{|x|\le A_h}|f(x)-f_0(x)|\,dx
\le(2A_h)^{1/2}\|f-f_0\|_2.
\]
Since \(A_h=O(L_h)\), this contributes at most one additional factor
\(L_h^{1/2}\). For every mixture in \(\mathcal F_h^{\mathrm{reg}}\),
\[
\int_{|x|>A_h}f(x)\,dx
\le2\left\{1-\Phi\left(d\sqrt{L_h}/\bar\sigma\right)\right\}
\le C h^{d^2/(2\bar\sigma^2)}.
\]
The subgaussian tail of \(f_0\) is smaller than every fixed power of \(h\)
at \(A_h\asymp L_h\). Choose \(d^2/(2\bar\sigma^2)>s\). The two tail
terms are then \(o(h^s)\), proving the result.
\end{proof}

\subsection{Verification under Laplace measurement error}
\label{app:laplace-ordinary-verification}

This subsection verifies the approximation, localized likelihood, and posterior
empirical-process conditions used in the Laplace specialization of the main
paper. The class \(\mathcal C_{\mathrm{sg}}\) is defined in
Equation~\ref{main-eq:concrete-class}, and the local sieve
\(\mathcal F_h^{\mathrm{reg}}\) is defined in
Equation~\ref{main-eq:local-regularized-sieve}.

\begin{proposition}[Verification of the approximation condition]
\label{prop:supp-concrete-laplace-ap}
Fix \(s>0\) and let \(f_0\in\mathcal C_{\mathrm{sg}}\). Suppose, for all
sufficiently small \(h\), that \(h\le\tau_-\), \(\bar\sigma\ge\tau_+\),
\(M_h\ge D_s\sqrt{L_h}\), \(K_h\ge C_sL_h\),
\(K_h\pi_{\min,h}\le1/2\), and
\(K_h\pi_{\min,h}\le C_\pi h^{2(s+2)}\). Then there exists
\(f_h^\circ\in\mathcal F_h^{\mathrm{reg}}\), with \(O(L_h)\) active
components, such that
\begin{equation}
\KL(g_0,g_h^\circ)+\V(g_0,g_h^\circ)
\le Ch^{2(s+2)}L_h.
\label{eq:concrete-ap}
\end{equation}
Thus Assumption~\ref{main-ass:local-ap} holds for
\eqref{main-eq:concrete-class} under Laplace error.
\end{proposition}

\begin{proof}
Write \(f_0=\phi_\tau*Q_0\), with
\(\tau\in[\tau_-,\tau_+]\), let \(U\sim Q_0\), and set
\(\psi_{\tau,b}=\phi_\tau*k_b\), so that
\(g_0=\psi_{\tau,b}*Q_0\). Put $a=2(s+2)$, and choose sufficiently large constants \(D_s\) and \(C_s\) such that
\[
D_s^2>\frac{a+3}{c_0},\qquad
\frac{D_s}{\tau_-}\sqrt{\frac e{C_s}}\leq\frac12,
\qquad
C_s\log 2>a+2.
\]
Then set $A_h=D_s\sqrt{L_h},$
 and $r_h=\lceil C_sL_h\rceil$, and let \(Q_h^A\) be the conditional law of \(U\) given
\(|U|\leq A_h\). The subgaussian tail bound gives
\[
\delta_h:=Q_0(|U|>A_h)=O(h^{a+3}),
\qquad
\|\psi_{\tau,b}*Q_0-\psi_{\tau,b}*Q_h^A\|_1
\leq 2\delta_h.
\]

By Carathéodory's theorem applied to the moment curve
\(u\mapsto(u,\ldots,u^{r_h-1})\), there is a probability measure
\(Q_h\), supported on at most \(r_h\) points of \([-A_h,A_h]\),
matching the first \(r_h-1\) moments of \(Q_h^A\). Taylor expansion
and moment cancellation, together with
\[
\|\psi_{\tau,b}^{(r)}\|_1
\leq \|\phi_\tau^{(r)}\|_1
\leq \tau^{-r}\sqrt{r!},
\]
yield
\[
\begin{aligned}
\|\psi_{\tau,b}*Q_h^A-\psi_{\tau,b}*Q_h\|_1
&\leq
2\frac{(A_h/\tau_-)^{r_h}}{\sqrt{r_h!}}  \\
&\leq
2\left(
\frac{D_s}{\tau_-}
\sqrt{\frac{eL_h}{r_h}}
\right)^{r_h}
=O(h^{a+2}).
\end{aligned}
\]

Remove from \(Q_h\) all atoms having weight smaller than
\(\pi_{\min,h}\) and add their total mass to any retained atom.
Since
$r_h\pi_{\min,h}
\leq K_h\pi_{\min,h}\leq 1/2$,
at least one atom is retained. The resulting probability measure
\(\widetilde Q_h\) is supported on a subset of the same atoms, and
all its nonzero weights are at least \(\pi_{\min,h}\). Moreover,
\[
|Q_h-\widetilde Q_h|(\mathbb R)
\leq 2r_h\pi_{\min,h}.
\]
Hence, by the \(L^1\)-contractivity of convolution,
\[
\|\psi_{\tau,b}*Q_h-\psi_{\tau,b}*\widetilde Q_h\|_1
\leq
|Q_h-\widetilde Q_h|(\mathbb R)
\leq
2r_h\pi_{\min,h}
\leq
2K_h\pi_{\min,h}
=
O(h^a).
\]
Define
\[
f_h^\circ=\phi_\tau*\widetilde Q_h,
\qquad
g_h^\circ=\psi_{\tau,b}*\widetilde Q_h.
\]
Its locations lie in
\([-A_h,A_h]\subset[-M_h,M_h]\), its common scale satisfies
\(h\leq\tau\leq\bar\sigma\), it has at most \(r_h\le K_h\) active components, with
\(r_h=O(L_h)\), and all weights are at
least \(\pi_{\min,h}\). For all sufficiently small $h$, the assumptions imply that
$A_h \leq M_h$, $r_h \leq K_h$, and $h \leq \tau \leq \bar{\sigma}$, therefore  \(f_h^\circ\in\mathcal F_h^{\mathrm{reg}}\).
The preceding bounds imply
\[
\|g_0-g_h^\circ\|_1\leq Ch^a,
\qquad
H^2(g_0,g_h^\circ)\leq Ch^a.
\]

It remains to control the likelihood ratio. The subgaussian
assumption implies \(Ee^{|U|/b}<\infty\). Therefore, by the triangle
inequality and uniformly in \(\tau\in[\tau_-,\tau_+]\),
\[
g_0(y)\leq C_1e^{-|y|/b},
\qquad
\inf_{|u|\leq A_h}\psi_{\tau,b}(y-u)
\geq C_2e^{-A_h/b}e^{-|y|/b}.
\]
Averaging the second inequality with respect to \(\widetilde Q_h\)
gives
\[
\frac{g_h^\circ(y)}{g_0(y)}
\geq
\lambda_h:=
\min\{1,C_3e^{-A_h/b}\},
\qquad y\in\mathbb R,
\]
and hence
\[
\log(1/\lambda_h)=O(A_h)=O(\sqrt{L_h}).
\]

Finally, the standard Hellinger--likelihood-ratio comparison
\[
\KL(p,q)+V(p,q)
\leq
C\{1+\log^2(1/\lambda)\}H^2(p,q),
\qquad q/p\geq\lambda,
\]
applied with \(p=g_0\), \(q=g_h^\circ\), and
\(\lambda=\lambda_h\), yields
\[
\KL(g_0,g_h^\circ)+V(g_0,g_h^\circ)
\leq
Ch^aL_h
=
Ch^{2(s+2)}L_h.
\]
Thus Assumption~\ref{main-ass:local-ap} holds uniformly over the class $\mathcal{C}_{\mathrm{sg}}$ defined in~\eqref{main-eq:concrete-class}, with $\beta=2$ and $a_A=1/2$.
\end{proof}

A compatible calibration is, for example, \(K_h\asymp h^{-1}L_h^{a_K}\), \(M_h=C_ML_h\), 
and \(\pi_{\min,h}=h^{2s+6}\), with \(\bar\sigma\ge\tau_+\). Indeed, \(K_h\pi_{\min,h}\asymp 
h^{2s+5}L_h^{a_K}=o\{h^{2(s+2)}\}\), so both the simplex-feasibility condition and the weight-floor 
requirement hold for sufficiently small \(h\). This example makes explicit that the displayed 
calibration depends on \(s\).

\begin{proposition}[Localized likelihood complexity under Laplace error]
\label{prop:supp-concrete-laplace-ll}
Suppose the measurement error is Laplace with scale \(b>0\),
\(M_h\le C_ML_h\), and \(h\le\sigma_j\le\bar\sigma\). For the observed
sieve \(\mathcal G_h=\{g_f:f\in\mathcal F_h^{\mathrm{reg}}\}\), there is a
constant \(C<\infty\) such that, for every \(0<u\le1/2\),
\begin{equation}
H_{[]}(u,\mathcal G_h,d_{\mathrm H})
\le
C K_h\log\left\{
\frac{C\{M_h+\log(e/u)\}}{hu}
\right\}.
\label{eq:concrete-laplace-hellinger-entropy}
\end{equation}
Let \(h=h_n\) be a polynomial resolution, in the sense that
\(n^{-\rho_+}\le h_n\le n^{-\rho_-}\) for fixed
\(0<\rho_-\le\rho_+<\infty\). If \(K_hL_h=o(n)\), then the entropy-integral
condition in Assumption~\ref{main-ass:local-ll} holds with
\(r_{n,h}\asymp\{K_hL_h/n\}^{1/2}\), hence with \(a_E=1\).

Using the approximant in Proposition~\ref{prop:supp-concrete-laplace-ap} and the iterated-log BIC penalty in \eqref{main-eq:bic}, the
penalty condition in Assumption~\ref{main-ass:local-ll} also holds for a finite
\(a_P\). In particular, both localized-likelihood requirements hold at
\(h_n=n^{-1/(2s+5)}\), up to fixed logarithmic modifications.
\end{proposition}

\begin{proof}
Write
\(p_{\mu,\sigma}=\phiN(\cdot;\mu,\sigma^2)*k_b\). Since convolution is a
positive linear operator and \(\|k_b\|_\infty=(2b)^{-1}\), differentiation
under the integral gives, uniformly over
\(|\mu|\le M_h\) and \(h\le\sigma\le\bar\sigma\),
\[
 \|\partial_\mu p_{\mu,\sigma}\|_\infty
 \le \|\partial_\mu\phiN(\cdot;\mu,\sigma^2)\|_1\|k_b\|_\infty
 \le C h^{-1},
 \qquad
 \|\partial_\sigma p_{\mu,\sigma}\|_\infty\le C h^{-1}.
\]
The second inequality uses
\(\|\partial_\sigma\phiN(\cdot;\mu,\sigma^2)\|_1
=\sigma^{-1}E|Z^2-1|\). Hence, for two labeled order-\(K\) mixtures,
\[
 \|g_{\pi,\mu,\sigma}-g_{\pi',\mu',\sigma'}\|_\infty
 \le C\left\{\|\pi-\pi'\|_1
 +h^{-1}\max_j|\mu_j-\mu'_j|
 +h^{-1}\max_j|\sigma_j-\sigma'_j|\right\}.
\]
A simplex net for the weights and Cartesian nets for the locations and scales
therefore give, for \(0<\eta<1\),
\[
 \log N\{\eta,\mathcal G_{K,h},\|\cdot\|_\infty\}
 \le CK\log\left(\frac{CM_h}{h^2\eta^3}\right),
\]
where enlarging the right-hand side covers bounded parameter ranges and the
positive-floor subclass. Taking the union over \(K\le K_h\) changes only the
constant.

The Laplace tail converts this uniform cover into Hellinger brackets without a
sieve-wide density floor. Indeed, for a constant depending only on
\(b\) and \(\bar\sigma\), every component and every mixture in the observed
sieve satisfy
\[
 g_f(y)\le R_h(y):=C\exp\{-c(|y|-M_h)_+\}.
\]
To see this, write a component as
\(E\{k_b(y-\mu-\sigma Z)\}\), use
\(e^{-|y-\mu-\sigma Z|/b}\le
 e^{-(|y|-M_h)/b}e^{\bar\sigma|Z|/b}\) outside \([-M_h,M_h]\), and use the
boundedness of \(k_b\) inside that interval.

Fix \(0<u\le1/2\), put
\(B=M_h+C\log(e/u)\), and choose
\(\eta=c_1u^2/(B+1)\), with \(c_1>0\) sufficiently small. For every center
\(g_j\) of an \(\eta\)-cover, define
\(\ell_j=(g_j-\eta)_+\) and
\(v_j=\min(g_j+\eta,R_h)\). If
\(\|g-g_j\|_\infty\le\eta\), then
\(\ell_j\le g\le v_j\). Moreover,
\[
 \int(v_j-\ell_j)
 \le 4B\eta+2\int_{|y|>B}R_h(y)\,dy
 \le u^2.
\]
Because \((\sqrt a-\sqrt b)^2\le|a-b|\) for nonnegative \(a,b\), each such
bracket has Hellinger size at most \(u\). Substitution of the chosen \(B\) and
\(\eta\) into the preceding covering bound yields~\eqref{eq:concrete-laplace-hellinger-entropy}.

It remains to verify the integral in Assumption~\ref{main-ass:local-ll}. For
\(u\in[c_0\epsilon^2,c_1\epsilon]\), monotonicity of the entropy and
\eqref{eq:concrete-laplace-hellinger-entropy} give
\[
 \int_{c_0\epsilon^2}^{c_1\epsilon}
 \{1+H_{[]}(u,\mathcal G_h,d_{\mathrm H})\}^{1/2}du
 \le C\epsilon
 \left[K_h\log\left\{
 \frac{C\{M_h+\log(e/\epsilon)\}}{h\epsilon}
 \right\}\right]^{1/2}.
\]
Under the polynomial-resolution condition, \(L_h\asymp\log n\). Also,
\(\epsilon\ge\{K_hL_h/n\}^{1/2}\ge n^{-1/2}\) for all sufficiently large
\(n\), and \(M_h=O(L_h)\). The logarithm factor on the right-hand side is therefore
\(O(L_h)\), so the integral is bounded by
\(C\epsilon(K_hL_h)^{1/2}\), which is at most
\(C\sqrt n\,\epsilon^2\) whenever
\(\epsilon\ge\{K_hL_h/n\}^{1/2}\). This proves the entropy-integral part of
Assumption~\ref{main-ass:local-ll} with \(a_E=1\).

Finally, Proposition~\ref{prop:supp-concrete-laplace-ap} supplies an approximant
with \(K_h^\circ=O(L_h)\) active components. For the iterated-log BIC penalty,
\(\operatorname{pen}_n(f_h^\circ)/n
=O\{L_h\mathsf L^{\circ3}(n)\log n/n\}\). Since
\(L_h\asymp\log n\), this is bounded by
\(CK_hL_h^{a_P}/n\) for some finite \(a_P\), for example \(a_P=3\).
At \(h_n=n^{-1/(2s+5)}\), one has \(L_{h_n}\asymp\log n\) and
\(K_{h_n}L_{h_n}/n\to0\), completing the verification.
\end{proof}

\begin{proposition}[Posterior empirical complexity under Laplace error]
\label{prop:supp-concrete-laplace-pe}
Under Laplace measurement error, \(M_h\le CL_h\),
\(h\le\sigma_j\le\bar\sigma\), and \(K_hL_h=o(n)\),
\begin{equation}
\sup_{f\in\mathcal F_h^{\mathrm{reg}}}
\|(\Pn-\Pzero)q_f\|_1
=O_p\{n^{-1/2}h^{-3/2}L_h^{C_Q}\}.
\label{eq:concrete-pe}
\end{equation}
Hence Assumption~\ref{main-ass:local-pe} holds with \(\gamma=3/2\).
\end{proposition}

\begin{proof}
All expectations below may be read as outer expectations; alternatively fix a
countable dense parameter subset, which has the same supremum by continuity.
For fixed $x$, normalize the class by $E_h(x)$. Symmetrization and the Dudley
entropy integral for the conditional Rademacher process, together with
Lemma~\ref{lem:laplace-posterior-covering}, give
\[
 E^*\sup_{f\in\mathcal F_h^{\rm reg}}
 |(\Pn-\Pzero)q_f(x\mid\cdot)|
 \le CE_h(x)\sqrt{\frac{K_h\log A_h(x)}{n}}.
\]
Now use
\[
 \sup_f\|(\Pn-\Pzero)q_f\|_1
 \le\int\sup_f|(\Pn-\Pzero)q_f(x\mid\cdot)|dx,
\]
Tonelli's theorem in outer-expectation form, and
Lemma~\ref{lem:laplace-posterior-envelope}. This yields
\[
 E^*\sup_f\|(\Pn-\Pzero)q_f\|_1
 \le C\frac{M_h+1}{h}
       \sqrt{\frac{K_hL_h}{n}}.
\]
Under $M_h=O(L_h)$ and
$K_h\le Ch^{-1}L_h^{a_K}$, Markov's inequality gives~\eqref{eq:concrete-pe}. The condition $K_hL_h=o(n)$ makes the empirical
bound asymptotically meaningful.
\end{proof}

At the resolution \(h_n=n^{-1/(2s+5)}\), the right-hand side of \eqref{eq:concrete-pe} is \(h_n^{s+1}L_{h_n}^{C_Q}\), and therefore tends to zero for every fixed \(s>0\). More generally, 
consistency through Proposition~\ref{prop:supp-concrete-laplace-pe} requires \(n^{-1/2}h^{-3/2}L_h^{C_Q}
\to0\).

\subsection{Posterior empirical stability for the Gaussian-mixture sieve}
\label{app:posterior-stability-gaussian}

This subsection verifies Assumption~\ref{main-ass:primitive-posterior-stability} for the regularized Gaussian-mixture sieve, with an explicit rate
\(\tau_n\). 

\begin{lemma}[Joint Lipschitz bound for the posterior kernel, uniform in \(x\)]
\label{lem:posterior-kernel-lipschitz}
There exists a finite constant \(L_n^Q\), depending only on
\(K_n,\sigma_{\min,n},\sigma_{\max,n},m_n^{\mathrm{sieve}},
\|f_\varepsilon\|_\infty,L_\varepsilon\), such that
for every \(\theta,\widetilde\theta\in\Theta_{K,n}^{\mathrm{reg}}\), every
\(|x|\le A_n\), and every \(y\in\mathcal Y_n\),
\[
    |q_{f_\theta}(x,y)-q_{f_{\widetilde\theta}}(x,y)|
    \le
    L_n^Q\|\theta-\widetilde\theta\|_\infty,
    \qquad
    L_n^Q
    \asymp
    \frac{K_n}{\sigma_{\min,n}^2m_n^{\mathrm{sieve}}}
    +
    \frac{K_n(\|f_\varepsilon\|_\infty+L_\varepsilon)}
    {\sigma_{\min,n}(m_n^{\mathrm{sieve}})^2}.
\]
\end{lemma}

\begin{proof}
Write $
q_{f_\theta}(x,y)
=
\frac{f_\theta(x)f_\varepsilon(y-x)}{g_\theta(y)}$.
On \(\mathcal Y_n\), differentiation gives
\[
D_\theta q_{f_\theta}(x,y)
=
D_\theta f_\theta(x)\frac{f_\varepsilon(y-x)}{g_\theta(y)}
-q_{f_\theta}(x,y)\frac{D_\theta g_\theta(y)}{g_\theta(y)}.
\]
For the direct-weight, location, and scale coordinates of a Gaussian mixture,
\[
\sup_x\|D_\theta f_\theta(x)\|_1
\le
\frac{C K_n}{\sigma_{\min,n}^2}.
\]
Moreover, writing each convolved component as
\(E\{f_\varepsilon(y-\mu_k-\sigma_k Z)\}\), the boundedness and global
Lipschitz property of \(f_\varepsilon\) imply
\[
\sup_y\|D_\theta g_\theta(y)\|_1
\le
C K_n\{\|f_\varepsilon\|_\infty+L_\varepsilon\}.
\]
The scale derivative follows from a difference quotient and
\(E|Z|<\infty\); no likelihood-entropy lemma is used. On
\(|x|\le A_n\), \(y\in\mathcal Y_n\),
\[
g_\theta(y)\ge m_n^{\mathrm{sieve}},
\qquad
q_{f_\theta}(x,y)
\le
\frac{\|f_\varepsilon\|_\infty}
{\sigma_{\min,n}\sqrt{2\pi}\,m_n^{\mathrm{sieve}}}.
\]
Consequently,
\[
\sup_{|x|\le A_n,\,y\in\mathcal Y_n}
\|D_\theta q_{f_\theta}(x,y)\|_1
\le
C\left\{
\frac{K_n}{\sigma_{\min,n}^2m_n^{\mathrm{sieve}}}
+
\frac{K_n(\|f_\varepsilon\|_\infty+L_\varepsilon)}
{\sigma_{\min,n}(m_n^{\mathrm{sieve}})^2}
\right\}
=:L_n^Q.
\]
The parameter space is convex in the direct weights, locations, and scales.
Integrating this derivative bound along the segment joining \(\theta\) and
\(\widetilde\theta\) proves the lemma.
\end{proof}

\begin{remark}
The bound
\(\sup_x |\partial_{\mu_k}f(x)|\vee\sup_x |\partial_{\sigma_k}f(x)|
\le C/\sigma_{\min,n}^2\) used in the proof carries no \(A_n\)-dependence: for
a Gaussian density the map \(u\mapsto u\phi(u;0,\sigma^2)/\sigma^2\) is
maximized at \(|u|=O(\sigma)\), not at the boundary of the observation window.
This derivative bound is uniform in \(x\), so the entropy calculation below does not require a separate covering argument in the \(x\)-direction.
\end{remark}

\begin{lemma}[Bracketing entropy of the truncated posterior-kernel class, uniform in \(x\)]
\label{lem:posterior-kernel-bracketing}
Let
\(
\mathcal Q_n(x)=\{q_f(x\mid\cdot)\mathbf 1_{\mathcal Y_n}:f\in\mathcal F_n\}.
\)
There exist \(C_E>0\) and a sequence \(\widetilde L_n^Q\ge1\), absorbing
polynomial factors in
\(K_n,M_n,\sigma_{\max,n},\pi_{\min,n}^{-1},\sigma_{\min,n}^{-1}\) and
\(L_n^Q\), such that for every \(0<\rho<1\), uniformly in \(|x|\le A_n\),
\[
    \log N_{[]}\{\rho,\mathcal Q_n(x),L^2(\Pzero)\}
    \le
    C_E V_n\log\left(\frac{\widetilde L_n^Q}{\rho}\right).
\]
\end{lemma}

\begin{proof}
For each \(K\le K_n\), cover \(\Theta_{K,n}^{\mathrm{reg}}\) by a
sup-norm grid of mesh \(\delta\). A direct coordinate count gives
\[
\log N_K(\delta)
\le
C K\log\left\{
\frac{C K M_n\sigma_{\max,n}}
{\pi_{\min,n}\sigma_{\min,n}\delta}
\right\}.
\]
By Lemma~\ref{lem:posterior-kernel-lipschitz}, two parameters in the same
grid cell produce truncated kernels whose pointwise distance on
\(\mathcal Y_n\) is at most \(L_n^Q\delta\), uniformly in
\(|x|\le A_n\). Use brackets centered at the grid kernels, clipped below at
zero, with half-width \(L_n^Q\delta\mathbf 1_{\mathcal Y_n}\). Their
\(L^2(\Pzero)\)-width is at most \(2L_n^Q\delta\). Choosing
\(\delta=\rho/(2L_n^Q)\), summing over \(K\le K_n\), and absorbing the
parameter ranges and \(L_n^Q\) into \(\widetilde L_n^Q\) proves the stated
entropy bound. The finite-dimensional classes are separable, so the same
countable grids also settle measurability; alternatively all suprema may be
read as outer suprema.
\end{proof}

\begin{proposition}[Explicit rate for posterior empirical stability]
\label{prop:posterior-stability-gaussian}
Let \(\mathcal Y_n=[-B_n,B_n]\), and let
\(m_n^{\mathrm{sieve}}>0\) be a lower bound for \(g_f\) on
\(\mathcal Y_n\), uniform over \(f\in\mathcal F_n\), as supplied by
Lemma~\ref{lem:explicit-mn} and its corollaries. Assume
\(\Pzero(Y\notin\mathcal Y_n)=o(1)\). Suppose that the tail multiplier
\(c_A\) in \(A_n=(1+c_A)M_n\) satisfies
\[
\eta_n/m_n^{\mathrm{sieve}}\longrightarrow0,
\]
where
\(\eta_n=1-\Phi(c_AM_n/\sigma_{\max,n})\). Let
\[
F_n^Q
=
\frac{\|f_\varepsilon\|_\infty}
{\sigma_{\min,n}\sqrt{2\pi}\,m_n^{\mathrm{sieve}}}.
\]
Write
$
\Gamma_n^Q=V_n\log\widetilde L_n^Q.$
Then
\[
\sup_{f\in\mathcal F_n}
\|\Pn q_f-\Pzero q_f\|_1
=O_p(\tau_n),
\]
where
\[
\tau_n
=
A_nF_n^Q\left\{
\sqrt{\frac{\Gamma_n^Q}{n}}
+\frac{\Gamma_n^Q}{n}
\right\}
+\frac{\eta_n}{m_n^{\mathrm{sieve}}}
+\Pzero(Y\notin\mathcal Y_n)
+n^{-1/2}.
\]
Thus Assumption~\ref{main-ass:primitive-posterior-stability} holds whenever
\(\tau_n\to0\).
\end{proposition}

\begin{proof}
\phantomsection\label{proof:prop-posterior-stability-gaussian}

For \(f\in\mathcal F_n\), write
$
(\Pn-\Pzero)q_f
=(\Pn-\Pzero)\{q_f\mathbf 1_{\mathcal Y_n}\}
+(\Pn-\Pzero)\{q_f\mathbf 1_{\mathcal Y_n^c}\}$.
Since \(q_f(\cdot,y)\) is a density,
\[
\sup_f
\|(\Pn-\Pzero)\{q_f\mathbf 1_{\mathcal Y_n^c}\}\|_1
\le
\Pn(\mathcal Y_n^c)+\Pzero(\mathcal Y_n^c)
=2\Pzero(\mathcal Y_n^c)+O_p(n^{-1/2}).
\]
For \(y\in\mathcal Y_n\), Lemmas~\ref{lem:explicit-mn} and~\ref{lem:gaussian-sieve-budgets} give
\[
\sup_f\int_{|x|>A_n}q_f(x\mid y)\,dx
\le
\frac{2\|f_\varepsilon\|_\infty\eta_n}
{m_n^{\mathrm{sieve}}}.
\]
Thus the latent-tail contribution from the truncated observed set is
\(O\{\eta_n/m_n^{\mathrm{sieve}}\}\).
For \(|x|\le A_n\), define the correctly truncated process
\[
Z_n(x)
=
\sup_{f\in\mathcal F_n}
\left|
(\Pn-\Pzero)
\{q_f(x\mid\cdot)\mathbf 1_{\mathcal Y_n}\}
\right|.
\]
Lemma~\ref{lem:posterior-kernel-bracketing} 
and the bracketing maximal
inequality, including its square-root and linear terms, yield uniformly in
\(|x|\le A_n\),
\[
E^*Z_n(x)
\le
C F_n^Q
\left\{
\sqrt{\frac{\Gamma_n^Q}{n}}
+\frac{\Gamma_n^Q}{n}
\right\}.
\]
Tonelli's theorem for outer expectation and Markov's inequality give
\[
\int_{-A_n}^{A_n}Z_n(x)\,dx
=
O_p\left[
A_nF_n^Q
\left\{
\sqrt{\frac{\Gamma_n^Q}{n}}
+\frac{\Gamma_n^Q}{n}
\right\}
\right].
\]
Combining the observed-tail, latent-tail, and central contributions proves
the rate \(\tau_n\).
\end{proof}

\begin{remark}
Proposition~\ref{prop:gaussian-global-likelihood} controls the full Gaussian log-likelihood class.
Corollary~\ref{cor:supp-generic-supersmooth-transfer} is an inverse statement
conditional on an observed-domain rate, a Sobolev radius, and latent-tail
control. The result in
Section~\ref{main-sec:gaussian-smoothed-end-to-end} fixes a positive Gaussian scale floor, proves a Hoelder
inverse-stability inequality on the resulting analytic class, and verifies posterior empirical complexity
through the Gaussian conditional representation. Proposition~\ref{prop:posterior-stability-gaussian}
provides a sufficient condition for other sieve regimes.
\end{remark}

\subsection{Observed-tail and fitted-denominator calibration}
\label{app:fitted-denominator-material}

The preceding sieve-uniform lower bounds are used for the first-stage
bracketing-entropy calculation. Posterior reconstruction instead involves only
the denominators along the fitted sequence, \(g_{\widehat f}(Y_i)\). The
following calculation records the corresponding fitted-sequence calibration. Let
\[
    m_0(B)
    =
    \inf_{|y|\le B} g_{f_0}(y).
\]
Along the chosen sequence \(B_n\to\infty\), suppose that \(m_0(B_n)>0\) and that
either
\[
    m_0(B_n)
    \ge
    c_0\exp(-c_1B_n^\kappa)
\]
for some \(c_0,c_1,\kappa>0\), or
\[
    m_0(B_n)
    \ge
    c_0B_n^{-\rho}
\]
for some \(c_0,\rho>0\). Assume also that the truncation radii satisfy
\[
    \Pzero(|Y|>B_n)=o(1).
\]

By Lemma~\ref{lem:supp-kl-to-l2},
\[
    \sup_{|y|\le B_n}
    |g_{\widehat f}(y)-g_{f_0}(y)|
    =
    O_p(e_n^{1/2}).
\]
Hence, if $e_n=o\{m_0(B_n)^2\},
$ then
\[
    P\left(
    \inf_{|y|\le B_n}g_{\widehat f}(y)
    \ge
    \frac12 m_0(B_n)
    \right)
    \to1 .
\]
The same conclusion holds for the EM implementation with \(e_n\) replaced by
\(e_n^{\mathrm{EM}}\).

Table~\ref{tab:tail-verification} records the corresponding lower-tail regimes
for the observed density
\(g_{f_0}=f_0*f_\varepsilon\) in the Monte Carlo designs.
\begin{table}[ht]
\centering
\caption{Lower-tail regimes for the observed densities in the simulation. The 
reported exponent describes the behavior of
\(m_0(B)=\inf_{|y|\le B}g_{f_0}(y)\) on \([-B,B]\).}
\label{tab:tail-verification}
\renewcommand{\arraystretch}{1.12}
\begin{tabular}{p{0.38\textwidth}p{0.26\textwidth}p{0.26\textwidth}}
\toprule
Latent design & Gaussian error & Laplace error \\
\midrule
Standard normal
& exponential regime, \(\kappa=2\)
& exponential regime, \(\kappa=1\) \\

Student-\(t_5\)
& polynomial regime, \(\rho=6\)
& polynomial regime, \(\rho=6\) \\

Laplace
& exponential regime, \(\kappa=1\)
& exponential regime, \(\kappa=1\) \\

Cauchy& polynomial regime, \(\rho=2\)
& polynomial regime, \(\rho=2\) \\

Nearly normal
& exponential regime, \(\kappa=2\)
& exponential regime, \(\kappa=1\) \\

Gamma
& exponential regime, \(\kappa=2\)
& exponential regime, \(\kappa=1\) \\

Comte chi-square
& exponential regime, \(\kappa=2\)
& exponential regime, \(\kappa=1\) \\

Cai chi-square
& exponential regime, \(\kappa=2\)
& exponential regime, \(\kappa=1\) \\

Beta
& exponential regime, \(\kappa=2\)
& exponential regime, \(\kappa=1\) \\

Comte Mixed gamma & exponential regime, \(\kappa=2\)
& exponential regime, \(\kappa=1\) \\

Cai Mixed gamma & exponential regime, \(\kappa=2\)
& exponential regime, \(\kappa=1\) \\

Mixed normal G
& exponential regime, \(\kappa=2\)
& exponential regime, \(\kappa=1\) \\

Mixed normal Y
& exponential regime, \(\kappa=2\)
& exponential regime, \(\kappa=1\) \\
\bottomrule
\end{tabular}
\end{table}
With \(B_n\asymp M_n\asymp(\log n)^m\), the polynomial regimes impose no
further restriction on \(m\) when \(e_n\) decays polynomially in \(n\). In the
exponential regimes, cases with \(\kappa=1\) require \(m<1\), whereas the
Gaussian-error cases with \(\kappa=2\) require \(m<1/2\).

\subsection{Two routes to posterior empirical stability}
\label{app:post-stability-calibration}
Posterior empirical stability is logically separate from the likelihood empirical-process control under Gaussian error. For a general Gaussian-mixture sieve,
Appendix~\ref{app:posterior-stability-gaussian} gives a truncated sufficient
route based on an observed window \(\mathcal Y_n\), a sieve-uniform denominator
bound \(m_n^{\mathrm{sieve}}\), latent-tail control, and the explicit rate in
Proposition~\ref{prop:posterior-stability-gaussian}. Rate preservation then
requires the additional comparison \(\tau_n=O(r_n)\), which must be checked for
the calibration under consideration.
The fixed-scale Gaussian-smoothed branch uses a different argument.
Gaussian conjugacy, the positive component-scale floor, and
Lemma~\ref{lem:supp-gaussian-posterior-lipschitz} yield the global bound in
Proposition~\ref{prop:supp-gaussian-smoothed-posterior-complexity}, without a
truncated denominator floor. That proposition is the posterior input used in
Theorem~\ref{main-thm:explicit-sieve-rates}. Neither route is a consequence of the
likelihood result alone, and the two posterior verifications should not be
combined into a single calibration claim.

\subsection{BIC order selection under measurement error}
\label{app:fixed-order-selection}

This subsection justifies BIC-type model-order selection under measurement error. Although the latent variables are unobserved, each latent mixture model induces, after convolution with the known error density, a finite-mixture model for the contaminated observations. The result is restricted to a fixed candidate range
\(
    1\leq K\leq\overline K<\infty,
\)
where \(\overline K\) does not depend on \(n\).

Let \(h_\theta\), \(\theta\in\mathcal T\), be a latent component
density and let
\(
    p_\theta
    =
    h_\theta*f_\varepsilon
\)
be the corresponding observed component density. For each \(K\), define
the observed mixture class
\[
    \mathcal G_K
    =
    \left\{
        \sum_{k=1}^K \pi_k p_{\theta_k}:
        \pi_k\geq0,\quad
        \sum_{k=1}^K\pi_k=1,\quad
        \theta_k\in\mathcal T
    \right\}.
\]
Thus, the mixture weights range over the closed simplex. For each \(K\), the global within-order likelihood maximizer is
\[
\widetilde g_K
\in
\operatorname*{arg\,max}_{g\in\mathcal G_K}
\sum_{i=1}^n\log g(Y_i).
\]

\begin{proposition}[Fixed-order consistency for the observed convolution family]
\label{prop:fixed-order-selection}
Suppose that, for some fixed
\(1\leq K_0\leq\overline K<\infty\),
\(
    g_0
    \in
    \mathcal G_{K_0}
    \setminus
    \mathcal G_{K_0-1},
\)
and that the observed component family
\(\{p_\theta:\theta\in\mathcal T\}\) satisfies
Assumptions A2--A5 of \citet{nguyennguyen2026}.
Suppose also that \(d_K\) is strictly increasing in \(K\).

Define
\(
    \widehat K_\nu
    =
    \min\operatorname*{arg\,min}_{1\leq K\leq\overline K}
    \left\{
        -2\sum_{i=1}^n\log\widetilde g_K(Y_i)
        +
        d_K\mathsf L^{\circ3}(n)\log n
    \right\}.
\)
Then
\[
    \Pr(\widehat K_\nu=K_0)
    \longrightarrow
    1.
\]
\end{proposition}

\begin{proof}
Apply Corollary~1 of \citet{nguyennguyen2026} to the observed
component family
\(\{p_\theta:\theta\in\mathcal T\}\), where
\(p_\theta=h_\theta*f_\varepsilon\), and to the associated mixture
classes \(\{\mathcal G_K:1\leq K\leq\overline K\}\).
The iid sampling condition and
\(g_0\in\mathcal G_{K_0}\setminus\mathcal G_{K_0-1}\)
give Assumption A1 of that result, while Assumptions A2--A5 are imposed
directly on the observed component family.
On the average negative-log-likelihood scale, take
\[
    \operatorname{pen}_{K,n}^{\nu}
    =
    \frac{d_K}{2n}\,
    \mathsf L^{\circ3}(n)\log n.
\]
This is the \(\nu\)-BIC penalty with \(\nu=3\) and
\(\alpha(K)=d_K/2\). Since \(d_K\) is strictly increasing,
\(\alpha(K)\) satisfies the monotonicity condition required by
Corollary~1 of \citet{nguyennguyen2026}. Hence,
\[
    \Pr(\widehat K_\nu=K_0)\longrightarrow1.
\]

Multiplication of the criterion by \(2n\) does not change its minimizer
and gives
\[
    -2\sum_{i=1}^n\log\widetilde g_K(Y_i)
    +
    d_K\mathsf L^{\circ3}(n)\log n,
\]
which is the criterion used in the statement.
\end{proof}

\begin{corollary}[Known Gaussian measurement error]
\label{cor:fixed-order-gaussian}
Suppose that the latent component family is Gaussian,
\(
    h_{\mu,\sigma}(x)
    =
    \phiN(x;\mu,\sigma^2),
\)
with
\(
    (\mu,\sigma)
    \in
    \mathcal T
    =
    [-M,M]\times
    [\underline\sigma,\overline\sigma],
    \) and \(
    0<\underline\sigma<\overline\sigma<\infty.
\)
Suppose that
\(\varepsilon\sim N(0,\sigma_\varepsilon^2)\), with
\(\sigma_\varepsilon^2\) known, and that the true latent density is a
finite Gaussian mixture of minimal order \(K_0\), with all component
parameters in \(\mathcal T\). Let
\(1\leq K\leq\overline K<\infty\), with \(\overline K\) fixed, and use
\(d_K=3K-1\).
Then the assumptions of
Proposition~\ref{prop:fixed-order-selection} hold and
\[
    \Pr(\widehat K_\nu=K_0)
    \longrightarrow
    1.
\]
\end{corollary}
\begin{proof}
The observed component family is again Gaussian
location--scale, with parameter transformation
\((\mu,\sigma)\mapsto(\mu,\sigma^2+\sigma_\varepsilon^2)\), because for every \((\mu,\sigma)\in\mathcal T\),
$N(\mu,\sigma^2)*N(0,\sigma_\varepsilon^2)=   N\!\left(\mu,\sigma^2+\sigma_\varepsilon^2\right)$.
This transformation is one-to-one on the compact set
\(\mathcal T=[-M,M]\times[\underline\sigma,\overline\sigma]\), and its
image is compact, with variances bounded away from zero and infinity.
Consequently, the positivity, continuity, log-envelope, and
\(L^1\)-Lipschitz conditions follow from the Gaussian-family
verification in \citet{nguyennguyen2026}.

It remains to verify preservation of the minimal order. If \(g_0\)
admitted an observed Gaussian-mixture representation with fewer than
\(K_0\) components, the one-to-one variance transformation and
identifiability of finite Gaussian mixtures would yield a latent
Gaussian-mixture representation of \(f_0\) with fewer than \(K_0\)
components. This contradicts the assumed minimality of \(K_0\).
Therefore,
\[
    g_0\in\mathcal G_{K_0}\setminus\mathcal G_{K_0-1}.
\]
Since \(d_K=3K-1\) is strictly increasing, the conclusion follows from
Proposition~\ref{prop:fixed-order-selection}.
\end{proof}

\begin{corollary}[Known Laplace measurement error]
\label{cor:fixed-order-laplace}
Suppose that the latent component family is Gaussian,
\(
    h_{\mu,\sigma}(x)
    =
    \phiN(x;\mu,\sigma^2),
\)
with
\(
    (\mu,\sigma)
    \in
    \mathcal T
    =
    [-M,M]\times
    [\underline\sigma,\overline\sigma],
    \) and
    \(0<\underline\sigma<\overline\sigma<\infty.
\)
Suppose that the measurement error has the known Laplace density
\(
    f_\varepsilon(u)
    =
    1/(2b)\exp\left(-{|u|}/{b}\right),
\) with \( b>0,\)
and that the true latent density is a finite Gaussian mixture of
minimal order \(K_0\), with all component parameters in
\(\mathcal T\). Let
\(1\leq K\leq\overline K<\infty\), with \(\overline K\) fixed, and use
\(d_K=3K-1\).
Then the assumptions of
Proposition~\ref{prop:fixed-order-selection} hold and
\[
    \Pr(\widehat K_\nu=K_0)
    \longrightarrow
    1.
\]
\end{corollary}

\begin{proof}
Write \(\theta=(\mu,\sigma)\in\mathcal T\), with
\(\mathcal T=[-M,M]\times[\underline\sigma,\overline\sigma]\), and let
\(\phi_\theta\) denote the \(N(\mu,\sigma^2)\) density. The observed
component density is
\(p_\theta(y)=\int\phi_\theta(x)f_\varepsilon(y-x)\,dx\).

We verify Assumptions A1--A5 of \citet{nguyennguyen2026}.

\emph{A1.}
The characteristic function of the Laplace error is $\varphi_\varepsilon(t)=
    {1+b^2t^2}^{-1}$, $t\in\mathbb R$,
and is everywhere nonzero. Hence convolution with
\(f_\varepsilon\) is injective. In particular, if
\(f_0*f_\varepsilon=f_L*f_\varepsilon\) for some Gaussian mixture
\(f_L\), equality of characteristic functions implies \(f_0=f_L\).
Thus an observed representation with fewer than \(K_0\) components
would contradict the minimality of the latent order, and therefore
\(g_0\in\mathcal G_{K_0}\setminus\mathcal G_{K_0-1}\).

\emph{A2--A3.}
The parameter space \(\mathcal T\) is compact. Since both
\(\phi_\theta\) and \(f_\varepsilon\) are strictly positive,
\(p_\theta(y)>0\) for every \(y\) and \(\theta\). Measurability in \(y\)
is immediate. Continuity in \(\theta\) follows from continuity of the
Gaussian density and dominated convergence, using the compactness of
\(\mathcal T\) and the lower bound
\(\sigma\geq\underline\sigma>0\).

\emph{A4.}
If \(X_\theta\sim N(\mu,\sigma^2)\), then
\[
    p_\theta(y)
    \geq
    \frac{e^{-|y|/b}}{2b}
    \E_\theta\!\left(e^{-|X_\theta|/b}\right).
\]
The expectation on the right is continuous and strictly positive on
the compact parameter set. Hence its infimum, denoted by \(c_b\), is
positive. Since also \(p_\theta(y)\leq(2b)^{-1}\), there are finite
constants \(C_1,C_2\) such that
\[
    \sup_{\theta\in\mathcal T}
    |\log p_\theta(y)|
    \leq
    C_1+C_2|y|.
\]
The true observed variable is the sum of a finite Gaussian-mixture
variable and a Laplace variable, so \(\E_0|Y|<\infty\). Above
bound therefore provides the required integrable log-envelope.

\emph{A5.}
The Gaussian derivatives satisfy
\(\partial_\mu\phi_\theta(x)
=(x-\mu)\phi_\theta(x)/\sigma^2\) and
\(\partial_\sigma\phi_\theta(x)
=\{(x-\mu)^2/\sigma^3-1/\sigma\}\phi_\theta(x)\).
Because \(\mu\) and \(\sigma\) range over a compact set with
\(\sigma\geq\underline\sigma>0\), the functions
\[
    H_0(x)=\sup_{\theta\in\mathcal T}\phi_\theta(x),\qquad
    H_\mu(x)=\sup_{\theta\in\mathcal T}
    |\partial_\mu\phi_\theta(x)|,\qquad
    H_\sigma(x)=\sup_{\theta\in\mathcal T}
    |\partial_\sigma\phi_\theta(x)|
\]
are integrable on \(\mathbb R\).

Differentiation under the integral gives
\(\partial_\mu p_\theta=(\partial_\mu\phi_\theta)*f_\varepsilon\) and
\(\partial_\sigma p_\theta=(\partial_\sigma\phi_\theta)*f_\varepsilon\).
Consequently, with
\(G_2=(H_0+H_\mu+H_\sigma)*f_\varepsilon\), Young's inequality gives
\(G_2\in L^1(\mathbb R)\), and
\[
    \sup_{\theta\in\mathcal T}
    \left\{
        p_\theta(y)
        +
        |\partial_\mu p_\theta(y)|
        +
        |\partial_\sigma p_\theta(y)|
    \right\}
    \leq
    G_2(y).
\]
Since \(\mathcal T\) is convex, the mean-value theorem yields
\[
    |p_{\theta_1}(y)-p_{\theta_2}(y)|
    \leq
    G_2(y)\|\theta_1-\theta_2\|_1,
    \qquad
    \theta_1,\theta_2\in\mathcal T.
\]
This verifies the required \(L^1\)-Lipschitz condition.

Thus, A1--A5 hold for the normal--Laplace observed component family.
Since \(d_K=3K-1\) is strictly increasing, the conclusion follows from
Proposition~\ref{prop:fixed-order-selection}.
\end{proof}

\subsection{Laplace posterior-kernel bounds}
For $\mu\in\mathbb R$ and $0<\sigma\le\bar\sigma$, define
\[
 b_{\mu,\sigma}(y)
 =\int\phiN_\sigma(x-\mu)k_b(y-x)dx.
\]

\begin{lemma}[Laplace component comparison and score bounds]
\label{lem:laplace-component-comparison}
Uniformly in $x,y,\mu\in\mathbb R$ and
$0<\sigma\le\bar\sigma$,
\[
 C^{-1}k_b(y-\mu)\le b_{\mu,\sigma}(y)
 \le Ck_b(y-\mu),
\]
\[
 \frac{k_b(y-x)}{b_{\mu,\sigma}(y)}
 \le C e^{|x-\mu|/b},
\]
and
\[
 \frac{|\partial_\mu b_{\mu,\sigma}(y)|
       +|\partial_\sigma b_{\mu,\sigma}(y)|}
      {b_{\mu,\sigma}(y)}
 \le\frac C\sigma.
\]
If $|\mu_j|,|\mu_l|\le M$, then
\[
 \sup_y\frac{b_{\mu_j,\sigma_j}(y)}
               {b_{\mu_l,\sigma_l}(y)}
 \le C e^{2M/b}.
\]
\end{lemma}

\begin{proof}
For all $a,u\in\mathbb R$,
\[
 e^{-|u|/b}k_b(a)\le k_b(a-u)\le e^{|u|/b}k_b(a).
\]
Integrate with $u=\sigma Z$, $Z\sim N(0,1)$, and use
$\sigma\le\bar\sigma$ to obtain the first comparison. The second follows by
applying the same ratio inequality to $a=y-\mu$ and
$u=x-\mu$, followed by the component lower bound.

For the derivatives, move differentiation to the Gaussian density:
\[
 |\partial_\mu\phiN_\sigma(u)|
 =\frac{|u|}{\sigma^2}\phiN_\sigma(u),
\qquad
 |\partial_\sigma\phiN_\sigma(u)|
 \le\left(\frac1\sigma+\frac{u^2}{\sigma^3}\right)\phiN_\sigma(u).
\]
Use $k_b(y-\mu-u)\le e^{|u|/b}k_b(y-\mu)$ and the uniformly finite
Gaussian exponential moments after $u=\sigma z$. This gives an upper bound
$C\sigma^{-1}k_b(y-\mu)$ for the sum of the derivative magnitudes. Divide
by the component lower bound. The last bound follows by comparing both
components with their centered Laplace kernels and using
$k_b(y-\mu_j)/k_b(y-\mu_l)\le e^{|\mu_j-\mu_l|/b}$.
\end{proof}

\begin{lemma}[Global posterior envelope under Laplace error]
\label{lem:laplace-posterior-envelope}
Let
\[
 E_h(x)=C\sup_{|\mu|\le M_h,\ h\le\sigma\le\bar\sigma}
 \phiN_\sigma(x-\mu)e^{|x-\mu|/b}.
\]
Then $q_f(x\mid y)\le E_h(x)$ for every
$f\in\mathcal F_h^{\rm reg}$, and
\[
 E_h(x)\le\frac Ch
 \exp\{-c(|x|-M_h-1)_+^2\}.
\]
Moreover, it holds that
\[
 \int E_h(x)dx\le C\frac{M_h+1}{h},
\]
and, consequently,
\[
 \int E_h(x)\sqrt{L_h+\log(e+|x|)}dx
 \le C\frac{M_h+1}{h}\sqrt{L_h}.
\]
\end{lemma}

\begin{proof}
Write the mixture posterior as
\[
 q_f(x\mid y)=\sum_jr_j(y)
 \frac{\phiN_{\sigma_j}(x-\mu_j)k_b(y-x)}
      {b_{\mu_j,\sigma_j}(y)},
 \qquad
 r_j(y)=\frac{\pi_jb_{\mu_j,\sigma_j}(y)}{g_f(y)}.
\]
The weights $r_j(y)$ sum to one. Apply
Lemma~\ref{lem:laplace-component-comparison} componentwise. Finally,
complete the square in
$-u^2/(2\sigma^2)+|u|/b$, use
$h\le\sigma\le\bar\sigma$, and minimize $|x-\mu|$ over
$|\mu|\le M_h$. The pointwise and integrated bounds follow from elementary
Gaussian-tail integration.
\end{proof}

\begin{lemma}[Covering of the Laplace posterior-kernel class]
\label{lem:laplace-posterior-covering}
For fixed $x$, let
$\mathcal Q_h(x)=\{q_f(x\mid\cdot):f\in\mathcal F_h^{\rm reg}\}$. There is
$A_h(x)\ge e$ such that
\[
 \log A_h(x)\le C\{L_h+\log(e+|x|)\}
\]
and, for $0<\epsilon\le1$,
\[
 \log N\{\epsilon E_h(x),\mathcal Q_h(x),\|\cdot\|_\infty\}
 \le C K_h\log\{A_h(x)/\epsilon\}.
\]
The same bound holds on the closed simplex; hence it holds on the
positive-floor subclass in~\eqref{main-eq:local-regularized-sieve}.
\end{lemma}

\begin{proof}
For an order-$K$ mixture set
$a_j(x)=\phiN_{\sigma_j}(x-\mu_j)$,
$b_j(y)=b_{\mu_j,\sigma_j}(y)$,
$c_j(x,y)=a_j(x)k_b(y-x)$,
$N=\sum_j\pi_jc_j$, and $D=\sum_j\pi_jb_j$.
Choose an index $l$ with $\pi_l\ge1/K$. By the component comparison,
$b_j/D\le CKe^{2M_h/b}$.

Use independent simplex coordinates
$\alpha_j=\pi_j$, $j<K$, and
$\pi_K=1-\sum_{j<K}\alpha_j$. Then
\[
 \partial_{\alpha_j}q=(c_j-c_K)/D-q(b_j-b_K)/D,
\]
so
$|\partial_{\alpha_j}q|\le CKe^{2M_h/b}E_h(x)$.
For locations and scales,
\[
 \partial_{\mu_j}q
 =\frac{\pi_j}{D}\{\partial_{\mu_j}c_j-q\partial_{\mu_j}b_j\},
\]
and analogously for $\sigma_j$. Since
$\pi_jb_j/D\le1$, Lemma~\ref{lem:laplace-component-comparison} 
gives
\[
 |\partial_{\mu_j}q|
 \le CE_h(x)\{h^{-1}+|x-\mu_j|h^{-2}\},
\]
\[
 |\partial_{\sigma_j}q|
 \le CE_h(x)\{h^{-1}+|x-\mu_j|^2h^{-3}\}.
\]
Thus,
\[
 \sup_y\|\nabla_\theta q_f(x\mid y)\|_\infty
 \le E_h(x)B_h(x),
\]
where
$B_h(x)=C\{K_he^{2M_h/b}+h^{-3}(1+|x|+M_h)^2\}$.
Construct a net inside the simplex and Cartesian nets for locations and
scales with mesh proportional to
$\epsilon/(K_hB_h(x))$. The line segment between two simplex points remains
inside the simplex, so the mean-value theorem gives a uniform
$\epsilon E_h(x)$ net. Counting the grid points, and then taking the union
over $K\le K_h$, proves the result. Since $M_h=O(L_h)$, the exponential
factor contributes only $O(L_h)$ to the logarithm of the covering constant.
\end{proof}

\section{Optimization gap of the EM implementation}
\label{supp_EMoptim}
Optimization-gap bounds and their consequences for the direct and posterior estimators.

Let
\(\mathcal M_n(f)\)
denote the likelihood objective function in~\eqref{main-eq:penalized-sieve-estimator}, and
\(\widehat f^{\,\mathrm{EM}}\) be the density estimated by the numerical EM implementation.

\begin{corollary}[Effect of approximate EM optimization]
\label{cor:em-error}
Suppose that Assumptions~\ref{main-ass:primitive-approximation}--\ref{main-ass:primitive-penalty} hold. Define
\(
    e_n^{\mathrm{EM}}
    =
    b_n
    +
    c_n^{1/2}
    +
    \left\{
        \frac{\operatorname{pen}_n(f_n^\circ)}{n}
    \right\}^{1/2}
    +
    a_n^{1/2}.
\)
If there exists a deterministic sequence
\(a_n\to0\) such that
\[
    \Delta_n
    =
    \sup_{f\in\mathcal F_n}\mathcal M_n(f)
    -
    \mathcal M_n(\widehat f^{\,\mathrm{EM}})
    =O_p(a_n),
\]
then
\[
    \KL(g_{f_0},g_{\widehat f^{\,\mathrm{EM}}})^{1/2}
    =
    O_p(e_n^{\mathrm{EM}}).
\]
\end{corollary}

\begin{proof}
    Since $\Delta_n=O_p(a_n)$, we have
\(
    \mathcal M_n(\widehat f^{\,\mathrm{EM}})
    \ge
    \sup_{f\in\mathcal F_n}\mathcal M_n(f)-\Delta_n
    \ge
    \mathcal M_n(f_n^\circ)-\Delta_n .
\)
Repeating the full-class likelihood comparison in the proof of Proposition~\ref{main-prop:observed-kl-rate},
and adding the optimization gap \(\Delta_n\), gives
\[
    \KL(g_{f_0},g_{\widehat f^{\,\mathrm{EM}}})=O_p\left(b_n^2+c_n+\frac{\operatorname{pen}_n(f_n^\circ)}{n}+a_n\right).
\]
\end{proof}

\section{Posterior moments under Laplace errors}
\label{app:laplace-posterior-moments}
This section contains the conditional posterior-moment calculations used when the measurement error is Laplace, and their numerically stable implementation.

Suppose that \(X\mid C=k\sim N(\mu,\sigma^2)\) and \(\varepsilon\sim \mathrm{Laplace}(0,b)\), independently. Then
\[
p(x\mid y,C=k)\propto \phi(x;\mu,\sigma^2)\exp\{-|y-x|/b\}.
\]
Splitting the support at \(x=y\), the conditional density is a two-piece mixture of truncated normal kernels. For \(x\le y\), the kernel is proportional to a normal density with mean \(m_L=\mu+\sigma^2/b\), truncated to \((-\infty,y]\). For \(x>y\), it is proportional to a normal density with mean \(m_R=\mu-\sigma^2/b\), truncated to \([y,\infty)\). The corresponding unnormalized weights are
\[
A_L=\exp\left\{\frac{\mu-y}{b}+\frac{\sigma^2}{2b^2}\right\}\Phi\left(\frac{y-\mu-\sigma^2/b}{\sigma}\right),
\]
and
\[
A_R=\exp\left\{\frac{y-\mu}{b}+\frac{\sigma^2}{2b^2}\right\}\left[1-\Phi\left(\frac{y-\mu+\sigma^2/b}{\sigma}\right)\right].
\]
After normalization, the first two posterior moments are obtained using standard truncated-normal formulas. In the implementation, all weights are evaluated on the log scale to avoid numerical underflow in tail regions.

\section{Numerical safeguards}
\label{sec:numerical-safeguards}
This section contains numerical safeguards and tuning choices for optimization, posterior reconstruction, and the benchmark estimators.

All estimators are implemented with fixed numerical safeguards to avoid
unstable likelihood evaluations, degenerate mixture components, and
ill-conditioned algebra steps. The Gaussian-mixture sieve is the
constrained parameter space used in the likelihood optimization. For each candidate order, the constraints are
imposed at initialization and at every EM update: component weights are
projected onto the simplex with the stated lower bound by the exact
water-filling solution, locations are clipped to their admissible interval,
and component variances are clipped to the squared scale bounds. The selected
fit is then audited against all constraints; a replication is not accepted if
the selected fit fails to converge or violates a bound.

Estimator $\widehat f_{\post}$ uses the exact fitted observed density in the
denominator. Under Gaussian error it is evaluated through the conjugate
conditional-mixture representation, and under Laplace error through log-scale
posterior ratios. No denominator floor is used. MIX, POST, DEC, and PC are
reported in their raw form, without final positivity clipping or finite-grid
renormalization; QP retains only the nonnegativity and unit-mass constraints
that define that estimator. Table~\ref{tab:numerical-safeguards} summarizes the
main implementation choices.

\begingroup
\small
\setlength{\tabcolsep}{4pt}
\renewcommand{\arraystretch}{1.10}
\begin{longtable}
{p{0.18\textwidth}p{0.45\textwidth}p{0.3\textwidth}}
\caption{Numerical safeguards and tuning choices used in the simulations.}
\label{tab:numerical-safeguards}\\
\toprule
Estimator or step & Implementation & Purpose \\
\midrule
\endfirsthead
\caption[]{Numerical safeguards and tuning choices used in the simulations.}\\
\toprule
Estimator or step & Implementation & Purpose \\
\midrule
\endhead
\midrule
\multicolumn{3}{r}{\textit{Continued on next page}}\\
\endfoot
\bottomrule
\endlastfoot
Mixture order
& Ordinary BIC over $K=1,\ldots,K_{\max,n}$, with
$K_{\max,n}=\max\{2,\lfloor n^{1/3}(\log n)^{-1/3}\rfloor\}$; the caps are
$3,4,5,8$ for $n=200,500,1000,5000$
& Controls mixture complexity within the theoretical sieve \\
Mixture scales
& $0.10\leq \sigma_j\leq 5$ for every component
& Prevents collapsing and excessively diffuse components \\
Mixture locations
& $|\mu_j|\leq 8\sqrt{\log n}$ for every component
& Restricts the location range while allowing it to grow with $n$ \\
Mixture weights
& $\pi_j\geq n^{-0.60}$ and $\sum_j\pi_j=1$
& Prevents asymptotically vanishing fitted components \\
Sieve-constraint enforcement
& Exact lower-bounded simplex projection for the weights, clipping of locations
and variances at every EM M-step, constrained initialization, and a final
constraint audit; invalid selected fits trigger a deterministic retry
& Implements the declared sieve throughout optimization \\
Gaussian-mixture optimization
& Adaptive multistart constrained EM: one start for $K=1$; for $K>1$, a
quantile start, a split warm start when available, and additional seeded starts
(up to 8, with at least 3 unless early agreement is reached); 600 iterations
and tolerance $10^{-6}$
& Reduces sensitivity to local optima and requires convergence of the selected fit \\
Likelihood evaluation
& Log-sum-exp computations
& Stable likelihood and responsibility evaluation \\
Posterior estimator
& Closed-form conjugate conditional mixtures under Gaussian error and log-scale
posterior ratios under Laplace error; no denominator floor, clipping, or
post-hoc renormalization
& Evaluates the posterior reconstruction as defined \\
Numerical evaluation
& Common 801-point design-specific grid; raw finite-grid evaluations for all
methods, with no positivity clipping or mass correction except for the defining
QP simplex constraint
& Makes the reported ISE comparisons use the estimators themselves \\
Fourier deconvolution
& Delaigle--Gijbels plug-in bandwidth; frequencies with
$|\phi_\varepsilon(t)|<10^{-8}$ are excluded
& Stabilizes Fourier inversion \\
PC estimator
& Penalized contrast over the stated projection grid; frequencies with
$|\phi_\varepsilon(t)|<10^{-12}$ are excluded
& Avoids unstable projection coefficients \\
QP estimator
& Nonnegative unit-mass simplex, second-difference regularization, SURE choice
of the penalty over a fixed grid, and ADMM with Cholesky jitter and
warm-started continuation
& Controls roughness and improves conditioning \\
Skew-normal shape regularization
& Maximize $\ell(\theta)-Q(\boldsymbol{\alpha})$,
$Q(\boldsymbol{\alpha})=c_1\sum_{j=1}^{K}\log(1+c_2\alpha_j^2)$,
$c_1=0.875913$ and $c_2=0.856250$
& Regularizes extreme shape parameters \\
Skew-normal shape bounds
& $-30\leq\alpha_j\leq30$
& Numerical safeguard against diverging shape estimates \\
Skew-normal likelihood evaluation
& Closed form under Gaussian error; 48-node Gauss--Hermite quadrature under
Laplace error, with analytic gradient
& Stable convolution-likelihood evaluation \\
Skew-normal optimization
& L-BFGS-B from 30 initial configurations for each candidate $K$
& Reduces sensitivity to local optima \\
\end{longtable}
\endgroup

\section{Selected mixture orders}
\label{app:selected-k-details}
This section contains selected mixture orders across simulation designs and sample sizes.
The average selected mixture
order for each latent density is reported in Table~\ref{tab:selected-k-by-latent-n}. It shows
which designs drive the increase in $\widehat K$. The Gaussian and nearly-normal
baselines remain essentially one-component, the heavy-tailed Cauchy design
requires the largest number of Gaussian components, and the two normal-mixture
designs are represented by about two components at moderate and large sample
sizes. The asymmetric positive-support designs show increasing selected
complexity with $n$. The two mixed-gamma designs also approach two components,
but this does not imply that a symmetric Gaussian sieve reproduces their
component shapes exactly.
\begin{table}[htb]
\centering
\caption{Average selected number of Gaussian mixture components
$\widehat K$ by latent density and sample size. Entries are averages over
error laws, noise levels and Monte Carlo replications.}
\label{tab:selected-k-by-latent-n}
\scriptsize
\setlength{\tabcolsep}{4pt}
\renewcommand{\arraystretch}{1.10}
\begin{tabular}{lrrrr}
\toprule
Latent density & $n=200$ & $n=500$ & $n=1000$ & $n=5000$ \\
\midrule
\multicolumn{5}{l}{\textbf{Panel A. Symmetric-unimodal}} \\
\addlinespace[2pt]
Standard normal & 1.00 & 1.00 & 1.00 & 1.00 \\
\(t_5\) & 1.24 & 1.56 & 1.85 & 2.16 \\
Laplace & 1.40 & 1.78 & 1.98 & 2.13 \\
Cauchy & 3.00 & 3.99 & 4.07 & 5.44 \\
Nearly normal & 1.00 & 1.00 & 1.00 & 1.03 \\
\addlinespace[3pt]
\midrule
\multicolumn{5}{l}{\textbf{Panel B. Asymmetric-unimodal}} \\
\addlinespace[2pt]
Gamma & 1.38 & 1.84 & 2.04 & 2.60 \\
Comte chi-square & 1.97 & 2.24 & 2.54 & 3.33 \\
Cai chi-square & 1.87 & 2.15 & 2.41 & 3.18 \\
Beta & 1.24 & 1.68 & 1.95 & 2.38 \\
\addlinespace[3pt]
\midrule
\multicolumn{5}{l}{\textbf{Panel C. Bimodal}} \\
\addlinespace[2pt]
Comte Gamma mixture & 1.25 & 1.59 & 1.80 & 2.02 \\
Cai Gamma mixture & 1.23 & 1.58 & 1.83 & 2.01 \\
Normal mixture G & 1.77 & 1.97 & 2.00 & 2.00 \\
Normal mixture Y & 1.89 & 2.01 & 2.01 & 2.00 \\
\bottomrule
\end{tabular}
\end{table}

\section{Complete scenario-level simulation tables}
\label{app:complete-simulation-table}
This section contains complete scenario-level MISE results and standard errors for all methods and simulation designs. See Tables \ref{tab:symmetric-unimodal-mise-se}--\ref{tab:bimodal-mise-se}.


\begingroup
\scriptsize
\setlength{\tabcolsep}{4.0pt}
\renewcommand{\arraystretch}{1.05}
\begin{longtable}{rlcccccc}
\caption{Comparison for the symmetric unimodal latent densities in terms of \(10^3\times\mathrm{MISE}\) (SE). Row minima for MISE are shown in bold.}\label{tab:symmetric-unimodal-mise-se}\\
\toprule
$n$ & Error & $c$ & $\mathrm{MIX}$ & ${\mathrm{POST}}$ & DEC & PC & QP \\
\midrule
\endfirsthead
\multicolumn{8}{c}{\tablename\ \thetable\ -- continued from previous page}\\
\toprule
$n$ & Error & $c$ & $\mathrm{MIX}$ & ${\mathrm{POST}}$ & DEC & PC & QP \\
\midrule
\endhead
\midrule\multicolumn{8}{r}{Continued on next page}\\
\endfoot
\bottomrule
\endlastfoot
\multicolumn{8}{l}{\textit{Standard normal}} \\
\multirow{4}{*}{200} & \multirow{2}{*}{Gaussian} & 0.35 & \textbf{1.60} (0.13) & 3.37 (0.15) & 5.21 (0.15) & 6.06 (0.20) & 12.87 (1.02) \\
 &  & 0.60 & \textbf{2.12} (0.26) & 2.61 (0.27) & 9.06 (0.22) & 24.57 (0.93) & 15.76 (1.30) \\
 & \multirow{2}{*}{Laplace} & 0.35 & \textbf{1.60} (0.07) & 4.39 (0.11) & 4.95 (0.16) & 5.79 (0.19) & 14.88 (1.07) \\
 &  & 0.60 & \textbf{2.07} (0.10) & 3.04 (0.12) & 6.92 (0.20) & 11.53 (0.40) & 16.57 (1.24) \\
\multirow{4}{*}{500} & \multirow{2}{*}{Gaussian} & 0.35 & \textbf{0.60} (0.03) & 1.23 (0.04) & 2.90 (0.07) & 2.28 (0.08) & 6.60 (0.75) \\
 &  & 0.60 & \textbf{0.77} (0.03) & 0.95 (0.04) & 5.42 (0.12) & 9.61 (0.35) & 8.87 (1.00) \\
 & \multirow{2}{*}{Laplace} & 0.35 & \textbf{0.69} (0.08) & 1.73 (0.08) & 2.55 (0.07) & 2.23 (0.08) & 7.34 (0.75) \\
 &  & 0.60 & \textbf{0.86} (0.04) & 1.20 (0.04) & 3.82 (0.11) & 4.56 (0.16) & 9.25 (0.83) \\
\multirow{4}{*}{1000} & \multirow{2}{*}{Gaussian} & 0.35 & \textbf{0.28} (0.01) & 0.63 (0.02) & 1.82 (0.04) & 1.19 (0.04) & 5.61 (0.81) \\
 &  & 0.60 & \textbf{0.41} (0.02) & 0.52 (0.02) & 4.45 (0.09) & 4.92 (0.19) & 7.20 (1.02) \\
 & \multirow{2}{*}{Laplace} & 0.35 & \textbf{0.31} (0.02) & 0.83 (0.02) & 1.58 (0.04) & 1.09 (0.04) & 4.16 (0.60) \\
 &  & 0.60 & \textbf{0.39} (0.02) & 0.57 (0.02) & 2.79 (0.06) & 2.29 (0.08) & 6.76 (0.79) \\
\multirow{4}{*}{5000} & \multirow{2}{*}{Gaussian} & 0.35 & \textbf{0.06} (0.00) & 0.13 (0.00) & 0.85 (0.02) & 0.25 (0.01) & 2.54 (0.55) \\
 &  & 0.60 & \textbf{0.08} (0.00) & 0.10 (0.00) & 2.40 (0.04) & 1.02 (0.04) & 4.47 (0.76) \\
 & \multirow{2}{*}{Laplace} & 0.35 & \textbf{0.06} (0.00) & 0.17 (0.00) & 0.66 (0.01) & 0.23 (0.01) & 1.48 (0.22) \\
 &  & 0.60 & \textbf{0.08} (0.00) & 0.11 (0.00) & 1.23 (0.02) & 0.45 (0.01) & 2.32 (0.43) \\
\addlinespace[2pt]
\multicolumn{8}{l}{\textit{\(t_5\)}} \\
\multirow{4}{*}{200} & \multirow{2}{*}{Gaussian} & 0.35 & 7.24 (0.33) & \textbf{5.26} (0.32) & 8.00 (0.22) & 6.58 (0.21) & 11.58 (1.03) \\
 &  & 0.60 & 8.41 (0.55) & \textbf{6.60} (0.54) & 14.99 (0.40) & 27.76 (0.97) & 15.64 (1.14) \\
 & \multirow{2}{*}{Laplace} & 0.35 & 7.44 (0.65) & 6.58 (0.64) & 7.64 (0.21) & \textbf{6.24} (0.20) & 13.43 (1.12) \\
 &  & 0.60 & 7.03 (0.36) & \textbf{5.73} (0.33) & 11.70 (0.29) & 12.11 (0.37) & 16.29 (1.34) \\
\multirow{4}{*}{500} & \multirow{2}{*}{Gaussian} & 0.35 & 2.99 (0.12) & \textbf{2.33} (0.09) & 4.69 (0.13) & 3.13 (0.09) & 8.95 (1.07) \\
 &  & 0.60 & 6.45 (0.57) & \textbf{5.07} (0.57) & 11.25 (0.21) & 11.15 (0.42) & 11.03 (1.10) \\
 & \multirow{2}{*}{Laplace} & 0.35 & 3.13 (0.13) & \textbf{2.64} (0.10) & 4.07 (0.10) & 2.68 (0.07) & 6.89 (0.72) \\
 &  & 0.60 & 5.15 (0.18) & \textbf{3.77} (0.16) & 7.07 (0.17) & 5.35 (0.17) & 11.38 (1.14) \\
\multirow{4}{*}{1000} & \multirow{2}{*}{Gaussian} & 0.35 & 1.31 (0.06) & \textbf{1.20} (0.05) & 3.61 (0.08) & 1.82 (0.04) & 8.24 (1.15) \\
 &  & 0.60 & 2.50 (0.12) & \textbf{2.07} (0.09) & 9.60 (0.18) & 5.92 (0.20) & 8.38 (0.93) \\
 & \multirow{2}{*}{Laplace} & 0.35 & 1.29 (0.05) & \textbf{1.28} (0.04) & 2.67 (0.08) & 1.69 (0.04) & 4.67 (0.64) \\
 &  & 0.60 & 3.06 (0.13) & \textbf{2.39} (0.11) & 4.56 (0.12) & 2.99 (0.08) & 8.35 (0.90) \\
\multirow{4}{*}{5000} & \multirow{2}{*}{Gaussian} & 0.35 & 0.44 (0.01) & \textbf{0.34} (0.01) & 1.64 (0.03) & 0.82 (0.01) & 4.32 (0.86) \\
 &  & 0.60 & 0.79 (0.03) & \textbf{0.71} (0.02) & 5.61 (0.07) & 1.55 (0.04) & 5.37 (0.81) \\
 & \multirow{2}{*}{Laplace} & 0.35 & 0.43 (0.01) & \textbf{0.33} (0.01) & 1.00 (0.02) & 0.78 (0.01) & 1.37 (0.25) \\
 &  & 0.60 & 0.62 (0.02) & \textbf{0.54} (0.02) & 1.96 (0.04) & 1.05 (0.02) & 2.93 (0.48) \\
\addlinespace[2pt]
\multicolumn{8}{l}{\textit{Laplace}} \\
\multirow{4}{*}{200} & \multirow{2}{*}{Gaussian} & 0.35 & 22.94 (1.04) & 17.36 (1.02) & 18.94 (0.30) & \textbf{16.64} (0.22) & 23.25 (1.48) \\
 &  & 0.60 & 33.79 (1.58) & \textbf{28.07} (1.60) & 30.70 (0.53) & 37.77 (1.03) & 28.44 (1.13) \\
 & \multirow{2}{*}{Laplace} & 0.35 & 21.39 (1.15) & 15.62 (1.13) & 16.44 (0.33) & \textbf{15.49} (0.17) & 21.08 (1.53) \\
 &  & 0.60 & 30.10 (1.66) & 24.23 (1.69) & 24.75 (0.47) & \textbf{22.65} (0.41) & 26.28 (1.11) \\
\multirow{4}{*}{500} & \multirow{2}{*}{Gaussian} & 0.35 & 8.42 (0.26) & \textbf{7.66} (0.23) & 13.03 (0.19) & 12.99 (0.08) & 16.37 (1.19) \\
 &  & 0.60 & 17.81 (0.56) & \textbf{15.49} (0.47) & 27.45 (0.29) & 20.88 (0.36) & 21.36 (0.86) \\
 & \multirow{2}{*}{Laplace} & 0.35 & 8.62 (0.35) & \textbf{7.58} (0.32) & 11.78 (0.21) & 12.73 (0.08) & 15.12 (1.08) \\
 &  & 0.60 & 18.86 (0.87) & 15.58 (0.84) & 17.54 (0.30) & \textbf{15.22} (0.17) & 21.93 (1.39) \\
\multirow{4}{*}{1000} & \multirow{2}{*}{Gaussian} & 0.35 & 6.44 (0.11) & \textbf{5.74} (0.10) & 12.12 (0.15) & 11.79 (0.05) & 13.53 (1.02) \\
 &  & 0.60 & 9.19 (0.27) & \textbf{8.78} (0.22) & 21.51 (0.36) & 15.40 (0.18) & 19.35 (0.96) \\
 & \multirow{2}{*}{Laplace} & 0.35 & 5.79 (0.12) & \textbf{4.93} (0.11) & 7.88 (0.12) & 11.66 (0.04) & 10.91 (0.84) \\
 &  & 0.60 & 8.33 (0.25) & \textbf{7.58} (0.19) & 12.60 (0.18) & 12.86 (0.08) & 15.31 (0.98) \\
\multirow{4}{*}{5000} & \multirow{2}{*}{Gaussian} & 0.35 & 4.68 (0.07) & \textbf{4.09} (0.06) & 7.06 (0.07) & 10.78 (0.01) & 8.60 (0.87) \\
 &  & 0.60 & 6.44 (0.07) & \textbf{6.33} (0.07) & 17.34 (0.14) & 11.53 (0.04) & 11.84 (0.59) \\
 & \multirow{2}{*}{Laplace} & 0.35 & 3.86 (0.07) & \textbf{3.09} (0.05) & 4.68 (0.05) & 10.75 (0.01) & 4.67 (0.53) \\
 &  & 0.60 & 5.25 (0.06) & \textbf{4.88} (0.06) & 7.43 (0.08) & 11.00 (0.02) & 7.52 (0.45) \\
\addlinespace[2pt]
\multicolumn{8}{l}{\textit{Cauchy}} \\
\multirow{4}{*}{200} & \multirow{2}{*}{Gaussian} & 0.35 & 14.55 (0.43) & \textbf{4.62} (0.11) & 46.99 (0.50) & 7.30 (0.18) & 6.78 (0.42) \\
 &  & 0.60 & 17.28 (0.44) & \textbf{6.35} (0.15) & 50.63 (0.50) & 27.76 (0.76) & 8.53 (0.46) \\
 & \multirow{2}{*}{Laplace} & 0.35 & 14.92 (0.43) & \textbf{5.39} (0.15) & 47.27 (0.48) & 6.77 (0.17) & 6.85 (0.47) \\
 &  & 0.60 & 16.48 (0.44) & \textbf{6.19} (0.14) & 49.76 (0.51) & 13.09 (0.31) & 9.21 (0.59) \\
\multirow{4}{*}{500} & \multirow{2}{*}{Gaussian} & 0.35 & 2.87 (0.14) & \textbf{1.99} (0.05) & 43.89 (0.31) & 3.12 (0.07) & 4.03 (0.38) \\
 &  & 0.60 & 3.63 (0.16) & \textbf{2.77} (0.07) & 46.39 (0.25) & 11.77 (0.32) & 6.58 (0.50) \\
 & \multirow{2}{*}{Laplace} & 0.35 & 2.57 (0.06) & \textbf{2.38} (0.05) & 43.45 (0.34) & 2.89 (0.07) & 3.63 (0.25) \\
 &  & 0.60 & 2.93 (0.07) & \textbf{2.43} (0.05) & 45.99 (0.26) & 5.37 (0.14) & 4.92 (0.34) \\
\multirow{4}{*}{1000} & \multirow{2}{*}{Gaussian} & 0.35 & 2.55 (0.13) & \textbf{1.30} (0.03) & 37.94 (0.11) & 1.70 (0.04) & 2.68 (0.32) \\
 &  & 0.60 & 3.16 (0.14) & \textbf{2.09} (0.05) & 37.99 (0.10) & 5.90 (0.17) & 4.64 (0.40) \\
 & \multirow{2}{*}{Laplace} & 0.35 & 2.13 (0.04) & \textbf{1.45} (0.03) & 37.94 (0.10) & 1.58 (0.03) & 2.12 (0.15) \\
 &  & 0.60 & 2.55 (0.05) & \textbf{1.82} (0.04) & 38.03 (0.11) & 2.84 (0.06) & 3.76 (0.29) \\
\multirow{4}{*}{5000} & \multirow{2}{*}{Gaussian} & 0.35 & 1.13 (0.07) & \textbf{0.41} (0.01) & 20.28 (0.02) & 0.57 (0.01) & 1.12 (0.17) \\
 &  & 0.60 & 2.18 (0.10) & \textbf{1.28} (0.04) & 20.27 (0.02) & 1.43 (0.03) & 2.87 (0.38) \\
 & \multirow{2}{*}{Laplace} & 0.35 & 0.42 (0.01) & \textbf{0.33} (0.01) & 20.31 (0.02) & 0.56 (0.01) & 0.76 (0.09) \\
 &  & 0.60 & 0.82 (0.04) & \textbf{0.59} (0.02) & 20.29 (0.02) & 0.80 (0.01) & 1.39 (0.14) \\
\addlinespace[2pt]
\multicolumn{8}{l}{\textit{Nearly Normal}} \\
\multirow{4}{*}{200} & \multirow{2}{*}{Gaussian} & 0.35 & \textbf{12.29} (0.53) & 22.03 (0.64) & 32.39 (0.91) & 40.30 (1.37) & 56.79 (6.57) \\
 &  & 0.60 & \textbf{15.79} (0.82) & 18.53 (0.84) & 57.32 (1.35) & 165.33 (6.25) & 69.37 (6.62) \\
 & \multirow{2}{*}{Laplace} & 0.35 & \textbf{11.66} (0.54) & 29.33 (0.83) & 31.76 (0.99) & 39.18 (1.40) & 61.80 (6.84) \\
 &  & 0.60 & \textbf{16.64} (0.73) & 22.06 (0.79) & 40.93 (1.30) & 78.89 (2.55) & 60.51 (5.93) \\
\multirow{4}{*}{500} & \multirow{2}{*}{Gaussian} & 0.35 & \textbf{6.18} (0.24) & 9.07 (0.29) & 17.58 (0.46) & 16.21 (0.55) & 33.26 (4.77) \\
 &  & 0.60 & \textbf{8.34} (0.33) & 8.64 (0.33) & 32.57 (0.76) & 67.38 (2.47) & 52.75 (7.26) \\
 & \multirow{2}{*}{Laplace} & 0.35 & \textbf{6.59} (0.24) & 12.48 (0.34) & 17.18 (0.48) & 16.20 (0.56) & 26.57 (2.91) \\
 &  & 0.60 & \textbf{7.49} (0.31) & 9.55 (0.36) & 26.23 (0.74) & 32.26 (1.10) & 44.64 (5.12) \\
\multirow{4}{*}{1000} & \multirow{2}{*}{Gaussian} & 0.35 & \textbf{4.23} (0.14) & 4.81 (0.15) & 11.54 (0.28) & 8.43 (0.27) & 21.59 (4.12) \\
 &  & 0.60 & 4.93 (0.18) & \textbf{4.47} (0.17) & 26.56 (0.48) & 32.33 (1.11) & 43.80 (6.69) \\
 & \multirow{2}{*}{Laplace} & 0.35 & \textbf{4.11} (0.13) & 5.92 (0.15) & 9.89 (0.24) & 7.37 (0.25) & 13.44 (1.61) \\
 &  & 0.60 & \textbf{4.68} (0.17) & 4.96 (0.17) & 16.18 (0.40) & 15.35 (0.52) & 25.11 (3.15) \\
\multirow{4}{*}{5000} & \multirow{2}{*}{Gaussian} & 0.35 & 2.75 (0.06) & \textbf{1.43} (0.06) & 5.66 (0.11) & 1.79 (0.06) & 12.13 (2.85) \\
 &  & 0.60 & 2.91 (0.07) & \textbf{1.93} (0.07) & 14.26 (0.21) & 6.60 (0.24) & 21.12 (4.36) \\
 & \multirow{2}{*}{Laplace} & 0.35 & 2.63 (0.05) & 1.57 (0.05) & 3.89 (0.08) & \textbf{1.55} (0.05) & 4.95 (0.84) \\
 &  & 0.60 & 2.40 (0.07) & \textbf{1.71} (0.07) & 7.62 (0.14) & 3.44 (0.12) & 11.08 (1.73) \\
\end{longtable}
\endgroup

\begingroup
\scriptsize
\setlength{\tabcolsep}{4.0pt}
\renewcommand{\arraystretch}{1.05}
\begin{longtable}{rlcccccc}
\caption{Comparison for the asymmetric unimodal latent densities in terms of \(10^3\times\mathrm{MISE}\) (SE). Row minima for MISE are shown in bold.}\label{tab:asymmetric-unimodal-gaussian-sieve-mise-se}\\
\toprule
$n$ & Error & $c$ & $\mathrm{MIX}$ & ${\mathrm{POST}}$ & DEC & PC & QP \\
\midrule
\endfirsthead
\multicolumn{8}{c}{\tablename\ \thetable\ -- continued from previous page}\\
\toprule
$n$ & Error & $c$ & $\mathrm{MIX}$ & ${\mathrm{POST}}$ & DEC & PC & QP \\
\midrule
\endhead
\midrule\multicolumn{8}{r}{Continued on next page}\\
\endfoot
\bottomrule
\endlastfoot
\multicolumn{8}{l}{\textit{Gamma}} \\
\multirow{4}{*}{200} & \multirow{2}{*}{Gaussian} & 0.35 & 15.96 (0.78) & 11.07 (0.79) & 6.94 (0.19) & \textbf{6.57} (0.21) & 16.49 (1.67) \\
 &  & 0.60 & 22.61 (1.46) & 17.42 (1.50) & \textbf{13.40} (0.33) & 25.05 (0.87) & 17.32 (1.26) \\
 & \multirow{2}{*}{Laplace} & 0.35 & 14.55 (0.68) & 10.56 (0.67) & 6.71 (0.17) & \textbf{5.96} (0.19) & 13.38 (1.03) \\
 &  & 0.60 & 21.21 (1.78) & 15.98 (1.81) & \textbf{10.31} (0.23) & 11.64 (0.40) & 19.47 (1.50) \\
\multirow{4}{*}{500} & \multirow{2}{*}{Gaussian} & 0.35 & 6.25 (0.28) & 4.79 (0.28) & 4.39 (0.10) & \textbf{2.97} (0.08) & 8.68 (1.04) \\
 &  & 0.60 & 12.19 (0.40) & 10.06 (0.40) & \textbf{9.54} (0.17) & 9.98 (0.38) & 11.49 (1.18) \\
 & \multirow{2}{*}{Laplace} & 0.35 & 5.52 (0.16) & 4.16 (0.15) & 3.71 (0.09) & \textbf{2.67} (0.08) & 5.64 (0.53) \\
 &  & 0.60 & 11.52 (0.42) & 8.73 (0.41) & 5.96 (0.13) & \textbf{4.95} (0.15) & 10.01 (0.93) \\
\multirow{4}{*}{1000} & \multirow{2}{*}{Gaussian} & 0.35 & 4.41 (0.15) & 3.03 (0.16) & 3.10 (0.06) & \textbf{1.63} (0.04) & 4.20 (0.56) \\
 &  & 0.60 & 6.21 (0.21) & 5.64 (0.21) & 8.76 (0.12) & \textbf{5.21} (0.16) & 10.73 (1.26) \\
 & \multirow{2}{*}{Laplace} & 0.35 & 4.07 (0.11) & 2.80 (0.11) & 2.52 (0.06) & \textbf{1.56} (0.04) & 4.39 (0.60) \\
 &  & 0.60 & 5.91 (0.33) & 5.14 (0.32) & 4.26 (0.10) & \textbf{2.87} (0.08) & 8.63 (0.98) \\
\multirow{4}{*}{5000} & \multirow{2}{*}{Gaussian} & 0.35 & 1.64 (0.04) & 1.39 (0.04) & 1.45 (0.02) & \textbf{0.77} (0.01) & 4.64 (0.83) \\
 &  & 0.60 & 4.19 (0.07) & 3.80 (0.07) & 4.64 (0.05) & \textbf{1.48} (0.04) & 4.64 (0.68) \\
 & \multirow{2}{*}{Laplace} & 0.35 & 1.29 (0.03) & 1.02 (0.03) & 0.92 (0.02) & \textbf{0.74} (0.01) & 1.40 (0.21) \\
 &  & 0.60 & 2.95 (0.04) & 2.46 (0.04) & 1.77 (0.03) & \textbf{0.98} (0.02) & 3.97 (0.59) \\
\addlinespace[2pt]
\multicolumn{8}{l}{\textit{Comte chi-square}} \\
\multirow{4}{*}{200} & \multirow{2}{*}{Gaussian} & 0.35 & 53.28 (1.97) & 46.44 (2.03) & 34.87 (0.40) & \textbf{30.91} (0.23) & 35.34 (1.53) \\
 &  & 0.60 & 86.61 (4.30) & 82.21 (4.35) & 51.41 (0.55) & 49.78 (0.98) & \textbf{49.76} (1.46) \\
 & \multirow{2}{*}{Laplace} & 0.35 & 46.44 (1.41) & 38.45 (1.45) & \textbf{29.95} (0.38) & 29.98 (0.19) & 31.31 (1.26) \\
 &  & 0.60 & 71.27 (3.31) & 65.26 (3.37) & 40.56 (0.53) & \textbf{35.73} (0.41) & 41.37 (1.35) \\
\multirow{4}{*}{500} & \multirow{2}{*}{Gaussian} & 0.35 & 39.70 (1.07) & 35.24 (1.11) & 26.98 (0.20) & 27.24 (0.09) & \textbf{26.00} (1.14) \\
 &  & 0.60 & 49.48 (1.45) & 47.42 (1.46) & 47.18 (0.38) & \textbf{33.99} (0.38) & 38.30 (1.33) \\
 & \multirow{2}{*}{Laplace} & 0.35 & 33.43 (0.67) & 27.67 (0.69) & \textbf{21.73} (0.28) & 26.86 (0.08) & 22.60 (1.03) \\
 &  & 0.60 & 42.69 (1.21) & 39.28 (1.22) & 31.52 (0.40) & \textbf{29.29} (0.17) & 33.27 (1.21) \\
\multirow{4}{*}{1000} & \multirow{2}{*}{Gaussian} & 0.35 & 29.84 (0.86) & 28.08 (0.87) & \textbf{22.85} (0.26) & 25.94 (0.05) & 23.44 (1.32) \\
 &  & 0.60 & 45.15 (1.05) & 43.32 (1.07) & 37.57 (0.41) & \textbf{29.78} (0.21) & 36.11 (1.31) \\
 & \multirow{2}{*}{Laplace} & 0.35 & 23.12 (0.46) & 20.45 (0.45) & 18.39 (0.12) & 25.81 (0.04) & \textbf{16.58} (0.73) \\
 &  & 0.60 & 37.16 (0.85) & 34.38 (0.87) & 26.67 (0.20) & 26.94 (0.08) & \textbf{24.91} (1.12) \\
\multirow{4}{*}{5000} & \multirow{2}{*}{Gaussian} & 0.35 & 20.40 (0.33) & 19.75 (0.33) & 17.49 (0.08) & 25.05 (0.02) & \textbf{15.40} (0.96) \\
 &  & 0.60 & 29.02 (0.48) & 28.86 (0.48) & 34.50 (0.16) & 25.79 (0.05) & \textbf{24.44} (1.07) \\
 & \multirow{2}{*}{Laplace} & 0.35 & 13.00 (0.17) & 12.17 (0.15) & \textbf{9.28} (0.08) & 22.25 (0.27) & 9.29 (0.60) \\
 &  & 0.60 & 19.72 (0.23) & 19.08 (0.23) & 18.01 (0.10) & 25.25 (0.02) & \textbf{14.29} (0.71) \\
\addlinespace[2pt]
\multicolumn{8}{l}{\textit{Cai chi-square}} \\
\multirow{4}{*}{200} & \multirow{2}{*}{Gaussian} & 0.35 & 33.18 (1.52) & 27.96 (1.54) & 18.52 (0.26) & \textbf{14.42} (0.19) & 25.21 (1.63) \\
 &  & 0.60 & 61.90 (3.49) & 56.64 (3.56) & \textbf{30.13} (0.45) & 32.45 (0.86) & 33.37 (1.34) \\
 & \multirow{2}{*}{Laplace} & 0.35 & 29.58 (1.12) & 23.59 (1.12) & 16.96 (0.32) & \textbf{14.24} (0.21) & 21.84 (1.30) \\
 &  & 0.60 & 50.96 (2.76) & 44.69 (2.82) & 23.57 (0.42) & \textbf{19.80} (0.41) & 30.48 (1.61) \\
\multirow{4}{*}{500} & \multirow{2}{*}{Gaussian} & 0.35 & 24.08 (0.78) & 19.96 (0.81) & 12.90 (0.16) & \textbf{11.26} (0.09) & 17.91 (1.29) \\
 &  & 0.60 & 30.48 (1.48) & 28.81 (1.49) & 26.64 (0.24) & \textbf{17.66} (0.36) & 24.04 (1.03) \\
 & \multirow{2}{*}{Laplace} & 0.35 & 19.72 (0.47) & 15.06 (0.49) & 11.67 (0.17) & \textbf{10.96} (0.08) & 12.81 (0.86) \\
 &  & 0.60 & 25.98 (1.15) & 23.51 (1.16) & 17.58 (0.28) & \textbf{13.34} (0.17) & 20.68 (1.15) \\
\multirow{4}{*}{1000} & \multirow{2}{*}{Gaussian} & 0.35 & 16.18 (0.36) & 14.14 (0.36) & 12.02 (0.13) & \textbf{10.20} (0.05) & 14.63 (1.24) \\
 &  & 0.60 & 25.29 (0.80) & 23.82 (0.81) & 20.95 (0.31) & \textbf{13.06} (0.18) & 21.66 (1.24) \\
 & \multirow{2}{*}{Laplace} & 0.35 & 13.54 (0.28) & 11.12 (0.27) & \textbf{7.99} (0.09) & 10.02 (0.04) & 9.31 (0.69) \\
 &  & 0.60 & 20.81 (0.64) & 18.64 (0.65) & 12.89 (0.15) & \textbf{11.11} (0.09) & 17.25 (1.16) \\
\multirow{4}{*}{5000} & \multirow{2}{*}{Gaussian} & 0.35 & 10.30 (0.18) & 9.69 (0.18) & \textbf{7.06} (0.05) & 9.24 (0.02) & 8.49 (0.89) \\
 &  & 0.60 & 15.82 (0.32) & 15.62 (0.32) & 17.18 (0.12) & \textbf{9.86} (0.05) & 13.69 (0.83) \\
 & \multirow{2}{*}{Laplace} & 0.35 & 6.72 (0.10) & 5.92 (0.09) & 4.67 (0.04) & 9.22 (0.01) & \textbf{4.14} (0.36) \\
 &  & 0.60 & 10.01 (0.20) & 9.58 (0.20) & \textbf{7.47} (0.07) & 9.46 (0.03) & 7.96 (0.70) \\
\addlinespace[2pt]
\multicolumn{8}{l}{\textit{Beta}} \\
\multirow{4}{*}{200} & \multirow{2}{*}{Gaussian} & 0.35 & 19.93 (0.65) & 13.91 (0.71) & 7.64 (0.16) & \textbf{7.52} (0.21) & 15.99 (1.12) \\
 &  & 0.60 & 25.48 (2.05) & 20.48 (2.09) & \textbf{13.20} (0.25) & 23.83 (0.77) & 19.99 (1.19) \\
 & \multirow{2}{*}{Laplace} & 0.35 & 19.48 (0.79) & 13.55 (0.83) & 7.38 (0.16) & \textbf{7.00} (0.19) & 15.79 (1.14) \\
 &  & 0.60 & 20.59 (1.11) & 15.09 (1.15) & \textbf{10.20} (0.21) & 12.40 (0.40) & 19.97 (1.24) \\
\multirow{4}{*}{500} & \multirow{2}{*}{Gaussian} & 0.35 & 9.32 (0.42) & 7.54 (0.41) & 5.19 (0.09) & \textbf{3.90} (0.08) & 11.90 (1.24) \\
 &  & 0.60 & 18.68 (0.54) & 15.16 (0.58) & \textbf{9.06} (0.14) & 10.21 (0.32) & 12.47 (1.00) \\
 & \multirow{2}{*}{Laplace} & 0.35 & 8.42 (0.26) & 6.72 (0.25) & 4.26 (0.08) & \textbf{3.65} (0.08) & 7.61 (0.58) \\
 &  & 0.60 & 16.56 (0.62) & 12.80 (0.65) & 6.46 (0.13) & \textbf{6.04} (0.17) & 13.16 (1.08) \\
\multirow{4}{*}{1000} & \multirow{2}{*}{Gaussian} & 0.35 & 6.69 (0.20) & 5.23 (0.20) & 3.63 (0.05) & \textbf{2.70} (0.04) & 7.18 (0.80) \\
 &  & 0.60 & 9.77 (0.32) & 8.96 (0.30) & 8.40 (0.10) & \textbf{6.00} (0.19) & 10.27 (0.97) \\
 & \multirow{2}{*}{Laplace} & 0.35 & 6.00 (0.14) & 4.49 (0.15) & 3.42 (0.05) & \textbf{2.62} (0.04) & 5.75 (0.60) \\
 &  & 0.60 & 9.50 (0.40) & 8.24 (0.39) & 5.05 (0.09) & \textbf{3.94} (0.09) & 8.82 (0.79) \\
\multirow{4}{*}{5000} & \multirow{2}{*}{Gaussian} & 0.35 & 3.68 (0.10) & 3.23 (0.10) & 1.95 (0.02) & \textbf{1.78} (0.01) & 4.88 (0.69) \\
 &  & 0.60 & 5.49 (0.08) & 5.19 (0.08) & 5.00 (0.05) & \textbf{2.37} (0.04) & 6.34 (0.69) \\
 & \multirow{2}{*}{Laplace} & 0.35 & 2.81 (0.05) & 2.25 (0.04) & \textbf{1.31} (0.02) & 1.74 (0.01) & 2.14 (0.26) \\
 &  & 0.60 & 4.35 (0.05) & 3.81 (0.05) & 2.24 (0.03) & \textbf{1.93} (0.02) & 5.18 (0.64) \\
\end{longtable}
\endgroup

\begingroup
\scriptsize
\setlength{\tabcolsep}{4.2pt}
\renewcommand{\arraystretch}{1.08}
\begin{longtable}{r l c r r r r r}
\caption{Comparison for the asymmetric unimodal latent densities using the skew-normal sieve, in terms of $10^3\times\mathrm{MISE}$ (SE). Row minima for MISE are shown in bold.}\label{tab:skew-asym-full}\\
\toprule
$n$ & Error & $c$ & $\mathrm{MIX}^{SN}$ & $\mathrm{POST}^{SN}$ & DEC & PC & QP \\
\midrule
\endfirsthead
\multicolumn{8}{c}{\tablename\ \thetable\ -- continued from previous page}\\
\toprule
$n$ & Error & $c$ & $\mathrm{MIX}^{SN}$ & $\mathrm{POST}^{SN}$ & DEC & PC & QP \\
\midrule
\endhead
\midrule\multicolumn{8}{r}{Continued on next page}\\
\endfoot
\bottomrule
\endlastfoot
\multicolumn{8}{l}{\textit{Gamma}}\\
200 & Gaussian & 0.35 & \textbf{3.44} (0.13) & 4.67 (0.15) & 7.03 (0.19) & 6.18 (0.20) & 14.22 (1.29) \\
 &  & 0.60 & \textbf{5.16} (0.21) & 5.32 (0.20) & 13.52 (0.34) & 25.96 (0.97) & 17.35 (1.28) \\
 & Laplace & 0.35 & 8.48 (0.29) & \textbf{5.98} (0.18) & 6.70 (0.17) & 5.98 (0.17) & 13.22 (1.00) \\
 &  & 0.60 & 8.02 (0.29) & \textbf{6.23} (0.22) & 10.39 (0.23) & 11.03 (0.36) & 20.09 (1.51) \\
500 & Gaussian & 0.35 & \textbf{1.54} (0.05) & 1.94 (0.06) & 4.52 (0.12) & 3.06 (0.09) & 7.55 (0.88) \\
 &  & 0.60 & \textbf{2.23} (0.08) & 2.42 (0.09) & 9.31 (0.16) & 9.44 (0.35) & 11.25 (1.14) \\
 & Laplace & 0.35 & 4.76 (0.16) & 3.43 (0.12) & 3.73 (0.09) & \textbf{2.63} (0.07) & 7.26 (0.77) \\
 &  & 0.60 & 3.98 (0.16) & \textbf{3.01} (0.11) & 6.15 (0.13) & 5.10 (0.16) & 9.19 (0.84) \\
1000 & Gaussian & 0.35 & \textbf{1.08} (0.04) & 1.20 (0.04) & 3.01 (0.06) & 1.74 (0.04) & 6.19 (0.88) \\
 &  & 0.60 & \textbf{1.38} (0.05) & 1.43 (0.05) & 8.37 (0.12) & 5.22 (0.18) & 8.29 (1.03) \\
 & Laplace & 0.35 & 2.75 (0.07) & 2.07 (0.06) & 2.73 (0.07) & \textbf{1.67} (0.04) & 4.80 (0.63) \\
 &  & 0.60 & 2.53 (0.10) & \textbf{1.79} (0.06) & 4.22 (0.10) & 2.85 (0.08) & 7.68 (0.90) \\
5000 & Gaussian & 0.35 & 0.65 (0.02) & \textbf{0.46} (0.01) & 1.47 (0.02) & 0.77 (0.01) & 3.19 (0.64) \\
 &  & 0.60 & 0.86 (0.03) & \textbf{0.77} (0.03) & 4.62 (0.05) & 1.47 (0.04) & 4.89 (0.62) \\
 & Laplace & 0.35 & 1.29 (0.04) & 0.93 (0.03) & 0.95 (0.02) & \textbf{0.74} (0.01) & 1.97 (0.37) \\
 &  & 0.60 & 1.41 (0.04) & 1.19 (0.04) & 1.78 (0.03) & \textbf{1.00} (0.02) & 2.84 (0.35) \\
\addlinespace[0.35em]
\multicolumn{8}{l}{\textit{Comte $\chi^2$}}\\
200 & Gaussian & 0.35 & 18.65 (0.54) & \textbf{13.53} (0.53) & 33.68 (0.38) & 29.33 (0.20) & 35.17 (1.39) \\
 &  & 0.60 & 29.11 (0.60) & \textbf{24.10} (0.59) & 49.47 (0.54) & 47.64 (0.95) & 49.52 (1.57) \\
 & Laplace & 0.35 & 24.72 (0.62) & \textbf{19.15} (0.45) & 29.38 (0.37) & 28.67 (0.18) & 31.01 (1.34) \\
 &  & 0.60 & 32.13 (0.73) & \textbf{25.83} (0.62) & 39.52 (0.53) & 34.49 (0.40) & 40.02 (1.38) \\
500 & Gaussian & 0.35 & 12.69 (0.25) & \textbf{8.25} (0.22) & 25.73 (0.20) & 25.88 (0.09) & 25.62 (1.12) \\
 &  & 0.60 & 21.37 (0.47) & \textbf{17.37} (0.47) & 46.30 (0.37) & 32.76 (0.35) & 38.23 (1.19) \\
 & Laplace & 0.35 & 15.64 (0.45) & \textbf{13.16} (0.37) & 21.06 (0.29) & 25.84 (0.08) & 20.72 (1.02) \\
 &  & 0.60 & 22.29 (0.46) & \textbf{18.53} (0.43) & 30.07 (0.38) & 27.88 (0.15) & 32.29 (1.39) \\
1000 & Gaussian & 0.35 & 9.26 (0.26) & \textbf{6.55} (0.23) & 21.49 (0.27) & 24.86 (0.05) & 23.86 (1.37) \\
 &  & 0.60 & 18.51 (0.45) & \textbf{15.43} (0.45) & 36.80 (0.42) & 28.30 (0.20) & 31.22 (1.18) \\
 & Laplace & 0.35 & 11.09 (0.32) & \textbf{9.35} (0.25) & 17.24 (0.12) & 24.69 (0.04) & 16.66 (0.93) \\
 &  & 0.60 & 18.77 (0.38) & \textbf{16.46} (0.36) & 25.76 (0.19) & 25.77 (0.08) & 24.08 (1.04) \\
5000 & Gaussian & 0.35 & 2.12 (0.04) & \textbf{2.02} (0.04) & 16.62 (0.08) & 23.94 (0.01) & 14.13 (0.80) \\
 &  & 0.60 & 5.62 (0.20) & \textbf{5.46} (0.20) & 33.54 (0.17) & 24.67 (0.06) & 20.76 (0.68) \\
 & Laplace & 0.35 & 7.61 (0.18) & \textbf{6.91} (0.16) & 8.60 (0.09) & 21.27 (0.27) & 9.40 (0.69) \\
 &  & 0.60 & 6.64 (0.17) & \textbf{6.40} (0.16) & 16.80 (0.10) & 24.15 (0.03) & 15.52 (0.88) \\
\addlinespace[0.35em]
\multicolumn{8}{l}{\textit{Cai $\chi^2$}}\\
200 & Gaussian & 0.35 & 7.67 (0.30) & \textbf{6.95} (0.29) & 18.79 (0.26) & 14.39 (0.20) & 24.06 (1.46) \\
 &  & 0.60 & 13.83 (0.41) & \textbf{11.65} (0.41) & 29.69 (0.62) & 31.67 (1.18) & 34.13 (2.11) \\
 & Laplace & 0.35 & 14.48 (0.55) & \textbf{10.71} (0.37) & 16.92 (0.28) & 13.93 (0.18) & 23.65 (1.51) \\
 &  & 0.60 & 16.80 (0.50) & \textbf{12.61} (0.38) & 23.96 (0.40) & 19.56 (0.39) & 26.02 (1.21) \\
500 & Gaussian & 0.35 & 4.37 (0.11) & \textbf{3.11} (0.10) & 12.71 (0.15) & 11.11 (0.08) & 17.35 (1.00) \\
 &  & 0.60 & 7.20 (0.19) & \textbf{5.54} (0.18) & 25.97 (0.24) & 17.81 (0.35) & 28.25 (1.39) \\
 & Laplace & 0.35 & 10.21 (0.29) & \textbf{7.85} (0.23) & 11.47 (0.18) & 10.98 (0.09) & 12.93 (0.74) \\
 &  & 0.60 & 9.67 (0.32) & \textbf{7.35} (0.25) & 17.57 (0.25) & 13.11 (0.16) & 19.02 (1.00) \\
1000 & Gaussian & 0.35 & 3.57 (0.10) & \textbf{2.27} (0.09) & 11.90 (0.13) & 10.13 (0.05) & 14.09 (1.18) \\
 &  & 0.60 & 5.06 (0.13) & \textbf{3.66} (0.12) & 20.93 (0.31) & 13.30 (0.17) & 23.38 (1.42) \\
 & Laplace & 0.35 & 7.14 (0.19) & \textbf{5.50} (0.14) & 7.98 (0.09) & 10.04 (0.05) & 9.34 (0.58) \\
 &  & 0.60 & 7.62 (0.23) & \textbf{6.13} (0.21) & 12.82 (0.16) & 11.16 (0.10) & 15.83 (0.93) \\
5000 & Gaussian & 0.35 & 1.39 (0.03) & \textbf{1.36} (0.03) & 7.08 (0.05) & 9.23 (0.02) & 7.32 (0.77) \\
 &  & 0.60 & 2.82 (0.11) & \textbf{2.49} (0.10) & 17.09 (0.11) & 9.86 (0.05) & 11.20 (0.54) \\
 & Laplace & 0.35 & 4.31 (0.09) & \textbf{3.68} (0.08) & 4.58 (0.04) & 9.16 (0.01) & 5.04 (0.51) \\
 &  & 0.60 & 3.70 (0.11) & \textbf{3.39} (0.10) & 7.47 (0.07) & 9.43 (0.03) & 8.19 (0.69) \\
\addlinespace[0.35em]
\multicolumn{8}{l}{\textit{Beta}}\\
200 & Gaussian & 0.35 & 6.37 (0.20) & \textbf{5.91} (0.16) & 7.48 (0.17) & 7.24 (0.21) & 14.57 (1.22) \\
 &  & 0.60 & 9.61 (0.30) & \textbf{8.43} (0.24) & 12.64 (0.24) & 23.01 (0.77) & 17.04 (1.16) \\
 & Laplace & 0.35 & 12.72 (0.28) & 7.68 (0.20) & 7.17 (0.16) & \textbf{7.04} (0.21) & 12.53 (0.93) \\
 &  & 0.60 & 10.61 (0.28) & \textbf{8.09} (0.23) & 10.23 (0.20) & 12.56 (0.40) & 17.90 (1.24) \\
500 & Gaussian & 0.35 & 4.30 (0.09) & \textbf{3.41} (0.08) & 5.44 (0.10) & 3.98 (0.08) & 10.08 (1.04) \\
 &  & 0.60 & 5.66 (0.13) & \textbf{5.00} (0.12) & 9.02 (0.14) & 10.90 (0.34) & 15.09 (1.17) \\
 & Laplace & 0.35 & 8.10 (0.22) & 5.38 (0.15) & 4.35 (0.08) & \textbf{3.73} (0.07) & 8.09 (0.78) \\
 &  & 0.60 & 8.07 (0.20) & \textbf{5.49} (0.14) & 6.55 (0.13) & 6.30 (0.18) & 11.44 (0.96) \\
1000 & Gaussian & 0.35 & 3.46 (0.06) & \textbf{2.29} (0.05) & 3.63 (0.05) & 2.70 (0.05) & 7.17 (0.69) \\
 &  & 0.60 & 4.38 (0.09) & \textbf{3.76} (0.08) & 8.03 (0.10) & 6.09 (0.17) & 13.17 (1.26) \\
 & Laplace & 0.35 & 4.51 (0.10) & 3.57 (0.09) & 3.49 (0.05) & \textbf{2.62} (0.04) & 6.55 (0.62) \\
 &  & 0.60 & 6.17 (0.14) & 4.36 (0.10) & 4.99 (0.09) & \textbf{3.71} (0.08) & 9.55 (0.79) \\
5000 & Gaussian & 0.35 & 2.44 (0.05) & 1.80 (0.04) & 1.96 (0.02) & \textbf{1.77} (0.01) & 4.02 (0.52) \\
 &  & 0.60 & 3.62 (0.07) & 3.00 (0.07) & 4.98 (0.05) & \textbf{2.39} (0.04) & 6.33 (0.62) \\
 & Laplace & 0.35 & 2.41 (0.05) & 1.76 (0.04) & \textbf{1.32} (0.02) & 1.73 (0.01) & 3.00 (0.38) \\
 &  & 0.60 & 2.95 (0.07) & 2.69 (0.06) & 2.22 (0.03) & \textbf{1.95} (0.02) & 4.23 (0.65) \\
\end{longtable}
\endgroup

\begingroup
\scriptsize
\setlength{\tabcolsep}{4.0pt}
\renewcommand{\arraystretch}{1.05}
\begin{longtable}{rlcccccc}
\caption{Comparison for the bimodal latent densities in terms of \(10^3\times\mathrm{MISE}\) (SE). Row minima for MISE are shown in bold.}\label{tab:bimodal-mise-se}\\
\toprule
$n$ & Error & $c$ & $\mathrm{MIX}$ & ${\mathrm{POST}}$ & DEC & PC & QP \\
\midrule
\endfirsthead
\multicolumn{8}{c}{\tablename\ \thetable\ -- continued from previous page}\\
\toprule
$n$ & Error & $c$ & $\mathrm{MIX}$ & ${\mathrm{POST}}$ & DEC & PC & QP \\
\midrule
\endhead
\midrule\multicolumn{8}{r}{Continued on next page}\\
\endfoot
\bottomrule
\endlastfoot
\multicolumn{8}{l}{\textit{Comte Gamma mixture}} \\
\multirow{4}{*}{200} & \multirow{2}{*}{Gaussian} & 0.35 & 17.90 (0.76) & 14.83 (0.81) & 7.91 (0.09) & \textbf{7.90} (0.10) & 12.13 (0.57) \\
 &  & 0.60 & 15.68 (0.65) & 13.69 (0.67) & \textbf{11.22} (0.10) & 16.92 (0.46) & 18.04 (0.68) \\
 & \multirow{2}{*}{Laplace} & 0.35 & 16.62 (0.86) & 13.76 (0.91) & \textbf{7.24} (0.08) & 7.61 (0.09) & 10.67 (0.47) \\
 &  & 0.60 & 16.95 (0.92) & 14.17 (0.95) & \textbf{9.33} (0.08) & 10.38 (0.18) & 15.28 (0.57) \\
\multirow{4}{*}{500} & \multirow{2}{*}{Gaussian} & 0.35 & 6.46 (0.31) & 6.16 (0.30) & \textbf{5.91} (0.07) & 6.29 (0.04) & 7.30 (0.49) \\
 &  & 0.60 & 14.06 (0.27) & 12.27 (0.29) & \textbf{8.72} (0.07) & 9.51 (0.17) & 11.01 (0.52) \\
 & \multirow{2}{*}{Laplace} & 0.35 & 5.11 (0.30) & \textbf{4.86} (0.29) & 4.93 (0.06) & 6.17 (0.04) & 5.41 (0.31) \\
 &  & 0.60 & 13.47 (0.46) & 11.38 (0.47) & \textbf{6.77} (0.07) & 7.08 (0.07) & 8.31 (0.47) \\
\multirow{4}{*}{1000} & \multirow{2}{*}{Gaussian} & 0.35 & 3.02 (0.14) & \textbf{2.98} (0.14) & 4.37 (0.04) & 5.66 (0.02) & 4.79 (0.38) \\
 &  & 0.60 & 10.13 (0.17) & 9.08 (0.15) & 8.43 (0.05) & \textbf{7.40} (0.08) & 9.06 (0.47) \\
 & \multirow{2}{*}{Laplace} & 0.35 & \textbf{2.49} (0.11) & 2.50 (0.11) & 3.99 (0.05) & 5.64 (0.02) & 4.00 (0.28) \\
 &  & 0.60 & 7.43 (0.32) & 6.67 (0.31) & \textbf{5.89} (0.07) & 6.19 (0.04) & 6.33 (0.44) \\
\multirow{4}{*}{5000} & \multirow{2}{*}{Gaussian} & 0.35 & 1.37 (0.03) & \textbf{1.27} (0.03) & 2.49 (0.02) & 5.23 (0.01) & 3.50 (0.45) \\
 &  & 0.60 & 4.05 (0.09) & \textbf{3.85} (0.08) & 6.13 (0.05) & 5.58 (0.02) & 5.49 (0.34) \\
 & \multirow{2}{*}{Laplace} & 0.35 & 0.95 (0.02) & \textbf{0.85} (0.02) & 1.57 (0.02) & 5.22 (0.01) & 1.31 (0.13) \\
 &  & 0.60 & 1.60 (0.04) & \textbf{1.51} (0.04) & 2.66 (0.04) & 5.32 (0.01) & 2.89 (0.22) \\
\addlinespace[2pt]
\multicolumn{8}{l}{\textit{Cai Gamma mixture}} \\
\multirow{4}{*}{200} & \multirow{2}{*}{Gaussian} & 0.35 & 33.06 (0.89) & 26.27 (1.01) & 17.11 (0.18) & \textbf{16.85} (0.23) & 20.40 (0.97) \\
 &  & 0.60 & 35.31 (1.77) & 30.96 (1.81) & \textbf{23.68} (0.20) & 33.10 (0.84) & 30.79 (1.20) \\
 & \multirow{2}{*}{Laplace} & 0.35 & 28.99 (0.80) & 22.85 (0.88) & \textbf{15.84} (0.17) & 16.25 (0.20) & 19.35 (1.19) \\
 &  & 0.60 & 33.54 (1.07) & 27.54 (1.14) & \textbf{19.71} (0.16) & 21.41 (0.39) & 25.71 (1.13) \\
\multirow{4}{*}{500} & \multirow{2}{*}{Gaussian} & 0.35 & 14.03 (0.66) & 13.38 (0.65) & \textbf{12.71} (0.14) & 13.03 (0.09) & 14.98 (0.99) \\
 &  & 0.60 & 30.79 (0.65) & 27.14 (0.69) & \textbf{18.74} (0.14) & 19.62 (0.36) & 24.73 (1.14) \\
 & \multirow{2}{*}{Laplace} & 0.35 & 10.64 (0.51) & \textbf{10.22} (0.48) & 10.67 (0.13) & 12.90 (0.08) & 11.60 (0.72) \\
 &  & 0.60 & 27.71 (0.63) & 23.09 (0.67) & \textbf{14.80} (0.15) & 14.99 (0.16) & 17.26 (0.79) \\
\multirow{4}{*}{1000} & \multirow{2}{*}{Gaussian} & 0.35 & 6.78 (0.30) & \textbf{6.70} (0.31) & 9.57 (0.08) & 11.83 (0.05) & 11.95 (1.03) \\
 &  & 0.60 & 20.76 (0.43) & 18.80 (0.39) & 17.76 (0.10) & \textbf{14.76} (0.17) & 20.45 (0.96) \\
 & \multirow{2}{*}{Laplace} & 0.35 & 5.19 (0.25) & \textbf{5.16} (0.25) & 8.47 (0.11) & 11.75 (0.05) & 7.97 (0.53) \\
 &  & 0.60 & 15.17 (0.56) & 13.64 (0.52) & \textbf{12.53} (0.14) & 12.96 (0.09) & 13.68 (0.68) \\
\multirow{4}{*}{5000} & \multirow{2}{*}{Gaussian} & 0.35 & 2.80 (0.06) & \textbf{2.60} (0.06) & 5.40 (0.04) & 10.84 (0.01) & 7.73 (0.85) \\
 &  & 0.60 & 7.47 (0.18) & \textbf{7.15} (0.18) & 12.98 (0.10) & 11.48 (0.05) & 12.49 (0.75) \\
 & \multirow{2}{*}{Laplace} & 0.35 & 2.12 (0.04) & \textbf{1.91} (0.04) & 3.31 (0.03) & 10.82 (0.01) & 3.56 (0.43) \\
 &  & 0.60 & 3.10 (0.11) & \textbf{2.98} (0.11) & 5.69 (0.07) & 11.06 (0.03) & 6.97 (0.58) \\
\addlinespace[2pt]
\multicolumn{8}{l}{\textit{Normal mixture G}} \\
\multirow{4}{*}{200} & \multirow{2}{*}{Gaussian} & 0.35 & 8.92 (0.52) & \textbf{8.87} (0.50) & 18.11 (0.18) & 15.13 (0.11) & 11.17 (0.48) \\
 &  & 0.60 & 54.34 (1.66) & 48.34 (1.71) & 31.31 (0.31) & 23.58 (0.45) & \textbf{18.38} (0.69) \\
 & \multirow{2}{*}{Laplace} & 0.35 & \textbf{8.17} (0.82) & 8.44 (0.82) & 15.24 (0.24) & 15.09 (0.10) & 12.00 (0.51) \\
 &  & 0.60 & 38.18 (2.11) & 34.29 (2.10) & 23.55 (0.31) & 17.31 (0.17) & \textbf{15.29} (0.51) \\
\multirow{4}{*}{500} & \multirow{2}{*}{Gaussian} & 0.35 & \textbf{2.98} (0.10) & 3.03 (0.10) & 11.61 (0.11) & 13.62 (0.05) & 6.27 (0.42) \\
 &  & 0.60 & 17.34 (0.78) & 16.66 (0.72) & 26.84 (0.18) & 16.98 (0.19) & \textbf{11.23} (0.49) \\
 & \multirow{2}{*}{Laplace} & 0.35 & \textbf{2.43} (0.09) & 2.57 (0.09) & 7.84 (0.17) & 13.56 (0.05) & 5.92 (0.37) \\
 &  & 0.60 & 8.43 (0.67) & \textbf{8.31} (0.65) & 16.04 (0.26) & 14.74 (0.09) & 8.77 (0.43) \\
\multirow{4}{*}{1000} & \multirow{2}{*}{Gaussian} & 0.35 & \textbf{1.42} (0.05) & 1.44 (0.05) & 7.98 (0.16) & 13.04 (0.03) & 3.21 (0.23) \\
 &  & 0.60 & \textbf{6.58} (0.29) & 6.59 (0.29) & 22.91 (0.30) & 14.75 (0.12) & 7.90 (0.37) \\
 & \multirow{2}{*}{Laplace} & 0.35 & \textbf{1.13} (0.04) & 1.20 (0.04) & 6.01 (0.07) & 12.99 (0.02) & 3.82 (0.31) \\
 &  & 0.60 & \textbf{3.22} (0.13) & 3.23 (0.13) & 11.42 (0.12) & 13.57 (0.05) & 4.92 (0.30) \\
\multirow{4}{*}{5000} & \multirow{2}{*}{Gaussian} & 0.35 & 0.29 (0.01) & \textbf{0.29} (0.01) & 5.59 (0.04) & 12.64 (0.01) & 1.71 (0.20) \\
 &  & 0.60 & 2.04 (0.06) & \textbf{2.02} (0.06) & 17.27 (0.10) & 12.89 (0.04) & 3.47 (0.23) \\
 & \multirow{2}{*}{Laplace} & 0.35 & \textbf{0.24} (0.01) & 0.25 (0.01) & 1.75 (0.02) & 0.73 (0.02) & 1.57 (0.18) \\
 &  & 0.60 & 0.78 (0.03) & \textbf{0.77} (0.03) & 4.65 (0.09) & 12.71 (0.02) & 2.25 (0.18) \\
\addlinespace[2pt]
\multicolumn{8}{l}{\textit{Normal mixture Y}} \\
\multirow{4}{*}{200} & \multirow{2}{*}{Gaussian} & 0.35 & 12.45 (0.81) & 12.38 (0.80) & 19.44 (0.41) & \textbf{11.40} (0.19) & 16.00 (1.20) \\
 &  & 0.60 & 62.99 (3.41) & 56.99 (3.40) & 39.98 (0.64) & 30.63 (0.91) & \textbf{26.41} (1.33) \\
 & \multirow{2}{*}{Laplace} & 0.35 & 11.46 (0.86) & 11.93 (0.84) & 15.51 (0.39) & \textbf{11.18} (0.18) & 15.75 (1.29) \\
 &  & 0.60 & 47.00 (2.82) & 42.23 (2.76) & 26.69 (0.60) & \textbf{17.59} (0.40) & 21.11 (1.25) \\
\multirow{4}{*}{500} & \multirow{2}{*}{Gaussian} & 0.35 & 4.44 (0.22) & \textbf{4.34} (0.21) & 12.59 (0.20) & 8.24 (0.09) & 11.90 (1.30) \\
 &  & 0.60 & 19.49 (1.31) & 19.34 (1.31) & 34.85 (0.44) & \textbf{15.79} (0.36) & 17.91 (1.01) \\
 & \multirow{2}{*}{Laplace} & 0.35 & \textbf{3.78} (0.18) & 4.03 (0.17) & 8.46 (0.22) & 8.15 (0.08) & 9.90 (0.87) \\
 &  & 0.60 & 10.48 (0.74) & 10.38 (0.73) & 16.10 (0.40) & \textbf{10.33} (0.15) & 14.83 (1.12) \\
\multirow{4}{*}{1000} & \multirow{2}{*}{Gaussian} & 0.35 & \textbf{2.12} (0.13) & 2.13 (0.12) & 8.76 (0.20) & 7.22 (0.05) & 8.39 (1.00) \\
 &  & 0.60 & 9.40 (0.49) & \textbf{9.33} (0.48) & 22.68 (0.38) & 10.58 (0.18) & 12.24 (1.00) \\
 & \multirow{2}{*}{Laplace} & 0.35 & \textbf{1.67} (0.08) & 1.87 (0.07) & 6.40 (0.12) & 7.10 (0.04) & 6.76 (0.71) \\
 &  & 0.60 & 4.51 (0.21) & \textbf{4.47} (0.20) & 12.95 (0.22) & 8.39 (0.08) & 10.92 (0.89) \\
\multirow{4}{*}{5000} & \multirow{2}{*}{Gaussian} & 0.35 & \textbf{0.34} (0.01) & 0.35 (0.01) & 5.80 (0.06) & 6.30 (0.01) & 5.12 (0.90) \\
 &  & 0.60 & 1.88 (0.10) & \textbf{1.83} (0.09) & 19.95 (0.15) & 6.93 (0.04) & 6.19 (0.69) \\
 & \multirow{2}{*}{Laplace} & 0.35 & \textbf{0.31} (0.01) & 0.35 (0.01) & 2.01 (0.04) & 6.25 (0.01) & 2.32 (0.34) \\
 &  & 0.60 & 0.84 (0.04) & \textbf{0.83} (0.04) & 5.55 (0.10) & 6.49 (0.02) & 3.80 (0.46) \\
\end{longtable}
\endgroup

\scriptsize
\setlength{\tabcolsep}{2.4pt}
\renewcommand{\arraystretch}{1.10}
\setlength{\LTleft}{\fill}
\setlength{\LTright}{\fill}
\begin{longtable}{@{}llr*{7}{c}@{}}
\caption{Pairwise comparisons between estimators across symmetric-unimodal,
asymmetric-unimodal, and bimodal simulation scenarios using paired two-sided
\(t\)-tests on ISEs at the 5\% level. Each cell reports first/second/n.s.,
namely the number of scenarios in which the first estimator has significantly
lower ISE, the second estimator has significantly lower ISE, or the difference
is not statistically significant.}
\label{tab:pairwise-all-expanded}
\\

\toprule
Group & Outcome & No. &
MIX/POST & MIX/DEC & MIX/PC & MIX/QP &
POST/DEC & POST/PC & POST/QP \\
\midrule
\endfirsthead
\multicolumn{10}{c}{\tablename\ \thetable\ -- continued from previous page}\\
\toprule
Group & Outcome & No. &
MIX/POST & MIX/DEC & MIX/PC & MIX/QP &
POST/DEC & POST/PC & POST/QP \\
\midrule
\endhead
\midrule\multicolumn{10}{r}{Continued on next page}\\
\endfoot
\bottomrule
\endlastfoot

\multicolumn{10}{@{}l}{
\textbf{Panel A. Symmetric-unimodal}
}\\*
\addlinespace[0.20em]
Overall
& All scenarios
& 80
& 27/52/1 & 74/3/3 & 59/15/6 & 67/6/7 & 75/0/5 & 73/0/7 & 78/0/2
\\

\addlinespace[0.30em]

Latent density
& Standard normal
& 16
& 16/0/0 & 16/0/0 & 16/0/0 & 16/0/0 & 16/0/0 & 16/0/0 & 16/0/0
\\

& $t_5$
& 16
& 0/15/1 & 15/0/1 & 10/2/4 & 16/0/0 & 15/0/1 & 14/0/2 & 16/0/0
\\

& Laplace
& 16
& 0/16/0 & 11/3/2 & 12/4/0 & 11/2/3 & 12/0/4 & 12/0/4 & 14/0/2
\\

& Cauchy
& 16
& 0/16/0 & 16/0/0 & 7/7/2 & 8/4/4 & 16/0/0 & 16/0/0 & 16/0/0
\\

& Nearly Normal
& 16
& 11/5/0 & 16/0/0 & 14/2/0 & 16/0/0 & 16/0/0 & 15/0/1 & 16/0/0
\\

\addlinespace[0.30em]

Error law
& Gaussian
& 40
& 13/27/0 & 38/1/1 & 31/7/2 & 33/3/4 & 38/0/2 & 39/0/1 & 39/0/1
\\

& Laplace
& 40
& 14/25/1 & 36/2/2 & 28/8/4 & 34/3/3 & 37/0/3 & 34/0/6 & 39/0/1
\\

\addlinespace[0.30em]

Error level
& 0.35
& 40
& 14/25/1 & 37/2/1 & 26/11/3 & 32/2/6 & 37/0/3 & 35/0/5 & 40/0/0
\\

& 0.60
& 40
& 13/27/0 & 37/1/2 & 33/4/3 & 35/4/1 & 38/0/2 & 38/0/2 & 38/0/2
\\

\addlinespace[0.30em]

Sample size
& 200
& 20
& 8/12/0 & 15/3/2 & 12/7/1 & 12/6/2 & 15/0/5 & 16/0/4 & 18/0/2
\\

& 500
& 20
& 8/12/0 & 19/0/1 & 15/2/3 & 20/0/0 & 20/0/0 & 18/0/2 & 20/0/0
\\

& 1000
& 20
& 7/12/1 & 20/0/0 & 17/2/1 & 18/0/2 & 20/0/0 & 20/0/0 & 20/0/0
\\

& 5000
& 20
& 4/16/0 & 20/0/0 & 15/4/1 & 17/0/3 & 20/0/0 & 19/0/1 & 20/0/0
\\

\addlinespace[0.65em]
\midrule
\addlinespace[0.35em]

\multicolumn{10}{@{}l}{
\textbf{Panel B. Asymmetric-unimodal latent densities}
}\\*
\addlinespace[0.20em]
Overall
& All scenarios
& 64
& 0/64/0 & 4/59/1 & 5/57/2 & 5/38/21 & 4/52/8 & 6/54/4 & 17/26/21
\\

\addlinespace[0.30em]

Latent density
& Gamma
& 16
& 0/16/0 & 2/14/0 & 0/15/1 & 4/1/11 & 2/10/4 & 1/14/1 & 10/0/6
\\

& Comte chi-square
& 16
& 0/16/0 & 1/14/1 & 4/12/0 & 0/16/0 & 1/14/1 & 4/11/1 & 0/16/0
\\

& Cai chi-square
& 16
& 0/16/0 & 1/15/0 & 1/15/0 & 0/15/1 & 1/14/1 & 1/14/1 & 0/9/7
\\

& Beta
& 16
& 0/16/0 & 0/16/0 & 0/15/1 & 1/6/9 & 0/14/2 & 0/15/1 & 7/1/8
\\

\addlinespace[0.30em]

Error law
& Gaussian
& 32
& 0/32/0 & 4/27/1 & 1/29/2 & 4/19/9 & 4/20/8 & 2/28/2 & 8/12/12
\\

& Laplace
& 32
& 0/32/0 & 0/32/0 & 4/28/0 & 1/19/12 & 0/32/0 & 4/26/2 & 9/14/9
\\

\addlinespace[0.30em]

Error level
& 0.35
& 32
& 0/32/0 & 0/32/0 & 4/28/0 & 3/18/11 & 0/29/3 & 4/27/1 & 12/11/9
\\

& 0.60
& 32
& 0/32/0 & 4/27/1 & 1/29/2 & 2/20/10 & 4/23/5 & 2/27/3 & 5/15/12
\\

\addlinespace[0.30em]

Sample size
& 200
& 16
& 0/16/0 & 0/16/0 & 0/14/2 & 0/12/4 & 0/16/0 & 1/14/1 & 4/6/6
\\

& 500
& 16
& 0/16/0 & 0/15/1 & 0/16/0 & 2/10/4 & 0/12/4 & 0/14/2 & 3/8/5
\\

& 1000
& 16
& 0/16/0 & 1/15/0 & 1/15/0 & 2/7/7 & 1/13/2 & 1/15/0 & 6/5/5
\\

& 5000
& 16
& 0/16/0 & 3/13/0 & 4/12/0 & 1/9/6 & 3/11/2 & 4/11/1 & 4/7/5
\\

\addlinespace[0.65em]
\midrule
\addlinespace[0.35em]

\multicolumn{10}{@{}l}{
\textbf{Panel C. Bimodal latent densities}
}\\*
\addlinespace[0.20em]
Overall
& All scenarios
& 64
& 12/44/8 & 40/20/4 & 37/19/8 & 38/18/8 & 40/20/4 & 38/17/9 & 39/12/13
\\

\addlinespace[0.30em]

Latent density
& Comte mixed gamma
& 16
& 0/15/1 & 6/8/2 & 7/7/2 & 7/6/3 & 6/8/2 & 8/6/2 & 8/4/4
\\

& Cai mixed gamma
& 16
& 0/15/1 & 6/8/2 & 7/7/2 & 6/7/3 & 6/8/2 & 7/6/3 & 6/3/7
\\

& Mixed normal G
& 16
& 7/5/4 & 14/2/0 & 13/2/1 & 12/3/1 & 14/2/0 & 13/2/1 & 12/3/1
\\

& Mixed normal Y
& 16
& 5/9/2 & 14/2/0 & 10/3/3 & 13/2/1 & 14/2/0 & 10/3/3 & 13/2/1
\\

\addlinespace[0.30em]

Error law
& Gaussian
& 32
& 4/24/4 & 20/10/2 & 17/9/6 & 19/9/4 & 20/10/2 & 18/9/5 & 20/6/6
\\

& Laplace
& 32
& 8/20/4 & 20/10/2 & 20/10/2 & 19/9/4 & 20/10/2 & 20/8/4 & 19/6/7
\\

\addlinespace[0.30em]

Error level
& 0.35
& 32
& 11/15/6 & 24/4/4 & 24/4/4 & 24/4/4 & 24/4/4 & 24/4/4 & 25/4/3
\\

& 0.60
& 32
& 1/29/2 & 16/16/0 & 13/15/4 & 14/14/4 & 16/16/0 & 14/13/5 & 14/8/10
\\

\addlinespace[0.30em]

Sample size
& 200
& 16
& 2/12/2 & 4/12/0 & 2/10/4 & 5/10/1 & 4/12/0 & 3/10/3 & 5/8/3
\\

& 500
& 16
& 3/12/1 & 8/4/4 & 7/5/4 & 5/5/6 & 8/4/4 & 7/5/4 & 6/4/6
\\

& 1000
& 16
& 4/8/4 & 12/4/0 & 12/4/0 & 12/3/1 & 12/4/0 & 12/2/2 & 12/0/4
\\

& 5000
& 16
& 3/12/1 & 16/0/0 & 16/0/0 & 16/0/0 & 16/0/0 & 16/0/0 & 16/0/0
\\
\end{longtable}

\normalsize

\renewcommand{\bibfont}{\fontsize{7.5}{8.5}\selectfont}